\theoremstyle{plain}
\newtheorem{lem}{Lemma}[section]
\newtheorem{thm}[lem]{Theorem}
\newtheorem{prop}[lem]{Proposition}
\theoremstyle{definition}
\newtheorem{defn}[lem]{Definition}
\newtheorem{rem}[lem]{Remark}
\newtheorem{hyp}[lem]{Hypothesis}
\newcommand{\NN}{\mathbb N}
\newcommand{\ZZ}{\mathbb Z}
\newcommand{\RR}{\mathbb R}
\newcommand{\R}{\mathbb R}
\newcommand{\EE}{\mathbb E}
\newcommand{\sA}{\mathcal{A}}
\newcommand{\sE}{\mathcal{E}}
\newcommand{\sG}{\mathcal{G}}
\newcommand{\sB}{\mathcal{B}}
\newcommand{\oF}{\textup{\textbf{F}}}
\newcommand{\oJ}{\textup{\textbf{J}}}
\newcommand{\intd}[1]{\, \mathrm{d}#1}
\numberwithin{equation}{section}
\def\nicefrac#1#2{%
    \raise.5ex\hbox{$#1$}%
    \kern-.15em/\kern-.05em%
    \lower.25ex\hbox{$#2$}}
\begin{document}

\title[A dynamic approach to heterogeneous elastic wires]{A dynamic approach \\ to heterogeneous elastic wires}

\keywords{Euler--Bernoulli bending energy, Canham--Helfrich functional, heterogeneous material, elastic flow, time-weighted Hölder spaces, {\L}ojasiewicz--Simon inequality}
\subjclass[2020]{35B40 (primary), 35K45, 53E10 (secondary)}

\author[A.~Dall'Acqua]{Anna Dall'Acqua}
\address[A.~Dall'Acqua]{Institute of Applied Analysis, Helmholtzstra\ss e 18, 89081 Ulm, Germany.}
\email{anna.dallacqua@uni-ulm.de}
\author[L.~Langer]{Leonie Langer}
\address[L.~Langer]{Institute of Applied Analysis, Helmholtzstra\ss e 18, 89081 Ulm, Germany.}
\email{leonie.langer@uni-ulm.de}
\author[F.~Rupp]{Fabian Rupp}
\address[F.~Rupp]{Faculty of Mathematics, University of Vienna, Oskar-Morgenstern-Platz 1, 1090 Vienna, Austria.}
\email{fabian.rupp@univie.ac.at}

\begin{abstract}
We consider closed planar curves with fixed length and arbitrary winding number whose elastic energy depends on an additional density variable and a spontaneous curvature. Working with the inclination angle, the associated $L^2$-gradient flow is a nonlocal quasilinear coupled parabolic system of second order.  
We show local well-posedness, global existence of solutions, and full convergence of the flow for initial data in a weak regularity class.
\end{abstract}

\maketitle



\section{Introduction and main results}

In this work we consider a sufficiently regular closed planar curve $\gamma$ with density $\rho$ describing a heterogeneous elastic wire. The density $\rho$ can be used to model 
a distribution of mass or temperature, for instance. We assume that the bending stiffness of the wire depends on the density and consider the energy
\begin{align}
\label{eq:sE}
\mathcal{E}_\mu(\gamma,\rho)=\frac{1}{2}\int_\gamma\left(\beta(\rho)(\kappa-c_0)^2+\mu\left(\partial_s \rho\right)^2\right)\intd s,
\end{align}
with $\kappa$ the curvature of $\gamma$, the arc-length element $\intd s= |\partial_x\gamma(x)| \intd x$ and $\partial_s = \frac{1}{|\partial_x \gamma|} \partial_x$. Here $\mu>0$ models the diffusivity of the density, the smooth positive function $\beta\colon \RR \to\RR$ is the density-modulated stiffness and $c_0 \in \RR$ gives the spontaneous curvature. This energy is invariant  under orientation-preserving reparametrizations. The first part of the energy can be seen as a Helfrich-type energy for curves with modulated stiffness. The second part, the Dirichlet energy of $\rho$, ensures control of the density.

Modulo isometries of $\R^2$, the planar curve $\gamma$ (parametrized by arc-length) together with its orientation is uniquely determined by its inclination angle.
If the curve has length $L>0$, the energy can be expressed in terms of an inclination angle $\theta\colon [0,L]\to\RR$ and the density $\rho\colon[0,L]\to\RR$ by
\begin{align}
\label{Eallg}
\sE_\mu (\theta,\rho)=\frac{1}{2}\int_0^L\left(\beta(\rho)(\partial_s\theta-c_0)^2+\mu\left(\partial_s\rho\right)^2\right)\intd s,
\end{align}
using that $\kappa=\partial_s \theta$.
More precisely, we have 
$
\mathcal{E}_\mu(\gamma, \rho)= \sE_\mu (\theta, \rho),    
$
where $\theta$ is an inclination angle for $\gamma$.
Note that in \eqref{Eallg}, the differential operator $\partial_s$ is just the ordinary deriva\-tive with respect to $s$, the arc-length parameter. Similarly, $\intd s$ just denotes integration with respect to the variable $s\in [0,L]$.
A given function $\theta\colon[0,L]\to\RR$ represents the inclination angle of a $C^1$-closed curve if and only if 
\begin{align}\label{eq:angletocurve}
    \int_0^L\cos \theta\intd s  =\int_0^L\sin \theta \intd s = 0,\qquad 
    \theta(L)-\theta(0)=2\pi\omega,
\end{align}
for some $\omega\in \ZZ$, the rotation index of the curve. Notice that allowing $\omega$ to be negative we keep track of the orientation.

In the case of zero spontaneous curvature and fixed rotation index equal to one, this energy has previously been considered in \cite{BJSS2020}, where the minimization problem under the constraints of fixed length and fixed total mass (of the density) is studied. Especially in the case $c_0\neq 0$ it makes sense to fix the length of the curve, as otherwise a circle with suitable radius and a constant density is the trivial minimizer.

The goal of this article is to introduce a dynamic approach to minimize the elastic energy $\sE_\mu $ by evolving $(\theta, \rho)$ by the associated $L^2$-gradient flow.
In order to guarantee that $\theta$ describes a closed curve and to preserve the total mass of $\rho$, the evolution equations include nonlocal Lagrange multipliers. This results in the initial boundary value problem
\begin{align}
\label{eq:flow equation}
\qquad\begin{cases}
\begin{tabular}{l l l} 
 \multicolumn{2}{l }{$\displaystyle\partial_t\theta=\partial_s\left[\beta(\rho)(\partial_s\theta-c_0)\right]+\lambda_{\theta1}\sin\theta-\lambda_{\theta2}\cos\theta$\hphantom{-----}}  
 & in $(0,T) \times [0,L]$, \\
 \multicolumn{2}{l }{$\displaystyle\partial_t\rho=\mu\partial_s^2\rho-\frac{1}{2}\beta'(\rho)(\partial_s\theta-c_0)^2-\lambda_\rho$}
 & in $(0,T) \times [0,L]$, \\
 $\theta(\cdot, L)-\theta(\cdot,0)=2\pi \omega$,\hphantom{-----}
 & $\rho(\cdot,L)=\rho(\cdot,0)$
 & on $[0,T)$,\\
 $\partial_s \theta(\cdot,L)=\partial_s\theta(\cdot,0)$,
 & $\partial_s \rho(\cdot,L)=\partial_s \rho(\cdot,0)$
 & on $[0,T)$,\\
 $\theta(0,\cdot)=\theta_0$,
 & $\rho(0,\cdot)=\rho_0$ 
 & on $[0,L]$,
\end{tabular} 
\end{cases}
\end{align}
where $\theta_0$ satisfies \eqref{eq:angletocurve}.
The Lagrange multipliers $\lambda_{\theta 1}$, $\lambda_{\theta 2}$, $\lambda_\rho$ are chosen such that
\begin{align}\label{eq:constraint conservation}
    \qquad\int_0^L \cos \theta(t,s) \intd s = 0, \quad \int_0^L \sin \theta(t,s) \intd s = 0, \quad \int_0^L \rho(t,s) \intd s = \int_0^L\rho_0(s)\intd s
\end{align}
for all $t\in (0,T)$ (see Section \ref{mathset} for the explicit formulas), so that the evolution indeed corresponds to an evolution of closed curves with fixed total mass. The first boundary condition in \eqref{eq:flow equation} ensures that the rotation index is constant along the flow. The other boundary conditions are imposed in order to achieve that the corresponding curve is $C^2$-closed and that the density is $C^1$-periodic. Observe that the evolution equations are coupled so that we cannot treat them separately.

An essential feature of working with the angle function $\theta$ instead of the curve $\gamma$ is that the corresponding $L^2$-gradient flow of the energy \eqref{Eallg} is a parabolic system of second order in $(\theta, \rho)$.
In contrast, the Euler--Lagrange equation of \eqref{eq:sE} as a functional of $\gamma$ and $\rho$ is of fourth order in $\gamma$, like the classical elastica equation. Such an approach using the angle function has been used for instance in \cite{W1993} to prove well-posedness and global existence for a flow towards elastica.
Local and global existence of solutions for the classical elastic flow, given by a parabolic fourth order equation in $\R^n$, has been shown in several works, for instance \cite{Koiso,PoldenPDE,DKS,DAP2012,NovagaOkabe14,Wheeler15,DALP17,SpenerSTE17,PozziStinner,RS}. For a more detailed overview, see the recent survey \cite{MPP_Survey}.  A parabolic system related to \eqref{eq:flow equation} is studied in \cite{ABG22} and numerically elaborated in \cite{EGK2022}. 

In the following,  $\beta\in C^\infty(\R)$ satisfies $\beta(x)>0$ for all $x \in \RR$ and the model parameters
\begin{align*}
    L>0,\; \mu>0,\; \omega \in \ZZ \text{ and } c_0 \in \RR 
\end{align*}
are fixed. We summarize the compatibility conditions for the initial datum.
\begin{hyp}\label{hyp:intro}
Let $\theta_0, \rho_0\colon [0,L] \to \R$ be  $C^1$-functions such that 
\begin{align}\label{eq:bcid}
\begin{array}{lll}
    \theta_0( L)-\theta_0(0)=2\pi \omega, &  & \partial_s \theta_0(L)=\partial_s\theta_0(0),\\
     \rho_0(L)=\rho_0(0), &  & \partial_s \rho_0(L)=\partial_s \rho_0(0).
     \end{array}
\end{align}
Moreover, let $\theta_0$ satisfy \eqref{eq:angletocurve}.
\end{hyp}
In the following existence result, $h^{1+\alpha}$ is the little H\"older space, cf.\ Appendix \ref{appfuncspace}.

\begin{thm}[Local well-posedness]
\label{thm:Main STE}
Let $(\theta_0, \rho_0)\in h^{1+\alpha}([0,L])$ for some $\alpha\in (0,1)$ and assume Hypothesis \ref{hyp:intro}. Then there exists $T_0>0$ and a unique solution $(\theta, \rho)\in C^\infty((0,T_0)\times[0,L])$ of \eqref{eq:flow equation} on $(0,T_0)\times [0,L]$ which satisfies
\begin{align}\label{eq:loc wellposedness initial}
    \lim_{t\to 0} (\theta(t), \rho(t)) = (\theta_0, \rho_0) \quad \text{in } C^{1+\alpha}([0,L]).
\end{align}
Moreover, the solution depends continuously on the initial datum.
\end{thm}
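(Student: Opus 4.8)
The plan is to recast \eqref{eq:flow equation} as an abstract quasilinear parabolic evolution equation and apply maximal‑regularity theory in little Hölder spaces with time weights (in the spirit of Angenent and Lunardi), followed by a parabolic bootstrap for smoothness. First I would insert the explicit expressions for the Lagrange multipliers from Section \ref{mathset}: differentiating the constraints \eqref{eq:constraint conservation} in time and using the evolution equations yields a small linear system for $(\lambda_{\theta1},\lambda_{\theta2},\lambda_\rho)$ whose coefficient matrix is (a perturbation of) the Gram matrix of $\{\sin\theta,\cos\theta\}$; this matrix is invertible precisely because $\theta$ describes a genuine closed curve (so $\theta$ cannot be constant modulo $\pi$), hence $\lambda=\lambda(\theta,\rho)$ is a well‑defined smooth \emph{nonlocal} function of the solution. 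Subtracting a fixed affine function of $s$ from $\theta$ turns $\theta(L)-\theta(0)=2\pi\omega$ into a genuinely periodic boundary condition, so that $u:=(\theta,\rho)$ solves $\partial_t u = A(u)u + f(u)$, where $A(u)=\operatorname{diag}\bigl(\beta(\rho)\partial_s^2,\ \mu\partial_s^2\bigr)$ acts on functions subject to the periodic boundary conditions of \eqref{eq:flow equation}, and $f(u)$ collects the remaining lower‑order local terms (including $\tfrac12\beta'(\rho)(\partial_s\theta-c_0)^2$ and $\beta'(\rho)\,\partial_s\rho\,(\partial_s\theta-c_0)$) together with the nonlocal multiplier terms.

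For the functional setting I would take the base space $E_0=h^\alpha([0,L])\times h^\alpha([0,L])$ and the regularity space $E_1=\{u\in h^{2+\alpha}\times h^{2+\alpha}\colon u \text{ periodic},\ \partial_s u \text{ periodic}\}$, whose continuous interpolation space of exponent $1/2$ is $E_{1/2}=h^{1+\alpha}\times h^{1+\alpha}$, i.e.\ exactly the class of admissible initial data. Since $\beta(\rho_0),\mu>0$, the operator $A(u_0)\colon E_1\to E_0$ is uniformly elliptic of second order with periodic boundary conditions, so it generates an analytic semigroup and enjoys the optimal (maximal continuous) regularity property in this little Hölder scale. Because $\beta\in C^\infty$ and $h^{1+\alpha}\hookrightarrow C^0$, the map $v\mapsto A(v)$ is locally Lipschitz from $E_{1/2}$ into $\mathcal L(E_1,E_0)$; and because $\partial_s\theta\in h^\alpha$ whenever $\theta\in h^{1+\alpha}$, together with the smoothness and local Lipschitz dependence of $\lambda$, the map $f$ is locally Lipschitz from $E_{1/2}$ to $E_0$.

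With these ingredients I would invoke the abstract local well‑posedness theorem for quasilinear parabolic problems with non‑smooth data (Angenent; Lunardi, Ch.~8) — equivalently, run a contraction argument based on the linear maximal‑regularity estimate in time‑weighted Hölder norms, freezing $\beta(\rho)$ in the leading coefficient and iterating. This yields $T_0>0$ and a unique solution $u$ on $(0,T_0)$ in the $t^{1/2}$‑weighted parabolic Hölder class ($t^{1/2}$‑weighted $h^{2+\alpha}$ in space, plus $t^{1/2}$‑weighted Hölder‑$1/2$ in time), satisfying $u(t)\to u_0$ in $h^{1+\alpha}$ as $t\to0$, which is \eqref{eq:loc wellposedness initial}; continuous dependence on $u_0$ is part of the abstract statement. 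Finally, on any $[\delta,T_0-\delta]$ the coefficient $\beta(\rho)$ is Hölder in space‑time, so interior parabolic Schauder estimates give extra regularity; differentiating the system in $s$ and $t$ and iterating — using that $\beta,\beta'$ and $\lambda(\theta,\rho)$ are smooth functions of the solution — upgrades $u$ to $C^\infty((0,T_0)\times[0,L])$.

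The main obstacle is not the parabolicity, which is immediate from the diagonal principal part with positive coefficients, but the treatment of the nonlocal Lagrange multipliers: one must verify invertibility of the matrix defining $\lambda$ uniformly in a full neighbourhood of $u_0$ and its Lipschitz dependence in the interpolation topology $E_{1/2}$, so that $f$ genuinely fits the quasilinear framework; and one must carefully establish the maximal continuous regularity of the linearization with the mixed periodic/affine boundary conditions in the little Hölder scale. These are the points where the argument is delicate, while the remaining steps are standard.
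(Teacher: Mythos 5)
Your proposal follows the same high-level scaffold as the paper --- affine shift $\theta\mapsto\theta-2\pi\omega s/L$ to make the boundary conditions periodic, time-weighted little Hölder spaces with trace space $h^{1+\alpha}$, treatment of the nonlocal Lagrange multipliers as a lower-order Lipschitz perturbation, and a parabolic Schauder bootstrap for interior smoothness --- but the engine driving the short-time existence is genuinely different. You invoke the abstract quasilinear machinery of Angenent/Lunardi/Cl\'ement--Simonett (maximal continuous regularity of $A(u_0)$ plus a contraction argument with the leading coefficient frozen at the initial datum). The paper instead follows the Inverse Function Theorem strategy of \cite{BCF2010}: it first constructs a smooth reference function $(\bar u,\bar\rho)\in\EE_1$ solving the heat equation with the same initial datum, then shows that $\Phi(u,\rho)=(\partial_t(u,\rho)-\oF(u,\rho),\,(u,\rho)(0,\cdot))$ is a local diffeomorphism near $(\bar u,\bar\rho)$, the invertibility of $\textup{D}\Phi(\bar u,\bar\rho)$ coming from the time-independent maximal-regularity result of \cite{lC2011} plus an explicit perturbation estimate in small time. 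The trade-offs: your route is more modular and would be shorter once the abstract hypotheses (Lipschitz dependence of $A$ and $f$ in the $E_{1/2}$ topology, maximal regularity of the frozen operator) are checked; the paper's IFT route yields $C^1$ dependence on the initial datum essentially for free and keeps every estimate explicit, which is convenient given the nonlocal terms. Two further differences worth noting. First, the paper works with weight exponent $\eta$ strictly bigger than $1/2$ (choosing $\bar\eta,\bar\alpha$ with $2\bar\eta+\bar\alpha=1+\alpha$), not the borderline $t^{1/2}$ weight you propose; this strict inequality is used to control the nonlinearities (cf.\ Lemma \ref{est:v-v0}, Proposition \ref{prop:sollinprobt}) and to get $\partial_s^2 u\in L^2_tL^2_s$ for the uniqueness proof (Lemma \ref{lem:ds2u in L2}), so if you stay at $\eta=1/2$ you must verify that the abstract framework still closes. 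Second, the contraction gives uniqueness only within the fixed-point class, whereas the paper proves uniqueness in the full $\EE_1$ class by a separate Gronwall/energy argument (Lemma \ref{lem:unique}), which is what is needed to match the statement; you would need an analogous a~posteriori argument.
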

In the literature, short-time existence and uniqueness for quasilinear parabolic systems is widely accepted if the initial datum is smooth. Our contribution provides a complete proof of local well-posedness for the system \eqref{eq:flow equation}, including continuous dependence on the data, even for initial data with low regularity and despite the presence of nonlocal terms, in a fairly concise way.

The idea of the proof is to meet the boundary conditions by formulating the problem in a periodic setting. 
Then, following the ideas in \cite{BCF2010}, the Inverse Function Theorem between appropriate time-dependent little H\"older spaces yields existence and continuous dependence on the initial datum. We deduce the local invertibility of the Fréchet derivative from the maximal regularity theory developed in \cite{lC2011} and a perturbation argument.
The precise formulation of the continuous dependence is given in Proposition \ref{prop:existence} and Remark \ref{rem:contdep} below.
To show instantaneous smoothing, we rely on the theory of parabolic systems in 
\cite{LSU1988} and a bootstrap argument. Finally, exploiting the dissipative structure of the system, uniqueness follows from an energy argument.

For the classical elastic flow, one expects long-time existence of the solution. We prove such a result in this much more general model.
\begin{thm}[Global existence]
\label{longtimeex} The solution $(\theta,\rho)$ in Theorem \ref{thm:Main STE} exists for all times, i.e.\ $T=\infty$.
\end{thm}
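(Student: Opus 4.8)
The plan is to prove global existence by establishing a uniform bound on a suitable norm of $(\theta,\rho)$ that precludes finite-time blow-up; by the local well-posedness and continuous dependence in Theorem \ref{thm:Main STE}, the solution can then be continued past any finite time. Concretely, I would argue by contradiction: suppose the maximal existence time $T < \infty$. The first and most robust ingredient is the energy dissipation identity. Since \eqref{eq:flow equation} is (modulo the nonlocal Lagrange multipliers) the $L^2$-gradient flow of $\sE_\mu$, and since the multipliers $\lambda_{\theta 1}, \lambda_{\theta 2}, \lambda_\rho$ enforce constraints that are tangent to the constraint manifold, one expects
\begin{align*}
\frac{\mathrm d}{\mathrm d t}\sE_\mu(\theta(t),\rho(t)) = -\int_0^L\left(|\partial_t\theta|^2 + |\partial_t\rho|^2\right)\intd s \le 0,
\end{align*}
so that $\sE_\mu(\theta(t),\rho(t)) \le \sE_\mu(\theta_0,\rho_0) =: E_0$ for all $t\in(0,T)$, and moreover $\partial_t\theta, \partial_t\rho \in L^2((0,T)\times[0,L])$. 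Since $\beta$ is bounded below by a positive constant on the relevant range (to be determined), the energy bound yields control of $\|\partial_s\theta - c_0\|_{L^2}$ and $\mu^{1/2}\|\partial_s\rho\|_{L^2}$, hence of $\|\partial_s\theta\|_{L^2}$ and $\|\partial_s\rho\|_{L^2}$ uniformly in $t$.

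The next step is to upgrade these $L^2$-in-space bounds to uniform $C^{1+\alpha}$ (or at least uniform $H^2$, or $C^1$) control so that the local existence theorem can be reapplied with a uniform time step at times approaching $T$. Here one needs to control the density $\rho$ itself, not just $\partial_s\rho$: the constraint $\int_0^L\rho\intd s = \int_0^L\rho_0\intd s$ fixes the mean of $\rho$, and combined with the uniform bound on $\partial_s\rho$ in $L^2$ and the Poincaré–Wirtinger inequality on $[0,L]$, this gives a uniform $H^1$-bound on $\rho$, hence a uniform $L^\infty$-bound on $\rho$ by Sobolev embedding in one dimension. This $L^\infty$-bound on $\rho$ pins down the range on which $\beta, \beta', \beta''$ matter, confirming the lower bound $\beta(\rho)\ge \beta_0 > 0$ used above and bounding all the coefficients in \eqref{eq:flow equation}. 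Similarly $\theta$ is controlled modulo constants by $\partial_s\theta$, and the boundary/closure conditions \eqref{eq:angletocurve} fix the relevant constant. One then estimates the nonlocal multipliers: using their explicit formulas (from Section \ref{mathset}) together with the uniform $H^1\times H^1$ bound, one shows $\lambda_{\theta 1}, \lambda_{\theta 2}, \lambda_\rho$ are uniformly bounded on $(0,T)$.

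With all coefficients and right-hand sides under control, I would run a parabolic bootstrap. Testing the first equation with $\partial_s^2\theta$ (or $\partial_t\theta$) and the second with $\partial_s^2\rho$, using the coupling structure and absorbing cross terms via Young's inequality, one derives a differential inequality of the form $\frac{\mathrm d}{\mathrm d t}\big(\|\partial_s^2\theta\|_{L^2}^2 + \|\partial_s^2\rho\|_{L^2}^2\big) \le C(1 + \|\partial_s^2\theta\|_{L^2}^2 + \|\partial_s^2\rho\|_{L^2}^2)$ with $C$ depending only on $E_0$, the data, and the fixed parameters — Grönwall's lemma then gives a uniform $H^2\times H^2$ bound on $[0,T)$, which embeds into $C^{1+1/2}$, in particular into $C^{1+\alpha}$ for $\alpha \le 1/2$. (Alternatively one can invoke the linear parabolic Schauder or $L^p$-theory of \cite{LSU1988} directly with the now-Hölder coefficients.) Then $\lim_{t\to T}(\theta(t),\rho(t))$ exists in $C^{1+\alpha}$, the limit satisfies Hypothesis \ref{hyp:intro} (the boundary and closure conditions are preserved along the flow by construction), so Theorem \ref{thm:Main STE} extends the solution beyond $T$, contradicting maximality. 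I expect the main obstacle to be the bootstrap estimate: the quasilinear term $\partial_s[\beta(\rho)(\partial_s\theta - c_0)]$ couples $\rho$ and $\theta$ at the level of second derivatives, so the energy estimates for the two equations must be done simultaneously, and one must be careful that the $\beta'(\rho)(\partial_s\theta - c_0)^2$ term in the $\rho$-equation — which is quadratic in the top-order quantity $\partial_s\theta$ of the first equation — does not destroy the closure of the estimate; this is where the one-dimensional Gagliardo–Nirenberg interpolation inequalities and the a priori energy bound must be combined carefully, together with the positivity and smoothness of $\beta$.
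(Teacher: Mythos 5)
Your overall architecture — energy dissipation, then uniform higher-order a priori bounds, then restart via local well-posedness and a contradiction to maximality of $T$ — matches the paper, and the first stage ($L^\infty$ on $\theta,\rho$; $L^2$ on $\partial_s\theta,\partial_s\rho$; $L^2_{t,s}$ on $\partial_t(\theta,\rho)$ from \eqref{eq:energy decay}; boundedness of the multipliers) is essentially what the paper does. But there is a genuine gap at the crucial bootstrap step, and you have in fact put your finger on it yourself in the last sentence without resolving it.

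The proposed differential inequality
$\frac{\intd}{\intd t}\big(\|\partial_s^2\theta\|_{L^2}^2+\|\partial_s^2\rho\|_{L^2}^2\big)\leq C\big(1+\|\partial_s^2\theta\|_{L^2}^2+\|\partial_s^2\rho\|_{L^2}^2\big)$
does not hold. When you differentiate the $\rho$-equation and pair with $\partial_s^2\rho$, integration by parts produces a term of the schematic form
$\int_0^L \partial_s^3\rho\,\beta'(\rho)(\partial_s\theta-c_0)\partial_s^2\theta\intd s$,
and similarly from the $\theta$-equation a term $\int_0^L\partial_s^3\theta\,\beta'(\rho)\partial_s^2\rho\,(\partial_s\theta-c_0)\intd s$. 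After Young's inequality (absorbing $\partial_s^3$ into the coercive pieces), you are left with $\int(\partial_s\theta-c_0)^2(\partial_s^2\theta)^2\intd s$ and $\int(\partial_s\theta-c_0)^2(\partial_s^2\rho)^2\intd s$. The only uniform control available at this stage is $\|\partial_s\theta\|_{L^2}\leq C$, so $\|\partial_s\theta\|_{L^\infty}$ must be interpolated against $\|\partial_s^2\theta\|_{L^2}$, and the resulting right-hand side grows superlinearly in $\|\partial_s^2\theta\|_{L^2}^2+\|\partial_s^2\rho\|_{L^2}^2$ (roughly a power $\tfrac32$). Gr\"onwall applied to such an inequality yields only a lower bound on the blow-up time, not global boundedness. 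This is exactly the obstruction the paper flags ("the nonlinear coupling ... seems to be incompatible with a direct application of interpolation inequalities").

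The paper's resolution, which is the missing idea, is to estimate the velocity first: set $\varphi(t)=\tfrac12\int_0^L\big((\partial_t\theta)^2+(\partial_t\rho)^2\big)\intd s$, differentiate the evolution equations in $t$, and pair with $\partial_t\theta,\partial_t\rho$. There the troublesome coefficients appear only as $\beta''(\rho)(\partial_s\theta-c_0)^2$ multiplying $(\partial_t\rho)^2$, and since $\int_0^L\partial_t\rho\intd s=0$ the one-dimensional Agmon-type estimate $\|\partial_t\rho\|_{L^\infty}^2\leq 2\|\partial_t\rho\|_{L^2}\|\partial_t\partial_s\rho\|_{L^2}$ lets everything be absorbed by the good terms $-\int\beta(\rho)(\partial_t\partial_s\theta)^2$ and $-\mu\int(\partial_t\partial_s\rho)^2$, giving the genuinely linear inequality $\frac{\intd}{\intd t}\varphi\leq C\varphi$. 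Because $\int_0^T\varphi\intd t$ is bounded independently of $T$ by the dissipation identity, this linear inequality \emph{does} yield $\sup_{t<T}\varphi(t)\leq C$. Only then does one go back to the equations, writing $\beta(\rho)\partial_s^2\theta$ and $\mu\partial_s^2\rho$ \emph{algebraically} in terms of $\partial_t(\theta,\rho)$ and lower-order quantities, and obtain the uniform $W^{2,2}$ (and then $W^{3,2}$) bounds by interpolation. A secondary point you gloss over is that boundedness of the Lagrange multipliers $\lambda_{\theta1},\lambda_{\theta2}$ requires a quantitative lower bound on $\det\Pi(\theta)$ along the flow; the paper proves this by a compactness/variational argument (Lemma \ref{lem:boundPiFabi}), which is not automatic from the $H^1$ a priori bounds alone and is the reason the argument covers all winding numbers including $\omega=0$.
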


The flow equations \eqref{eq:flow equation} are a parabolic system, so we cannot use arguments based on the maximum principle to show global existence.
Instead, we rely on energy estimates to prove that suitable Sobolev norms along the solution cannot blow up in finite time. To that end, we first show boundedness of the velocity $\partial_t(\theta, \rho)$ in order to deal with the nonlinear coupling which limits the direct application of interpolation inequalities. 
To allow for $\omega=0$,
a new argument is needed to show a priori boundedness of the nonlocal Lagrange multipliers, compared to the strategy used in \cite{W1993}. 

Once global existence is established, it is natural to study the asymptotic behavior of the flow. Although we do not have a classification of the stationary solutions to \eqref{eq:flow equation} at hand, we can prove full convergence.
\begin{thm}[Long-time behavior]\label{thm:convergence}
Let $\beta\colon \R\to\R$ be real analytic. Then, the global solution $(\theta,\rho)$ in Theorem \ref{thm:Main STE} converges in $C^{2+\tilde\alpha}([0,L])$ for all $\tilde\alpha\in (0,\frac12)$ to a stationary solution of \eqref{Eallg} as $t\to\infty$, i.e.\ to a solution of
\begin{align}\label{eq:stationary}
\begin{cases}
\begin{tabular}{l l l} 
 \multicolumn{2}{l }{$\displaystyle 0=\partial_s\left[\beta(\rho)(\partial_s\theta-c_0)\right]+\lambda_{\theta1}\sin\theta-\lambda_{\theta2}\cos\theta$\hphantom{-----}}  
 & in $[0,L]$, \\
 \multicolumn{2}{l }{$\displaystyle 0=\mu\partial_s^2\rho-\frac{1}{2}\beta'(\rho)(\partial_s\theta-c_0)^2-\lambda_\rho$}
 & in $[0,L]$, \\
 $\theta(L)-\theta(0)=2\pi \omega$,\hphantom{-----}
 & $\rho(L)=\rho(0)$,
 &\\
 $\partial_s \theta(L)=\partial_s\theta(0)$,
 & $\partial_s \rho(L)=\partial_s \rho(0)$
 &
\end{tabular} 
\end{cases}
\end{align}
for some $\lambda_{\theta1}, \lambda_{\theta2}, \lambda_\rho\in \R$.
\end{thm}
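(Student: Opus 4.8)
The plan is to combine the global existence result (Theorem \ref{longtimeex}) with a {\L}ojasiewicz--Simon gradient inequality for the energy $\sE_\mu$, a strategy standard for gradient flows of analytic functionals but requiring care here because of the nonlocal Lagrange multipliers, the coupling, and the constraints in \eqref{eq:angletocurve} and \eqref{eq:constraint conservation}. First I would establish uniform-in-time bounds: since $t\mapsto \sE_\mu(\theta(t),\rho(t))$ is nonincreasing along the flow (the equations are the constrained $L^2$-gradient flow, and the Lagrange multiplier terms are $L^2$-orthogonal to admissible variations), the energy is bounded; together with the fixed length $L$, fixed total mass, and the constraints \eqref{eq:constraint conservation}, this yields a uniform bound on $\|\partial_s\rho\|_{L^2}$ and hence, via Poincaré-type estimates on the period, on $\rho$ in $H^1$, and on $\partial_s\theta - c_0$ in $L^2$, so $\theta$ is controlled in $H^1$ modulo the affine part fixed by the boundary condition $\theta(L)-\theta(0)=2\pi\omega$. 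Parabolic smoothing (as used already for Theorems \ref{thm:Main STE} and \ref{longtimeex}) then upgrades these to uniform bounds in $C^{2+\tilde\alpha}$ on $[t,\infty)$ for every $t>0$ and every $\tilde\alpha\in(0,\tfrac12)$, so that the orbit $\{(\theta(t),\rho(t)): t\ge 1\}$ is relatively compact in $C^{2+\tilde\alpha'}$ for slightly smaller $\tilde\alpha'$.

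Next I would prove the {\L}ojasiewicz--Simon inequality: there exist a stationary solution $(\theta_\infty,\rho_\infty)$ of \eqref{eq:stationary}, constants $C>0$, $\sigma\in(0,\tfrac12]$ and a neighborhood $U$ in $H^2\times H^2$ (intersected with the constraint manifold) such that
\begin{align}\label{eq:LS-plan}
    |\sE_\mu(\theta,\rho) - \sE_\mu(\theta_\infty,\rho_\infty)|^{1-\sigma} \le C\,\|\nabla\sE_\mu(\theta,\rho)\|_{L^2}
\end{align}
for all $(\theta,\rho)\in U$, where $\nabla\sE_\mu$ denotes the projected gradient, i.e.\ the right-hand side of the flow \eqref{eq:flow equation} with the Lagrange multipliers. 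This follows from the abstract {\L}ojasiewicz--Simon theorem (e.g.\ the Banach-space version of Chill) once one checks: (i) $\sE_\mu$ extends to an analytic functional on $H^2\times H^2$ — true because $\beta$ is real analytic and the nonlinearities $\beta(\rho)(\partial_s\theta-c_0)^2$, $(\partial_s\rho)^2$ are analytic superposition operators on $H^2\hookrightarrow C^1$; (ii) the first derivative $\sE_\mu'$, as a map into the dual, is analytic, and the second derivative at a critical point is Fredholm of index zero — here one works on the subspace tangent to the constraints, where the linearization is an elliptic operator (essentially $\mathrm{diag}(-\partial_s(\beta(\rho_\infty)\partial_s\,\cdot\,), -\mu\partial_s^2)$ plus lower-order coupling and a finite-rank nonlocal perturbation from the multipliers) with periodic boundary conditions, hence Fredholm. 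Care is needed to handle the nonlocal Lagrange multipliers $\lambda_{\theta1},\lambda_{\theta2},\lambda_\rho$ as smooth (indeed analytic) functionals of $(\theta,\rho)$ — their explicit formulas from Section \ref{mathset} make this a routine but necessary verification — and to make sure the abstract framework is applied on the correct Hilbert manifold given by \eqref{eq:angletocurve} and \eqref{eq:constraint conservation}.

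With \eqref{eq:LS-plan} in hand, the convergence argument is the classical one. Using the energy dissipation identity $\frac{d}{dt}\sE_\mu(\theta(t),\rho(t)) = -\|\partial_t(\theta,\rho)\|_{L^2}^2 = -\|\nabla\sE_\mu(\theta(t),\rho(t))\|_{L^2}^2$ (valid along the flow since the multiplier terms do not contribute, being orthogonal to $\partial_t(\theta,\rho)$, which itself satisfies the constraints), one shows that whenever the solution lies in the neighborhood $U$, the quantity $(\sE_\mu(\theta(t),\rho(t)) - \sE_\mu(\theta_\infty,\rho_\infty))^\sigma$ decays and $\int^\infty \|\partial_t(\theta,\rho)\|_{L^2}\,dt < \infty$; this gives that $(\theta(t),\rho(t))$ is Cauchy in $L^2$, hence converges in $L^2$ to some limit. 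One needs the standard topological argument (a continuity/connectedness argument using relative compactness of the orbit, the fact that every $\omega$-limit point is a stationary solution — which follows from the dissipation identity and parabolic regularity — and that $\sE_\mu$ is constant on the $\omega$-limit set) to guarantee that the trajectory actually enters and remains in $U$ around an appropriately chosen $\omega$-limit point $(\theta_\infty,\rho_\infty)$. Finally, the uniform $C^{2+\tilde\alpha}$ bounds plus interpolation upgrade the $L^2$-convergence to convergence in $C^{2+\tilde\alpha}([0,L])$ for every $\tilde\alpha\in(0,\tfrac12)$, and the limit solves \eqref{eq:stationary} with multipliers obtained as limits of $\lambda_{\theta1}(t),\lambda_{\theta2}(t),\lambda_\rho(t)$. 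The main obstacle I expect is verifying the Fredholm property of the constrained second variation and correctly incorporating the nonlocal multiplier terms into the analytic structure, so that the abstract {\L}ojasiewicz--Simon theorem genuinely applies; the rest is a by-now-standard adaptation of the Simon--Jendoubi scheme.
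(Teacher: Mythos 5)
Your proposal follows essentially the same strategy as the paper's proof: uniform $W^{3,2}$-in-time bounds from the global existence estimates give subconvergence in $C^{2+\tilde\alpha}$ along sequences, and the limit is stationary because the dissipation rate $\|\partial_t(\theta,\rho)\|_{L^2}^2$ tends to zero; full convergence then follows from a constrained {\L}ojasiewicz--Simon inequality combined with the Simon--Jendoubi scheme (define the exit time $\tau_n$ from a small $W^{2,2}$-ball around the sublimit, integrate the gradient inequality, conclude $\partial_t(\theta,\rho)\in L^1(0,\infty;L^2)$ and that the trajectory never leaves the ball). The one technical point where the paper is more specific than your plan is the choice of framework for the constrained LS inequality: rather than treating the constraint set as a Hilbert manifold and verifying Fredholmness of the \emph{constrained} second variation, the paper applies the constrained LS theorem of \cite{ConstrLoja}, which only requires checking that the unconstrained $(\nabla E_\mu)'$ is Fredholm of index zero and that the finitely many constraint gradients $\nabla G^j$ are linearly independent with appropriate compactness; the explicit Lagrange-multiplier form of the projected gradient then comes for free from that framework, so the nonlocal multipliers never have to be built into the analytic structure of the energy, only of $E_\mu$ and $G$ separately.
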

From the energy estimates used in the proof of Theorem \ref{longtimeex}, we first deduce subconvergence of the solution. To prove full convergence, we assume $\beta$ to be analytic, since we rely on a suitable version of the {\L}ojasiewicz--Simon gradient inequality \cite{ConstrLoja}, a powerful functional analytic tool for studying the asymptotic behavior of (geometric) PDEs, even in the constrained setting; see, for instance, \cite{Simon83,CFS09,DPS16} and \cite{RS,VPWF,IsoWF}. 

Geometrically, our results can be interpreted as follows. The solution $(\theta(t), \rho(t))_{t\geq 0}$ describes a family of closed curves $(\gamma(t))_{t\geq 0}$ with density $(\rho(t))_{t\geq 0}$ that evolves in time decreasing the energy \eqref{eq:sE} and converging to an equilibrium as $t\to\infty$.

This article is structured as follows. In Section \ref{mathset}, we briefly review some basic geometric concepts concerning curves and give the formulas for the Lagrange multipliers.
The goal of Section \ref{ste} is to prove Theorem \ref{thm:Main STE}. First, we prove a local well-posedness result in a periodic setting and then show that this periodic solution gives a solution to \eqref{eq:flow equation}. Sections \ref{lte} and \ref{sec:longtimebehavior} are devoted to prove Theorem \ref{longtimeex} and Theorem \ref{thm:convergence}, respectively. 

\section{Preliminaries}\label{mathset}

We consider an arc-length parametrization of a closed regular curve $\gamma\colon [0,L]\to\R^2$ of length $L>0$ in the plane, which we assume to be sufficiently smooth. 
By a standard result in elementary differential geometry, there exists a  function $\theta\colon [0,L]\to\R$ such that $\partial_s \gamma =
(\cos\theta, \sin\theta)$.
Such an \emph{inclination angle} $\theta$ is not unique, in fact it is only unique up to addition of an integer multiple of $2\pi$. Nevertheless, many geometric quantities of the curve can be expressed in terms of an inclination angle; the (signed) \textit{curvature} $\kappa$ of the curve $\gamma$ is given by $\kappa=\partial_s \theta$ and its \emph{rotation index} $\omega\in \ZZ$ satisfies $2\pi \omega = \theta(L)-\theta(0)$.

In \eqref{eq:flow equation} three Lagrange multipliers appear. In the first equation, $\lambda_{\theta1}$ and $\lambda_{\theta2}$ ensure that $\gamma$ remains closed along the flow if the initial curve is closed. 
Indeed, defining 
\begin{align}
\label{eq:lambdatheta}
\begin{pmatrix}
\lambda_{\theta1}(t) \\ \lambda_{\theta2}(t)
\end{pmatrix}
:=\Pi^{-1}(\theta)(t)\int_0^L\begin{pmatrix}
-\sin\theta \\ \cos\theta
\end{pmatrix}
\partial_s\big(\beta(\rho)(\partial_s\theta-c_0)\big)\intd s,
\end{align}
where $\Pi^{-1}(\theta)(t)$ denotes the inverse of the matrix
\begin{align}\label{eq:def Pi}
\Pi(\theta)(t):=\begin{pmatrix}
\int_0^L\sin^2\theta\intd s & -\int_0^L\cos\theta\sin\theta\intd s \\
-\int_0^L\cos\theta\sin\theta\intd s & \int_0^L\cos^2\theta\intd s
\end{pmatrix},
\end{align}
for a sufficiently smooth solution $(\theta,\rho)$ of \eqref{eq:flow equation} we have 
\begin{align}\label{eq:dt lambda theta}
&\partial_t\int_0^L\begin{pmatrix}
\cos\theta \\ \sin\theta
\end{pmatrix}
\intd s
=0.
\end{align}

\begin{rem}\label{rem:det=0 iff theta const}
As already observed in \cite[Lemma 2.1]{W1993} we have
    \begin{align*}
        \det \Pi(\theta)(t) = \frac{1}{2} \int_0^L \int_0^L \sin^2\psi(t,s, \tilde{s})\intd s \intd \tilde{s},
    \end{align*}
    where $\psi(t,s, \tilde{s}) := \theta(t,s)-\theta(t,\tilde{s})$. Hence, as long as $\theta(t,\cdot)$ is not constant, the matrix $\Pi(\theta)(t)$ is invertible. 
    In particular, the Lagrange multipliers in \eqref{eq:lambdatheta} are well-defined, as long as the angle function describes a closed curve.
\end{rem}
\begin{rem}
\label{rem:lambdthetaafterpI}
The boundary conditions in \eqref{eq:flow equation} make it possible to use integration by parts to write the Lagrange multipliers $\lambda_{\theta1}$ and $\lambda_{\theta2}$ in the form
\begin{align*}
\begin{pmatrix}
\lambda_{\theta1}(t) \\ \lambda_{\theta2}(t)
\end{pmatrix}
=\Pi^{-1}(\theta)(t)\int_0^L\begin{pmatrix}
\cos\theta \\ \sin\theta
\end{pmatrix}
\partial_s\theta\,\beta(\rho)(\partial_s\theta-c_0)\intd s.
\end{align*}
\end{rem}

The third Lagrange multiplier $\lambda_\rho$ is chosen so that the total mass remains
fixed during the flow. Defining
\begin{align}\label{eq:lambdarho}
\lambda_\rho(t):=-\frac{1}{2L}\int_0^L\beta'(\rho)(\partial_s\theta-c_0)^2\intd s,
\end{align}
a sufficiently smooth solution $(\theta,\rho)$ of \eqref{eq:flow equation} satisfies
\begin{align}\label{eq:dt lambda rho}
\partial_t\int_0^L\rho\intd s
&=0.
\end{align}

From the fact that $\lambda_{\theta1}$ and $\lambda_{\theta2}$ ensure that the curve remains closed during the flow, we can already deduce that the integral over the inclination angle is preserved.
\begin{lem}
\label{intincang}
Let $T>0$ and $(\theta,\rho)$ be a sufficiently smooth solution of \eqref{eq:flow equation} on $(0,T)\times[0,L]$. 
Then for all $t\in (0,T)$ we have
$\int_0^L\theta\intd s=\int_0^L \theta_0 \intd s$.
\end{lem}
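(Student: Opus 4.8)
The plan is to differentiate the quantity $t\mapsto\int_0^L\theta(t,s)\intd s$ in time, show it is constant on $(0,T)$, and then identify the constant by letting $t\to0$. Since $(\theta,\rho)$ is sufficiently smooth, differentiation under the integral sign and the first equation in \eqref{eq:flow equation} give
\begin{align*}
\partial_t\int_0^L\theta\intd s
=\int_0^L\partial_s\big[\beta(\rho)(\partial_s\theta-c_0)\big]\intd s
+\lambda_{\theta1}\int_0^L\sin\theta\intd s-\lambda_{\theta2}\int_0^L\cos\theta\intd s .
\end{align*}

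The first term on the right-hand side is a total derivative, so by the fundamental theorem of calculus it equals $\big[\beta(\rho)(\partial_s\theta-c_0)\big]_{s=0}^{s=L}$; this vanishes because the boundary conditions in \eqref{eq:flow equation} impose $\rho(\cdot,L)=\rho(\cdot,0)$ and $\partial_s\theta(\cdot,L)=\partial_s\theta(\cdot,0)$. For the remaining two terms I would invoke the conservation of the closedness constraints: by Hypothesis \ref{hyp:intro} the initial datum $\theta_0$ satisfies \eqref{eq:angletocurve}, hence $\int_0^L\cos\theta_0\intd s=\int_0^L\sin\theta_0\intd s=0$, and the defining choice \eqref{eq:lambdatheta} of $\lambda_{\theta1},\lambda_{\theta2}$ yields \eqref{eq:dt lambda theta}, so that $\int_0^L\cos\theta(t,\cdot)\intd s=\int_0^L\sin\theta(t,\cdot)\intd s=0$ for every $t\in(0,T)$ (cf.\ also \eqref{eq:constraint conservation}). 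Thus both remaining terms vanish and $\partial_t\int_0^L\theta\intd s=0$ on $(0,T)$.

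Therefore $t\mapsto\int_0^L\theta(t,s)\intd s$ is constant on $(0,T)$. Passing to the limit $t\to0$ — which is legitimate since the solution attains its initial datum, e.g.\ in $C^{1+\alpha}([0,L])$ as in Theorem \ref{thm:Main STE}, or by continuity up to $t=0$ for a sufficiently smooth solution — identifies this constant with $\int_0^L\theta_0\intd s$, which is the assertion.

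I do not expect any real obstacle here; the single point deserving attention is the preservation of the constraints $\int_0^L\cos\theta\intd s=\int_0^L\sin\theta\intd s=0$, but this is already built into the construction of the Lagrange multipliers and only uses the invertibility of $\Pi(\theta)(t)$ from Remark \ref{rem:det=0 iff theta const}.
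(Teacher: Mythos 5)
Your proof is correct and follows essentially the same route as the paper's: integrate the evolution equation for $\theta$, kill the divergence term via the boundary conditions and the remaining terms via the preserved constraints \eqref{eq:constraint conservation}, then identify the constant at $t=0$. The paper states the same argument more tersely; your added remarks on why the constraints are preserved and on the passage $t\to 0$ are sound elaborations rather than a different method.
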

\begin{proof}
Integration of the evolution equation for $\theta$ in \eqref{eq:flow equation} yields
\begin{align*}
\partial_t\int_0^L\theta\intd s=\;&\beta(\rho)\left(\partial_s\theta-c_0\right)\Big\vert_0^L+\lambda_{\theta1}\int_0^L\sin\theta\intd s-\lambda_{\theta2}\int_0^L\cos\theta\intd s.
\end{align*}
By \eqref{eq:constraint conservation}, the integrals vanish. The boundary conditions in \eqref{eq:flow equation}
yield the claim. 
\end{proof}
A direct computation reveals that we have an $L^2$-gradient flow structure, i.e.\ for a sufficiently smooth solution $(\theta,\rho)$ of \eqref{eq:flow equation}, we have
\begin{align}\label{eq:energy decay}
    \frac{\intd}{\intd t}\sE_\mu (\theta,\rho)
    =-\int_0^L\left(\partial_t\theta\right)^2\intd s-\int_0^L\left(\partial_t\rho\right)^2\intd s.
\end{align}

\section{Local well-posedness and regularity}
\label{ste}
To prove Theorem \ref{thm:Main STE}, we will consider a periodic setting. This is quite natural due to the geometric interpretation of the problem. Doing so, we get rid of the boundary conditions which enables us to apply an existence result in \cite{lC2011} for a suitable linearization of our problem. We first prove short-time existence and continuous dependence on the data as well as uniqueness and smoothness of the solution to the periodic problem. Then, by a suitable transformation, we can transfer the results to problem \eqref{eq:flow equation}.

\subsection{Transformation to a periodic setting}
\label{subsec:trafoper}
For the moment, assume that $(\theta, \rho)$ is a sufficiently smooth function in $(t,s)$.
We observe that the boundary conditions in \eqref{eq:flow equation} imply that $\partial_s\theta$ is a $C^0$-periodic function, i.e.\ it can be extended to a continuous $L$-periodic function on $\RR$. Similarly, $\rho$ is a $C^1$-periodic function. 
The angle function $\theta$ is itself not periodic if the rotation number is different from zero. We hence first perform a suitable transformation.
Define the function 
\begin{align}\label{eq:defphi}
\phi \colon [0,L] \to \RR, \quad \phi(s):= \frac{2 \pi \omega s}{L}
\end{align}
and consider the function $u(t,s):=\theta(t,s)-\phi(s)$. The boundary conditions in \eqref{eq:flow equation} 
imply
that $u$ and $\rho$ are $C^1$-periodic functions on $[0,L]$. We thus consider the $L$-periodic extensions of $u_0:=\theta_0-\phi$ and $\rho_0$ to all of $\RR$ 
and look for $L$-periodic solutions $(u,\rho)$ defined on $(0,T)\times \RR$ for some $T>0$.
Now, instead of Hypothesis \ref{hyp:intro}, it is sufficient to assume that
\begin{align}
\label{eq:hypu}
    \int_0^L\cos(u_0+\phi)\intd s =\int_0^L\sin(u_0+\phi)\intd s=0.
\end{align}
In terms of $(u, \rho)$ $L$-periodic, \eqref{eq:flow equation} can be equivalently written as
\begin{align}
\label{eq:flow eq u} 
\begin{cases}
\displaystyle\partial_tu=\partial_s\left[\beta(\rho)\left(\partial_su+\partial_s \phi-c_0\right)\right]+\Lambda_u(u,\rho) &\textup{in } (0,T)\times{\RR},\\
\displaystyle\partial_t\rho=\mu\partial_s^2\rho-\frac{1}{2}\beta'(\rho)\left(\partial_su+\partial_s\phi-c_0\right)^2+\Lambda_\rho(u,\rho)\quad &\textup{in } (0,T)\times{\RR},\\
u(0,\cdot)=u_0, \qquad
\rho(0,\cdot)=\rho_0 \qquad & \textup{on } \RR.
\end{cases}
\end{align}
Here, to simplify the notation, instead of working with $(\lambda_{\theta1},\lambda_{\theta2})$ (see Remark \ref{rem:lambdthetaafterpI}) we consider $\Lambda_u$ given by
\allowdisplaybreaks
\begin{align}\nonumber
\Lambda_u(u,\rho)(t,s) &:=\begin{pmatrix}
\lambda_{\theta1}(u+\phi, \rho)(t)\\ \lambda_{\theta 2}(u+\phi, \rho)(t)
\end{pmatrix} \cdot \begin{pmatrix}
\sin(u(t,s)+\phi(s))\\ -\cos(u(t,s)+\phi(s))
\end{pmatrix}\\ \label{eq:LambdaupI}
&= \Pi^{-1}\left(u+\phi\right)\!
\int_0^L\begin{pmatrix}
\cos\left(u+\phi\right) \\ \sin\left(u+\phi\right)
\end{pmatrix}
\left(\partial_su+\partial_s\phi\right)\beta(\rho)\left(\partial_su+\partial_s\phi-c_0\right)\!\intd s \nonumber\\
&\quad\cdot\begin{pmatrix}
\sin\left(u+\phi\right) \\ -\cos\left(u+\phi\right)
\end{pmatrix}
=:\Pi^{-1}(u+\phi)\, \mathcal{J}(u, \rho) \cdot\begin{pmatrix}
\sin(u+\phi) \\ -\cos(u+\phi)
    \end{pmatrix}
\end{align}
for $(t,s)\in (0,T)\times\RR$ and, instead of $\lambda_{\rho}$, $\Lambda_\rho$ given by
\begin{align*}
\Lambda_\rho(u, \rho)(t):= \lambda_\rho(u+\phi, \rho)(t)=\frac{1}{2L}\int_0^L\beta'(\rho)\left(\partial_su+\partial_s \phi-c_0\right)^2\intd s,
\end{align*}
for $t \in (0,T)$, see \eqref{eq:lambdarho}.

We now define our concept of a solution. To that end, we work with time-weighted little Hölder spaces (see Appendix \ref{appfuncspace} for a precise definition), which provide an appropriate framework for the local well-posedness of the problem.

\begin{defn}
\label{defsol} 
Let $T>0$, $\alpha\in(0,1)$ and  $\eta\in\left(\frac{1}{2},1\right)$ be such that $2\eta+\alpha\not\in\ZZ$. Moreover, let $\left(u_0,\rho_0\right)\in h^{2\eta+\alpha}_L(\RR;\RR^2)$ be such that $u_0$ satisfies \eqref{eq:hypu}. We call $(u,\rho)$ a \emph{solution of \eqref{eq:flow eq u} on $(0,T)\times\RR$ with initial datum $(u_0, \rho_0)$} if 
\begin{align*}
    (u,\rho)\in\textup{BUC}^1_{1-\eta}\left([0,T^\prime];h^\alpha_L(\RR;\RR^2)\right)\cap\textup{BUC}_{1-\eta}\left([0,T^\prime];h^{2+\alpha}_L(\RR;\RR^2)\right)
\end{align*}
for all $T^\prime< T$, if $(u,\rho)$ solves the flow equations in \eqref{eq:flow eq u} on $(0,T)\times [0,L]$ and if
\begin{align*}
    \lim_{t\to0}(u,\rho)(t,\cdot)=(u_0,\rho_0)\quad \text{in }C^{2\eta+\alpha}(\RR;\R^2).
\end{align*}
\end{defn}

The time weight (described by the parameter $\eta$) allows us to consider less regular initial data. 
For $T>0$, $\alpha\in(0,1)$ and $\eta\in\left(\frac{1}{2},1\right)$ such that $2\eta+\alpha\not\in\ZZ$, we set
\begin{align}\label{eq:defEE0}
    &\quad\EE_0([0,T]):=\textup{BUC}_{1-\eta}\left([0,T];h^\alpha_L(\RR;\RR^k)\right),\\
    \label{eq:defEE1}
   &\quad\EE_1([0,T]):=\textup{BUC}_{1-\eta}^1\left([0,T]; h^{\alpha}_L(\RR;\RR^k)\right)\cap \textup{BUC}_{1-\eta}\left([0,T];h^{2+\alpha}_L(\RR;\RR^k)\right)
\end{align}
for $k=1,2$. For $k=2$, $\EE_1$ is the space of solutions in the sense of 
Definition \ref{defsol}. For $(u,\rho) \in \EE_1$ the right hand side of the evolution equations in \eqref{eq:flow eq u} belongs to $\EE_0$, cf.\ Remark \ref{rem:appbuc2}. We define $\gamma\EE_1:= h_L^{2\eta+\alpha}(\R;\R^k)$, the space for initial data (see \eqref{eq:tracespace}). In the following, the value of $k=1,2$ will be clear from the context.

We introduce the open set
\begin{align}
    \mathbb{U}([0,T]):=\big\lbrace (u, \rho) \in \EE_1([0,T]) : \inf_{[0,T]}\det \Pi(u+\phi)>0\big\rbrace
\end{align}
and define the nonlinear differential operator 
\begin{align}
    &\oF\colon \mathbb{U}([0,T])\to\EE_0([0,T]),\quad 
    \oF(u,\rho)=\left(\oF_1(u,\rho),\oF_2(u,\rho)\right),\\
\label{F1F2}    &\oF_1(u,\rho):=\partial_s\left[\beta(\rho)\left(\partial_su+\partial_s\phi-c_0\right)\right]+\Lambda_u(u,\rho),\phantom{\frac{1}{2}}\\
    &\oF_2(u,\rho):=\mu\,\partial_s^2\rho-\frac{1}{2}\beta'(\rho)\left(\partial_su+\partial_s\phi-c_0\right)^2+\Lambda_\rho(u,\rho).
\end{align}
The evolution equations in \eqref{eq:flow eq u} can be written as
$\displaystyle \partial_t( u,\rho )=\oF(u,\rho)$ on $(0,T)\times\RR.$
\begin{rem}
The choice $\eta>\frac{1}{2}$ ensures that the initial datum has finite energy, but also turns out to be essential to control the nonlinearities in the equation (cf.\  Lemma \ref{est:v-v0}, Proposition \ref{prop:sollinprobt}) and to show uniqueness (cf.\ Lemmas \ref{lem:unique}, \ref{lem:ds2u in L2}).
\end{rem}

\subsection{Short-time existence and continuous dependence}
\label{sec:ste&dep}

To prove short-time existence and continuous dependence on the data, we essentially follow the ideas of \cite[Theorem 2]{BCF2010}. However, since we work with periodic little H\"older spaces, we have to justify some steps differently.

\begin{prop}[Existence and continuous dependence]
\label{prop:existence} 
Let $\alpha\in(0,1)$ and $\eta\in\left(\frac{1}{2},1\right)$. Let $(u_0,\rho_0)\in \gamma\EE_1$ and let $u_0$ satisfy \eqref{eq:hypu}.
Then there exist $T_0 \in (0,\infty)$ and $r>0$ such that for every $(\tilde{u}_0,\tilde\rho_0)\in\gamma\EE_1$ with $\left\Vert (u_0,\rho_0)-(\tilde{u}_0,\tilde\rho_0)\right\Vert_{C^{2\eta+\alpha}([0,L])}<r$ the problem
\begin{align}
\label{eq:flow eq tildeu}
\begin{cases}
\displaystyle\partial_t\tilde{u}=\partial_s\left[\beta(\tilde\rho)\left(\partial_s\tilde{u}+\partial_s \phi-c_0\right)\right]+\Lambda_u(\tilde{u},\tilde\rho) &\textup{in } (0,T_0)\times{\RR},\\
\displaystyle\partial_t\tilde\rho=\mu\partial_s^2\tilde\rho-\frac{1}{2}\beta'(\tilde\rho)\left(\partial_s\tilde{u}+\partial_s\phi-c_0\right)^2+\Lambda_\rho(\tilde{u},\tilde\rho) &\textup{in } (0,T_0)\times{\RR}, \\
\tilde{u}(0,\cdot)=\tilde{u}_0,\qquad 
\tilde\rho(0,\cdot)=\tilde\rho_0 & \textup{on } {\RR},
\end{cases}
\end{align}
admits a solution $(\tilde{u},\tilde\rho)\in\EE_1([0,T_0])$.
Moreover, there is a $C^1$-map which associates to the initial datum a solution of the problem.
In particular, there exists a
solution $(u,\rho) \in \EE_1([0,T_0])$ of \eqref{eq:flow eq u} on $(0,T_0)\times\RR$ with initial datum $(u_0, \rho_0)$. 

\end{prop}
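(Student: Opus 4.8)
\textbf{Proof plan for Proposition \ref{prop:existence}.}

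The plan is to follow the strategy of \cite[Theorem 2]{BCF2010}, recasting the short-time existence as an application of the Inverse Function Theorem between the time-weighted little Hölder spaces. First I would introduce the map
\begin{align*}
    \Phi\colon \mathbb{U}([0,T_0])\to \EE_0([0,T_0])\times \gamma\EE_1, \qquad \Phi(u,\rho) = \big(\partial_t(u,\rho)-\oF(u,\rho),\, (u,\rho)(0,\cdot)\big),
\end{align*}
so that $(u,\rho)$ solves \eqref{eq:flow eq u} with initial datum $(u_0,\rho_0)$ precisely when $\Phi(u,\rho)=(0,(u_0,\rho_0))$. The crucial point is to show that $\Phi$ is $C^1$ (in fact continuously Fréchet differentiable) on the open set $\mathbb{U}([0,T_0])$ and that its Fréchet derivative $D\Phi(\bar u,\bar\rho)$ at a suitable reference point is a topological isomorphism. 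Once this is done, the Inverse Function Theorem yields an open neighborhood of $\Phi(\bar u,\bar\rho)$ on which $\Phi^{-1}$ exists and is $C^1$; restricting to the slice $\{0\}\times\gamma\EE_1$ and noting that the constant extension of $(u_0,\rho_0)$ provides a convenient reference point, this produces both the solution and the asserted $C^1$-dependence on the initial datum, for all $(\tilde u_0,\tilde\rho_0)$ within some radius $r$ in $C^{2\eta+\alpha}([0,L])$.

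The smoothness of $\Phi$ reduces to smoothness of the Nemytskii-type operators appearing in $\oF$. The semilinear terms $\beta(\rho)$, $\beta'(\rho)$, and the products with $\partial_s u$, $\partial_s^2\rho$ are handled by the Banach-algebra structure of $h^\alpha_L$ together with the standard fact that composition with a smooth function is smooth on little Hölder spaces; the maximal-regularity embeddings $\EE_1\hookrightarrow \textup{BUC}_{1-\eta}([0,T_0];h^{1+\alpha}_L)$ (cf.\ Remark \ref{rem:appbuc2}) guarantee that the top-order terms land in $\EE_0$. The nonlocal terms $\Lambda_u$ and $\Lambda_\rho$ are, by \eqref{eq:LambdaupI}, built from $\Pi^{-1}(u+\phi)$ and the integral functional $\mathcal{J}(u,\rho)$; since inversion of matrices is smooth away from $\det=0$—which is exactly why we restrict to $\mathbb{U}$—and $\mathcal{J}$ is polynomial in $(\partial_s u, \partial_s\phi)$ with $\beta(\rho)$ coefficients integrated in $s$, these are smooth as well. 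For the isomorphism property I would linearize at the constant-in-time extension $(\bar u,\bar\rho)(t):=(u_0,\rho_0)$: the principal part of $D\oF$ is the diagonal operator $(v,\sigma)\mapsto(\beta(\rho_0)\,\partial_s^2 v,\ \mu\,\partial_s^2\sigma)$, a normally elliptic second-order system with $L$-periodic coefficients, for which the maximal regularity result of \cite{lC2011} gives that $(\partial_t-\text{principal part},\ \text{trace})$ is an isomorphism $\EE_1\to\EE_0\times\gamma\EE_1$ on a short time interval. The remaining terms of $D\oF(\bar u,\bar\rho)$—lower-order differential terms and the nonlocal contributions—are, thanks to the embedding constants in the time-weighted spaces decaying as $T_0\to 0$, a small perturbation in operator norm; a Neumann-series argument then upgrades the isomorphism to $D\Phi(\bar u,\bar\rho)$ itself, after shrinking $T_0$.

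The main obstacle I anticipate is precisely the interplay between the nonlocal Lagrange-multiplier terms and the time-weighted little Hölder framework: one must verify that the maps $u\mapsto \Pi^{-1}(u+\phi)$ and $(u,\rho)\mapsto \mathcal{J}(u,\rho)$ are not merely pointwise-in-time smooth but smooth as maps between the Banach spaces $\EE_1$ and $\EE_0$, including the correct behavior near $t=0$ encoded by the weight $(1-\eta)$, and that they constitute a genuinely small (lower-order) perturbation so the perturbation argument closes. A secondary technical point is to confirm that the periodic little Hölder spaces $h^\alpha_L(\RR;\RR^k)$ enjoy the same maximal-regularity and interpolation properties used in \cite{BCF2010,lC2011} on the line or on bounded intervals; this is where we must "justify some steps differently," essentially by exploiting that periodicity makes the relevant operators act on a compact manifold (the circle $\RR/L\ZZ$), so the cited abstract theory applies verbatim. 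Finally, one checks that the constraint \eqref{eq:hypu} is compatible—i.e.\ it need not be preserved by the abstract IFT solution a priori, but the construction of $\Lambda_u$ is exactly what makes \eqref{eq:dt lambda theta} hold, so \eqref{eq:hypu} propagates in time and the solution produced does correspond to a closed curve, as claimed in the last sentence of the statement.
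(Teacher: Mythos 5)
The central gap in your plan is the choice of the reference point. You propose to linearize at the constant-in-time extension $(\bar u,\bar\rho)(t):=(u_0,\rho_0)$, but this function is generically \emph{not} an element of $\EE_1([0,T_0])$. Indeed, $\EE_1([0,T_0])$ requires $\xi(t)\in h^{2+\alpha}_L$ for every $t>0$, with $t^{1-\eta}\|\xi(t)\|_{C^{2+\alpha}}$ bounded. Since $\eta<1$, we only have $(u_0,\rho_0)\in h^{2\eta+\alpha}_L$ with $2\eta+\alpha<2+\alpha$, so the constant extension fails to lie in $\textup{BUC}_{1-\eta}([0,T_0];h^{2+\alpha}_L)$ — the time weight helps to cope with a blow-up as $t\to 0^+$, not with a function that is never in $C^{2+\alpha}$ at all. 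This also propagates into the linearization: $\textup{D}\oF(\bar u,\bar\rho)$ contains a term $\beta'(\bar\rho)\,\partial_s^2\bar u\,\sigma$ (cf.\ the computation leading to \eqref{DF1}), and $\partial_s^2 u_0$ is a priori only in $h^{2\eta+\alpha-2}_L$, strictly rougher than $h^\alpha_L$, so this coefficient cannot simply be ``frozen'' at $t=0$. The paper circumvents both problems by taking $(\bar u,\bar\rho)$ to be the solution of the heat equation \eqref{eq:heat eq} with initial datum $(u_0,\rho_0)$: this gives $(\bar u,\bar\rho)\in\EE_1([0,\bar T])\cap C^\infty((0,\bar T)\times\R)$, and the crucial extra regularity $\partial_s(\bar u,\bar\rho)\in\tilde\EE_1([0,\bar T])$ with weight $\tilde\eta=\eta-\tfrac12>0$ (obtained by applying \cite[Theorem 6.4]{lC2011} to the $s$-differentiated heat equation). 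This extra regularity is exactly what allows the term $\beta'(\bar\rho)\partial_s^2\bar u\,\sigma$ to be treated as a perturbation — it is placed in the inhomogeneous term $\Psi$ in the auxiliary problem of Proposition \ref{sollinprob} rather than in the frozen-coefficient operator $\sA$, and controlled via Lemma \ref{est:v-v0}.

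There is a second, less severe omission. You assert that the Inverse Function Theorem ``produces both the solution and the asserted $C^1$-dependence'' by restricting to the slice $\{0\}\times\gamma\EE_1$, but the IFT gives a diffeomorphism from a neighborhood $V$ of $(\bar u,\bar\rho)$ onto a neighborhood $W$ of $\Phi(\bar u,\bar\rho)=\big((\bar f_1,\bar f_2),(u_0,\rho_0)\big)$, and $(\bar f_1,\bar f_2)\neq (0,0)$. To land the argument, the paper does not shrink the time interval naively (that could shrink $W$ as well); rather, in Steps 3--4 it fixes $T'$ from the IFT, chooses $T''<T'$ so small that $\|(\bar f_1,\bar f_2)\|_{\EE_0([0,T''])}<r$, and then replaces $(\bar f_1,\bar f_2)$ by a cut-off $(\tilde f_1,\tilde f_2)\in\EE_0([0,T'])$ which vanishes on $[0,\tfrac12 T'']$ yet stays within distance $r$ of $(\bar f_1,\bar f_2)$ in $\EE_0([0,T'])$. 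Solving $\Phi=\big((\tilde f_1,\tilde f_2),(\tilde u_0,\tilde\rho_0)\big)$ and then restricting to $[0,\tfrac12 T'']$ gives the genuine solution, and the composition of $\Phi^{-1}$ with the restriction operator yields the claimed $C^1$-dependence on the initial datum. Your plan needs this intermediate bookkeeping to actually solve \eqref{eq:flow eq tildeu}, not merely a nearby inhomogeneous problem.
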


\begin{proof}
We divide the proof into several steps. For the sake of readability, the details are partially moved to Appendix \ref{sec:auxstep1}. We will refer to this at the respective points.

\textit{Step 1:} \textit{Construction of a ``good''~function.} We show that there exist $\bar{T}>0$ and $(\bar{u},\bar{\rho})\in \EE_1([0,\bar{T}])$ with $(\bar{u},\bar{\rho})(0, \cdot)=(u_0,\rho_0)$, such that $(\bar{u}, \bar{\rho})\in \mathbb{U}([0,\bar{T}])$, i.e.
\begin{equation}\label{eq:welldefdet}
\inf_{[0,\bar{T}]} \det\left(\Pi(\bar{u} + \phi)\right) >0, 
\end{equation} 
and having some extra regularity. More precisely, $\partial_s(\bar{u},\bar{\rho})\in
    \tilde\EE_1([0,\bar{T}])$ where
\begin{align}\label{eq:E1tilde}
    \tilde\EE_1([0,\bar{T}]):=\textup{BUC}_{1-\tilde\eta}^1\left([0,\bar{T}]; h^{\alpha}_L(\RR)\right)\cap \textup{BUC}_{1-\tilde\eta}\left([0,\bar{T}];h^{2+\alpha}_L(\RR)\right)
\end{align} 
for $\tilde\eta:=\eta-\frac{1}{2}>0$.\\
First, we take the solution $(\bar{u},\bar{\rho})\in\EE_1([0,\hat{T}])$, for $\hat{T}>0$ arbitrary, of 
\begin{align}
\label{eq:heat eq}
\begin{split}
    \begin{cases}
    \partial_t(v,\sigma)-\partial_s^2(v,\sigma)=(0,0) &\textup{ in } (0,\hat{T})\times\RR,\\
    (v,\sigma)(0,\cdot)=(u_0,\rho_0) &\textup{ on } \RR.
    \end{cases}
\end{split}
\end{align}
Such a solution exists by \cite[Theorem 6.4]{lC2011}. As solution of the heat equation, we know that $(\bar{u},\bar{\rho})\in C^\infty((0,\hat{T})\times\RR)$.
Due to the fact that $\eta>\frac{1}{2}$, 
$\partial_s(u_0,\rho_0)\in h^{2\tilde\eta+\alpha}_L(\RR)=\gamma\tilde\EE_1$ and hence it is an admissible initial datum. We look at 
\begin{align}
\label{eq:evprob lin ds}
\begin{split}
    \begin{cases}
    \partial_t(v,\sigma)-\partial_s^2(v,\sigma)=(0,0) &\textup{ in } (0,\hat{T})\times\RR,\\
    (v,\sigma)(0,\cdot)=\partial_s(u_0,\rho_0) &\textup{ on } \RR.
    \end{cases}
\end{split}
\end{align}
Again, we use \cite[Theorem 6.4]{lC2011} and obtain a unique solution in $\tilde\EE_1([0,\hat{T}])$ that is smooth for positive times. Since this solution coincides with $\partial_s(\bar{u},\bar\rho)$ for $t=0$ and $\partial_s(\bar{u},\bar\rho)$ solves the evolution equation in \eqref{eq:evprob lin ds}, we conclude by uniqueness that $\partial_s(\bar{u},\bar\rho)$ is the solution of \eqref{eq:evprob lin ds}, so that $\partial_s (\bar{u}, \bar{\rho})\in \tilde\EE_1([0,\hat{T
}])$.

It remains to check \eqref{eq:welldefdet}. Since $u_0+\phi$ satisfies Hypothesis \ref{hyp:intro}, using Remark \ref{rem:det=0 iff theta const} and Lemma \ref{appbuc1}, by continuity there exists $\bar{T}\leq \hat{T}$ such that \eqref{eq:welldefdet} is satisfied.

\textit{Step 2:} We claim that there exists $T^\prime\in(0,\bar{T}]$ such that the nonlinear operator
\begin{align}\label{eq:Frechet}
    &\Phi\colon \mathbb{U}([0,T'])\to\EE_0([0,T^\prime])\times\gamma\EE_1, \\ \nonumber
    &\Phi(\tilde{u},\tilde\rho):=\Big(\partial_t(\tilde{u},\tilde\rho)-\oF(\tilde{u},\tilde\rho),\; \left(\tilde{u},\tilde\rho\right)(0,\cdot)\Big)
\end{align}
is well-defined and a local diffeomorphism at $(\bar{u},\bar{\rho})$ (restricted to $[0,T^\prime]$). Here $\textup{\textbf{F}}$ is the map defined in \eqref{F1F2}. 

It is sufficient to prove
the existence of $T^\prime\in(0,\bar{T}]$ such that the Fréchet derivative 
\begin{align}
\label{eq:Frechet2}
\begin{split}
    &\textup{D}\Phi(\bar{u},\bar{\rho})\colon\EE_1([0,T^\prime])\to\EE_0([0,T^\prime])\times\gamma\EE_1, \\
    &\textup{D}\Phi(\bar{u},\bar{\rho})(v,\sigma):=\Big(\partial_t(v,\sigma)-\textup{D}\oF(\bar{u},\bar{\rho})(v,\sigma), (v, \sigma)(0,\cdot)\Big),
\end{split}
\end{align}
is a linear isomorphism. It then follows, since $\Phi\in C^1$, by the Local Inverse Function Theorem that $\Phi$ is a local diffeomorphism at $(\bar{u},\bar\rho)$.
The proof that the map in \eqref{eq:Frechet2} is a linear isomorphism is given in Proposition \ref{prop:sollinprobt} (with $\tilde\oJ = \textup{D}\Phi(\bar{u},\bar{\rho})$). The idea is to use a maximal regularity result in time-weighted little Hölder spaces \cite{lC2011}, where the time weight allows for rough initial data. Since the results in \cite{lC2011} are applicable to linear systems with time-independent coefficients, we first consider an auxiliary linear problem, cf.\ Proposition \ref{sollinprob}. A perturbation argument then gives that the map in \eqref{eq:Frechet2} is an isomorphism for small times.

\textit{Step 3:}
By Step 2, there is a neighborhood $V\subset \mathbb{U}([0, T'])$ of $(\bar{u},\bar{\rho})$ and a neighborhood $W\subset\EE_0([0,T^\prime])\times\gamma\EE_1$ of $\Phi(\bar{u},\bar{\rho})=:\big((\bar{f}_1,\bar{f}_2),(u_0,\rho_0)\big)$ such that $\Phi:V\to W$ is a diffeomorphism.
Choose $r>0$ small enough such that $B_{r}(\bar{f}_1,\bar{f}_2)\times B_r(u_0,\rho_0)\subset W$. Moreover, choose $T^{\prime\prime}\in(0,T^\prime)$ small enough so that 
\begin{equation}\label{eq:auf1}
\left\Vert(\bar{f}_1,\bar{f}_2)\right\Vert_{\EE_0([0,T''])}<r.
\end{equation}
Here it is crucial that the $\EE_0$-norm becomes arbitrary small for small times (see the definition of $\textup{BUC}_{1-\eta}$ in Section \ref{sec:BUC-spaces}).

\textit{Step 4:} 
Define $(\tilde{f}_1,\tilde{f}_2)\in\EE_0([0,T^\prime])$ by
\begin{align*}
    (\tilde{f}_1,\tilde{f}_2):=
    \begin{cases}
    (0,0) 
    &\textup{ on } \left[0,\frac12 T^{\prime\prime}\right],\\
    \frac{2}{T^{\prime\prime}} (t-\frac12 T^{\prime\prime})(\bar{f}_1,\bar{f}_2) 
    &\textup{ on } \left(\frac12 T^{\prime\prime},T^{\prime\prime}\right),\\
    (\bar{f}_1,\bar{f}_2) 
    &\textup{ on } \left[T^{\prime\prime},T^\prime\right].
    \end{cases}
\end{align*}
Then, by a short computation and the choice of $T''$ in \eqref{eq:auf1}, we have
\begin{align*}
    &\left\Vert (\tilde{f}_1,\tilde{f}_2)-(\bar{f}_1,\bar{f}_2)\right\Vert_{\EE_0([0,T^\prime])}
    <r.
\end{align*}
This implies that, with $(\tilde{u}_0,\tilde\rho_0)\in\gamma\EE_1$ as in the statement, we have
\begin{align*}
    \big((\tilde{f}_1,\tilde{f}_2),(\tilde{u}_0,\tilde\rho_0)\big)\in B_{r}(\bar{f}_1,\bar{f}_2)\times B_r(u_0,\rho_0)\subset W.
\end{align*}
Since $\Phi:V\to W$ is a diffeomorphism, there exists $(\tilde{u},\tilde{\rho})\in V\subset \mathbb{U}([0,T'])$ such that $\Phi(\tilde{u},\tilde\rho)=\big((\tilde{f}_1,\tilde{f}_2),(\tilde{u}_0,\tilde\rho_0)\big)$.
We restrict $(\tilde{u},\tilde{\rho})$ to $\big[0,\frac12 T^{\prime\prime}\big]\times\RR$, set $T_0:=\frac12 T^{\prime\prime}$ and obtain a solution of \eqref{eq:flow eq tildeu} 
on $(0,T_0)\times\RR$. 

\textit{Step 5:} 
We constructed a mapping which maps every $(\tilde{u}_0,\tilde\rho_0)\in B_r(u_0,\rho_0)$ to a solution $(\tilde{u},\tilde\rho)\in\EE_1([0,T_0])$ of the problem. It consists essentially of the composition of the $C^1$-mapping $\Phi^{-1}$ and a restriction operator and is thus continuously differentiable.
\end{proof}

\begin{rem}
\label{rem:contdep}
In the following section we prove uniqueness of solutions. Together with Proposition \ref{prop:existence} this yields the continuity --- or more precisely even the continuous differentiability --- of the mapping which maps any initial datum $({u}_0,\rho_0)\in\gamma\EE_1$ to the unique solution $({u},\rho)\in\EE_1([0,T_0])$ of \eqref{eq:flow eq u}.
\end{rem}

\subsection{Smoothness and uniqueness of solutions}
\label{sec:unique&smooth}

We now show that a solution is smooth for positive times. 

\begin{lem}[Smoothing]
\label{smoothing}
Let $0<T\leq\infty$ and $(u,\rho)$ be a solution, in the sense of Definition \ref{defsol}, 
 of \eqref{eq:flow eq u} 
on $(0,T)\times\RR$. 
Then $(u,\rho)\in C^\infty((0,T)\times\RR).$
\end{lem}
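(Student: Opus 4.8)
The strategy is a bootstrap argument based on parabolic Schauder (Hölder) regularity theory for linear second-order systems, as found in \cite{LSU1988}. Fix $0 < t_0 < T$ and a slightly smaller initial time; by the assumption $(u,\rho) \in \EE_1([0,T'])$ for all $T' < T$ together with the time-weighted structure of the spaces $\textup{BUC}_{1-\eta}$, we know that for any $\delta > 0$ the solution belongs to $C^{2\eta+\alpha}$ in space and is Hölder continuous in time on $[\delta, T']\times\RR$, uniformly. In particular the coefficients appearing in the two evolution equations in \eqref{eq:flow eq u}---namely $\beta(\rho)$ and $\beta'(\rho)$, which multiply the highest-order terms, and the lower-order data built from $\partial_s u$, $\partial_s \rho$, $\phi$ and the nonlocal multipliers $\Lambda_u(u,\rho)$, $\Lambda_\rho(u,\rho)$---are already Hölder continuous on $[\delta,T']\times\RR$. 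Note that $\beta(\rho) \geq c > 0$ on compact time intervals by continuity, so the system is uniformly parabolic, and it is in fact \emph{diagonal} at leading order (the $u$-equation has principal part $\beta(\rho)\partial_s^2 u$ and the $\rho$-equation has principal part $\mu \partial_s^2\rho$), with the coupling entering only through lower-order terms; this makes the linear theory of \cite{LSU1988} directly applicable.

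The bootstrap then proceeds as follows. Treating the right-hand sides as known Hölder functions, interior parabolic Schauder estimates upgrade $(u,\rho)$ to $C^{2+\alpha', 1+\alpha'/2}_{\mathrm{loc}}$ on $(\delta, T')\times\RR$ for a suitable Hölder exponent (using periodicity in space to turn "interior" into "global in space"). Now $\partial_s u, \partial_s \rho \in C^{1+\alpha'}$ in space, so $\beta(\rho), \beta'(\rho) \in C^{1+\alpha'}$ (smoothness of $\beta$), the quadratic term $\beta'(\rho)(\partial_s u + \partial_s\phi - c_0)^2$ gains a derivative, and the nonlocal terms $\Lambda_u, \Lambda_\rho$, being integrals of such quantities against smooth trigonometric functions divided by the now-$C^1$-in-time quantity $\det\Pi(u+\phi) > 0$, are correspondingly more regular in $(t,s)$. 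Differentiating the equations in space once, $\partial_s(u,\rho)$ solves a linear parabolic system of the same structure with coefficients and inhomogeneity one Hölder order better, so Schauder gives $(u,\rho) \in C^{3+\alpha'}$ in space on a slightly smaller cylinder; iterating yields $(u,\rho) \in C^\infty$ in space, locally uniformly in $t$ on $(0,T)$. Finally, one reads off time-regularity from the equations: $\partial_t(u,\rho)$ equals a spatially smooth expression, and differentiating the equations in $t$ and applying parabolic regularity once more (or using that for linear parabolic equations smoothness in $x$ of all data plus the equation forces joint smoothness) gives $(u,\rho) \in C^\infty((0,T)\times\RR)$. Since $\delta > 0$ was arbitrary, this holds on all of $(0,T)\times\RR$.

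The main technical obstacle is the first step: extracting enough initial Hölder regularity in both space \emph{and time} from the definition of a solution to start the Schauder machine. The space $\textup{BUC}_{1-\eta}^1([0,T']; h^\alpha_L) \cap \textup{BUC}_{1-\eta}([0,T']; h^{2+\alpha}_L)$ gives spatial regularity $h^{2+\alpha}$ and a bounded (weighted) time derivative in $h^\alpha$, which by interpolation yields parabolic Hölder continuity of $(u,\rho)$ on $[\delta,T']$ away from $t=0$; one must check that this is precisely the regularity class in which the coefficient and forcing hypotheses of the Schauder estimates of \cite{LSU1988} are met, and that the nonlocal multiplier terms---which involve the inverse matrix $\Pi^{-1}(u+\phi)$ and hence require $\det\Pi(u+\phi)$ bounded away from zero, guaranteed here since the solution lies in $\mathbb{U}$---do not destroy this structure. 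Once the iteration is primed, each subsequent step is routine.
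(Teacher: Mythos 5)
Your plan is correct and follows essentially the same parabolic-Schauder bootstrap as the paper's proof, with two mild variations in mechanism. Where you appeal directly to interior parabolic Schauder estimates (using periodicity to globalize), the paper instead localizes with a smooth cut-off $\psi$ vanishing near $t=0$ and outside a spatial interval, so that $\psi u$ and $\psi\rho$ solve initial-boundary-value problems with zero data; it then invokes the existence-uniqueness-regularity theorem of \cite{LSU1988} (Theorem IV.5.2) for such IBVPs and identifies $\psi u$ with the regular solution by uniqueness. The cut-off device avoids the need for a separate interior-estimate formulation and is what lets one cite a single clean theorem. Secondly, where you bootstrap by differentiating the equations in $s$ and treating the differentiated system, the paper simply re-applies the same theorem to the undifferentiated cut-off functions, observing that once $u,\rho$ gain two Hölder orders, the right-hand sides $f_k$ (which depend only on $u,\rho$ and their first spatial derivatives, plus the nonlocal multipliers) gain one Hölder order, so each iteration lifts the regularity by one; this sidesteps the bookkeeping of rewriting a differentiated system. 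Both routes are sound. Your identification of the main technical obstacle --- extracting parabolic Hölder continuity in $(t,s)$ away from $t=0$ from the $\textup{BUC}_{1-\eta}^1\cap\textup{BUC}_{1-\eta}$ solution class --- is exactly right; the paper isolates this as a separate lemma (Lemma~\ref{E1BUC}) proving $\partial_s u, \partial_s\rho\in H^{\zeta/2,\zeta}$ with $\zeta=\eta+\alpha/2-1/2$, which is the precise hypothesis needed to prime the Schauder iteration.
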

The idea is to define a smooth cut-off function, which vanishes at $t=0$ and outside of a spatial interval of length larger than $L$. The product of $u$ with this cut-off function is the solution of a new initial value problem --- this time with zero boundary values and zero initial data. This enables us to use \cite[Theorem IV.5.2]{LSU1988}, which gives us higher regularity of $u$ in the scale of parabolic H\"older spaces, cf.\ Appendix \ref{app:parhoelder}. We proceed similarly for $\rho$ and iterate this procedure to conclude smoothness.

\begin{proof}[Proof of Lemma \ref{smoothing}]
\!By Definition \ref{defsol} the solution $(u,\rho)$ satisfies $(u,\rho)\!\in\EE_1([0,T^\prime])$ for all $T^\prime<T$.
Let $0<\delta<T^\prime<T$ and let $\psi_1:[0,T]\times\RR\to\RR$ be a smooth cut-off function satisfying
\begin{align*}
    \psi_1(t,s)&\equiv 1 \textup{  for  } (t,s)\in\left[ \nicefrac{\delta}{2},T^\prime\right]
    \times\left[-\nicefrac{L}{4},\nicefrac{5L}{4}\right],\\
    \psi_1(t,s)&\equiv 0 \textup{  for  } (t,s)\in \left(\left[0,\nicefrac{\delta}{4}\right]\times\RR\right)\cup\left(\left[0,T^\prime\right]\times \left(\RR\setminus\left[-\nicefrac{L}{2},\nicefrac{3L}{2}\right]\right)\right).
\end{align*}
We define $u_1:=\psi_1 u\in\EE_1([0,T^\prime])$. By \eqref{eq:flow eq u} we have 
 \begin{align*}
    \partial_tu_1-a_{11}\partial_s^2u_1+a_1\partial_su_1=f_1,
 \end{align*}
where $a_{11}(t,s):=\beta(\rho)$, $a_1(t,s):=-\beta^\prime(\rho)\partial_s\rho,$ and
\begin{align*}
    f_1(t,s):=&-2\beta(\rho)\partial_su\partial_s\psi_1-\beta(\rho)u\partial_s^2\psi_1-\beta^\prime(\rho)u\partial_s\rho \partial_s\psi_1+\psi_1\beta^\prime(\rho)\partial_s\rho\left(\partial_s\phi-c_0\right)\\
    &\;+\psi_1\Lambda_u(u,\rho)+u\partial_t\psi_1.
\end{align*}
Defining $\zeta:=\eta+\frac{\alpha}{2}-\frac{1}{2}\in(0,1)$, Lemma \ref{E1BUC} in the appendix yields
\allowdisplaybreaks\begin{align*}
    \partial_su,\partial_s\rho\in H^{\frac{\zeta}{2},\zeta}\left([0,T^\prime]\times[-L,2L]\right).
\end{align*}
Since $\beta\in C^\infty(\RR)$, we conclude 
$a_{11},a_1,f_1,\in H^{\frac{\zeta}{2},\zeta}\left([0,T^\prime]\times[-L,2L]\right)$.
By \cite[Theorem IV.5.2]{LSU1988} there exists a unique solution $w \in H^{\frac{\zeta+2}{2},\zeta+2}\left([0,T^\prime]\times[-L,2L]\right)$ of
\begin{align}\label{eq:system Solonnikov}
    \begin{cases}
    \partial_tw-a_{11}\partial_s^2w+a_1\partial_sw=f_1\quad &\textup{ in } (0,T^\prime)\times(-L,2L),\\
    w=0 \quad& \textup{ on } (0,T^\prime)\times\left\lbrace-L,2L\right\rbrace,\\
    w(0,\cdot)=0 \quad& \textup{ on } (-L,2L).
    \end{cases}
\end{align}
By uniqueness $u_1=w\in H^{\frac{\zeta+2}{2},\zeta+2}\left([0,T^\prime]\times[-L,2L]\right)$. 
Analogously for $\rho$, we obtain
\begin{align*}
    u,\rho\in H^{\frac{\zeta+2}{2},\zeta+2}\left(\left[\nicefrac{\delta}{2},T^\prime\right]\times\left[-\nicefrac{L}{4},\nicefrac{5L}{4}\right]\right).
\end{align*}    

Since $u$ and $\rho$ are periodic with period length $L$ we even have
\begin{align}\label{eq:smoothing1}
    u,\rho\in H^{\frac{\zeta+2}{2},\zeta+2}\left(\left[\nicefrac{\delta}{2},T^\prime\right]\times\left[-L,2L\right]\right).
\end{align}  
Let $\psi_2:[0,T]\times\RR\to\RR$ be another smooth cut-off function satisfying
\begin{align*}
    \psi_2(t,s)&\equiv 1 \textup{  for  } (t,s)\in\left[\nicefrac{3\delta}{4},T^\prime\right]
    \times\left[-\nicefrac{L}{4},\nicefrac{5L}{4}\right],\\
    \psi_2(t,s)&\equiv 0 \textup{  for  } (t,s)\in \left(\left[0,\nicefrac{2\delta}{3}\right]\times\RR\right)\cup\left(\left[0,T^\prime\right]\times\RR\setminus\left[-\nicefrac{L}{2},\nicefrac{3L}{2}\right]\right).
\end{align*}
We set 
$
u_2:=\psi_2u 
\in H^{\frac{\zeta+2}{2},\zeta+2}\left(\left[\nicefrac{\delta}{2},T^\prime\right]\times[-L,2L]\right)\!.
$
Now, $u_2$ solves an initial boundary value problem similar to \eqref{eq:system Solonnikov} where $f_2$ is given by $f_1$ with $\psi_1$ replaced by $\psi_2$ and where the initial condition is prescribed at $t=\nicefrac{\delta}{2}$. By \eqref{eq:smoothing1}, we find that $a_{11},a_1,f_2\in H^{\frac{\zeta+1}{2},\zeta+1}\left(\left[\nicefrac{\delta}{2},T^\prime\right]\times[-L,2L]\right)$, so by \cite[Theorem IV.5.2]{LSU1988}, we conclude
$
u_2\in H^{\frac{\zeta+3}{2},\zeta+3}\left(\left[\nicefrac{\delta}{2},T^\prime\right]\times[-L,2L]\right)\!.
$
Proceeding similarly for $\rho$, we find
\begin{align*}
    u,\rho\in H^{\frac{\zeta+3}{2},\zeta+3}\left(\left[\nicefrac{3\delta}{4},T^\prime\right]\times[-L,2L]\right).
\end{align*}
Iterating the above procedure, by periodicity and since $0<\delta<T'<T$ were arbitrary, it follows $(u, \rho)\in C^\infty((0,T)\times\RR)$.
\end{proof}

We are now able to prove that solutions are unique. 

\begin{lem}[Uniqueness]
\label{lem:unique}
Let $(u_0,\rho_0)\in\gamma\EE_1$.
Let $0<T_1,T_2\leq\infty$
and let $(u_1,\rho_1)$ and $(u_2,\rho_2)$ be {solutions of \eqref{eq:flow eq u} 
on $(0,T_1)\times\RR$ and $(0,T_2)\times\RR$ with initial datum $(u_0, \rho_0)$}. Then $(u_1,\rho_1)\equiv(u_2,\rho_2)$ on $\left[0,\min\lbrace T_1,T_2\rbrace\right)$.
\end{lem}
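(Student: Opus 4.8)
The plan is to prove uniqueness by an energy argument, exploiting the dissipative ($L^2$-gradient flow) structure highlighted in \eqref{eq:energy decay}. First I would fix $T < \min\{T_1, T_2\}$ and note that by Lemma \ref{smoothing} both solutions are smooth on $(0,T]\times\RR$, so all the manipulations below are justified for positive times; the initial condition is attained in $C^{2\eta+\alpha}(\RR;\R^2)$, in particular in $C^1$, which controls the relevant quantities as $t\to 0$. Set $v := u_1 - u_2$ and $\sigma := \rho_1 - \rho_2$, both $L$-periodic. The goal is to derive a differential inequality of Gronwall type for the quantity
\begin{align*}
    E(t) := \frac12 \int_0^L \big( v(t,s)^2 + \sigma(t,s)^2 \big) \intd s,
\end{align*}
of the form $\frac{\intd}{\intd t} E(t) \leq C(t)\, E(t)$ with $C \in L^1_{\mathrm{loc}}([0,T))$, together with $E(t)\to 0$ as $t\to 0$; Gronwall's lemma then forces $E \equiv 0$ on $[0,\min\{T_1,T_2\})$.

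The key step is to compute $\frac{\intd}{\intd t} E(t)$ by subtracting the two evolution equations in \eqref{eq:flow eq u}, multiplying the $v$-equation by $v$ and the $\sigma$-equation by $\sigma$, integrating over $[0,L]$, and integrating by parts using periodicity (no boundary terms appear). For the $\sigma$-equation, the principal part $\mu\,\partial_s^2\rho$ yields the good term $-\mu\int_0^L (\partial_s\sigma)^2\intd s \leq 0$. For the $v$-equation, the principal part $\partial_s[\beta(\rho)(\partial_s u + \partial_s\phi - c_0)]$ is quasilinear: writing $\beta(\rho_1)\partial_s u_1 - \beta(\rho_2)\partial_s u_2 = \beta(\rho_1)\partial_s v + (\beta(\rho_1) - \beta(\rho_2))\partial_s u_2$, after integration by parts the leading contribution is $-\int_0^L \beta(\rho_1)(\partial_s v)^2 \intd s \leq -c_\beta \int_0^L (\partial_s v)^2\intd s$ for a positive constant $c_\beta$ coming from $\beta > 0$ and the (uniform in time on $[0,T]$, by $C^1$-continuity up to $t=0$) bounds on $\rho_1, \rho_2$; the remaining terms involve differences like $\beta(\rho_1) - \beta(\rho_2)$, $\beta'(\rho_1) - \beta'(\rho_2)$, and $\partial_s u_1 - \partial_s u_2$, which by the mean value theorem and the smoothness/uniform $C^1$-bounds of the two solutions are controlled pointwise by $C(|v| + |\sigma| + |\partial_s v|)$. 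One then absorbs the $\partial_s v$ and $\partial_s \sigma$ contributions into the good negative terms via Young's inequality, at the cost of enlarging the constant multiplying $\int_0^L(v^2 + \sigma^2)$.

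The subtle points, and the main obstacle, are the nonlocal Lagrange multiplier terms $\Lambda_u(u_i,\rho_i)$ and $\Lambda_\rho(u_i,\rho_i)$. One must show that $\|\Lambda_u(u_1,\rho_1) - \Lambda_u(u_2,\rho_2)\|_{L^2(0,L)}$ and $|\Lambda_\rho(u_1,\rho_1) - \Lambda_\rho(u_2,\rho_2)|$ are bounded by $C(t)\,(\|v\|_{H^1} + \|\sigma\|_{H^1})$ with an $L^1_{\mathrm{loc}}$-in-time constant. Using the representation \eqref{eq:LambdaupI}, $\Lambda_u = \Pi^{-1}(u+\phi)\,\mathcal{J}(u,\rho)\cdot(\sin(u+\phi), -\cos(u+\phi))^\top$: the factor $\Pi^{-1}$ is Lipschitz in $u$ in the uniform norm on $\mathbb{U}([0,T])$ (since $\det\Pi$ is bounded away from zero along both solutions, by Remark \ref{rem:det=0 iff theta const} and continuity up to $t=0$), the trigonometric factors are Lipschitz in $u$, and $\mathcal{J}(u,\rho) = \int_0^L (\cos(u+\phi),\sin(u+\phi))(\partial_s u + \partial_s\phi)\beta(\rho)(\partial_s u + \partial_s\phi - c_0)\intd s$ is Lipschitz in $(u,\rho)$ with respect to the $C^1\times C^0$-norm, using the uniform $C^1$-bounds again — this reduces the difference of the nonlocal terms to $\|v\|_{C^1} + \|\sigma\|_{C^0}$, and then to $\|v\|_{H^1} + \|\sigma\|_{L^2}$ after bounding $\|v\|_{C^1}$ is \emph{not} available directly (only $\|v\|_{C^0}$ and $\|\partial_s v\|_{L^2}$ are); so one keeps the $\|\partial_s v\|_{L^2}$ dependence explicit and again absorbs it via Young into the good term. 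The difficulty is bookkeeping: verifying that every constant appearing is either time-independent on $[0,T]$ or at worst integrable near $t=0$ — here the uniform $C^1$-bounds, which follow from $(u_i,\rho_i)(t) \to (u_0,\rho_0)$ in $C^{2\eta+\alpha} \hookrightarrow C^1$ as $t\to 0$ together with continuity on $(0,T]$, are what makes all constants genuinely bounded on $[0,T]$, so $C(t)$ is in fact bounded and Gronwall applies cleanly. Finally, $E(t)\to 0$ as $t\to 0$ because both $u_i(t)\to u_0$ and $\rho_i(t)\to\rho_0$ in $C^0(\RR)$, hence $v(t), \sigma(t)\to 0$ uniformly, so $E \equiv 0$, i.e.\ $(u_1,\rho_1) \equiv (u_2,\rho_2)$ on $[0,\min\{T_1,T_2\})$.
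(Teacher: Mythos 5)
Your proposal is correct and takes a genuinely different (in fact, simpler) route than the paper. The paper's proof uses the $H^1$-type quantity
\begin{align*}
    \varphi(t)=\frac{1}{2}\int_0^L\big((\partial_su_d)^2+(\partial_s\rho_d)^2+u_d^2+\rho_d^2\big)\intd s,
\end{align*}
differentiates, and after integration by parts obtains $-\int_0^L\beta(\rho_1)(\partial_s^2u_d)^2\intd s$ as the coercive term. Because the quasilinear part is decomposed inside that pairing, terms like $\int_0^L\partial_s^2u_d(\beta(\rho_1)-\beta(\rho_2))\partial_s^2u_2\intd s$ appear, so the final Gronwall constant contains $\int_0^L(\partial_s^2u_2)^2\intd s$. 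This is only $L^1$ in time near $t=0$, which is exactly why the paper needs Lemma \ref{lem:ds2u in L2} (and hence $\eta>\frac{1}{2}$ in that specific way). Your $L^2$-based energy $E(t)=\frac12\int_0^L(v^2+\sigma^2)\intd s$ pairs the $v$-equation with $v$ and, after one integration by parts, gives $-\int_0^L\beta(\rho_1)(\partial_sv)^2\intd s$ as the good term. The crucial observation — which you state correctly — is that after the split $\beta(\rho_1)(\partial_su_1+\partial_s\phi-c_0)-\beta(\rho_2)(\partial_su_2+\partial_s\phi-c_0)=\beta(\rho_1)\partial_sv+(\beta(\rho_1)-\beta(\rho_2))(\partial_su_2+\partial_s\phi-c_0)$, only first derivatives of $u_2,\rho_2$ enter, and these are \emph{uniformly} bounded on $[0,T]$ by Lemma \ref{appbuc1} (here $\eta>\frac12$ is still used, but only for the $C^1$-bound up to $t=0$). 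The $\partial_sv$-factor arising from the nonlocal $\mathcal{J}$-difference only appears linearly after Cauchy--Schwarz and is absorbed via Young into the coercive term. So your Gronwall constant is actually bounded, not merely $L^1_{\mathrm{loc}}$, and Lemma \ref{lem:ds2u in L2} is not needed at all. One minor bonus you did not exploit explicitly: since the total masses coincide, $\int_0^L\sigma\intd s=0$, so the $\Lambda_\rho$-difference (which is constant in $s$) contributes zero when paired with $\sigma$. One small slip in the write-up: in your splitting of the quasilinear term you dropped the $\partial_s\phi-c_0$ piece; it should read $(\beta(\rho_1)-\beta(\rho_2))(\partial_su_2+\partial_s\phi-c_0)$, but this changes nothing since the omitted factor is uniformly bounded. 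In short: same dissipative/Gronwall philosophy, but your choice of $L^2$ over $H^1$ avoids the second-derivative terms, removes the need for the $L^2_tL^2_x$-bound on $\partial_s^2u$, and yields an arguably cleaner argument.
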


\begin{proof}
Without loss of generality we assume $0<T_1\leq T_2$. Let $0<T<T_1$.
We show that the difference
$(u_d,\rho_d):=(u_1,\rho_1)-(u_2,\rho_2)\vert_{[0,T]}$
of the two solutions is zero 
using Gronwall's lemma. By Lemma \ref{smoothing} $(u_d, \rho_d)\in C^\infty((0,T]\times \RR)$, so
\begin{align*}
    \varphi(t):=\frac{1}{2}\int_0^L\left(\left(\partial_su_d\right)^2+\left(\partial_s\rho_d\right)^2+u_d^2+\rho_d^2\right)\intd s, \quad t\in (0,T],
\end{align*}
is differentiable.
In the following, $C$ denotes a constant, only depending on $u_1$, $u_2$, $\rho_1$, $\rho_2$, $T$, and the model parameters $\beta, c_0, L, \mu, \omega$, which may change from line to line. Other dependencies are given explicitly. Lemma \ref{appbuc1} and Remark \ref{rem:appbuc1} yield
\allowdisplaybreaks\begin{align}\label{eq:bounds uniqueness}
    \qquad\sup_{t\in[0,T]} \left\Vert (u_i, \rho_i)\right\Vert_{C^1}\leq C,\quad \sup_{[0,T]\times\RR} |\beta^{(k)}(\rho_i)| \leq C,\quad  \sup_{[0,T]} \left\Vert \Pi^{-1}(u_i+\phi)\right\Vert\leq C
\end{align}
for $i=1,2$, $k=0,1,2$.
In particular, there exists a compact interval $J\subset \R$ such that $\rho_i(t,s)\in J$ for all $(t,s)\in [0,T]\times \RR$ and $i=1,2$. Consequently, we have 
\allowdisplaybreaks\begin{align}\label{eq:beta bound below}
\textstyle
    \inf_{[0,T]\times \RR} \beta(\rho_i)\geq c>0\quad \text{for }i=1,2.
\end{align}
After integration by parts and using $\int_0^L\rho_d\intd s=0$ by \eqref{eq:dt lambda rho}, we get
\begin{align}
\label{I1+I2+J2}
\allowdisplaybreaks
    \frac{\intd}{\intd t}\varphi(t)
    &= -\int_0^L \beta(\rho_1)(\partial_s^2u_d)^2\intd s - \mu \int_0^L (\partial_s^2 \rho_d)^2 \intd s- \mu\int_0^L (\partial_s\rho_d)^2\intd s\nonumber\\ 
    & - \int_0^L \partial_s^2u_d\left(\beta(\rho_1)-\beta(\rho_2)\right)\partial_s^2u_2\intd s\nonumber
    \\
    &-\int_0^L\partial_s^2u_d\,\Big(\beta^\prime(\rho_1)\partial_s\rho_1\left(\partial_su_1+\partial_s\phi-c_0\right)+\Lambda_u(u_1,\rho_1)  \nonumber\\
    &\qquad\qquad\quad-\beta^\prime(\rho_2)\partial_s\rho_2\left(\partial_su_2+\partial_s\phi-c_0\right)-\Lambda_u(u_2,\rho_2)\Big)\intd s \nonumber\\
    &+\int_0^L\partial_s^2\rho_d\,\Big(\frac{1}{2}\beta^\prime(\rho_1)\left(\partial_su_1+\partial_s\phi-c_0\right)^2  -\frac{1}{2}\beta^\prime(\rho_2)\left(\partial_su_2+\partial_s\phi-c_0\right)^2\Big)\intd s \nonumber\\
    &+\int_0^Lu_d\,\Big(\beta(\rho_1)\partial_s^2u_1+\beta^\prime(\rho_1)\partial_s\rho_1\left(\partial_su_1+\partial_s\phi-c_0\right)+\Lambda_u(u_1,\rho_1)  \nonumber\\
    &\qquad\qquad\quad-\beta(\rho_2)\partial_s^2u_2-\beta^\prime(\rho_2)\partial_s\rho_2\left(\partial_su_2+\partial_s\phi-c_0\right)-\Lambda_u(u_2,\rho_2)\Big)\intd s \nonumber\\
    &+\int_0^L\rho_d\,\Big(-\frac{1}{2}\beta^\prime(\rho_1)\left(\partial_su_1+\partial_s\phi-c_0\right)^2 +\frac{1}{2}\beta^\prime(\rho_2)\left(\partial_su_2+\partial_s\phi-c_0\right)^2\Big)\intd s.
\end{align}
We estimate everything but the first three terms 
using Young's inequality to absorb higher order derivatives and bound the remaining ones by $\varphi(t)$ using \eqref{eq:bounds uniqueness}.
Indeed, since $\int_0^L\rho_d\intd s=0$, for each $t$ there is $s_t\in [0,L]$ with $\rho_d(t,s_t)=0$ by the Mean Value Theorem for Integrals. The Fundamental Theorem of Calculus thus yields
\begin{align}\label{eq:trick FTC}
    |\rho_d(t,s)|\leq \int_0^L |\partial_s\rho_d|\intd s.
\end{align}
Hence, the term on the second line in \eqref{I1+I2+J2} can be estimated by a constant times
\begin{align}
    \sup_{\RR}|\rho_d| \int_0^L |\partial_s^2u_d| |\partial_s^2u_2|\intd s \leq \delta\int_0^L (\partial_s^2 u_d)^2\intd s + C(\delta)\int_0^L (\partial_s^2u_2)^2\intd s \int_0^L(\partial_s\rho_d)^2 \intd s.
\end{align}
In the third line of \eqref{I1+I2+J2}, we add, subtract, and use Young's inequality to find
\allowdisplaybreaks\begin{align}
    &-\int_0^L\partial_s^2u_d\left(\beta^\prime(\rho_1)\partial_s\rho_1\left(\partial_su_1+\partial_s\phi-c_0\right)-\beta^\prime(\rho_2)\partial_s\rho_2\left(\partial_su_2+\partial_s\phi-c_0\right)\right)\!\intd s\\
    =&-\int_0^L\partial_s^2u_d\beta^\prime(\rho_1)\partial_s\rho_d\left(\partial_su_1+\partial_s\phi-c_0\right)\intd s-\int_0^L\partial_s^2u_d\beta^\prime(\rho_1)\partial_s\rho_2\partial_su_d\intd s\\
    &- \int_0^L\partial_s^2u_d\left(\beta^\prime(\rho_1)-\beta^\prime(\rho_2)\right)\partial_s\rho_2\left(\partial_su_2+\partial_s\phi-c_0\right)\intd s\\
&\qquad\leq
    \delta\int_0^L\left(\partial_s^2u_d\right)^2\intd s
    +C(\delta)\bigg(\int_0^L\left(\partial_s\rho_d\right)^2+\left(\partial_su_d\right)^2+\rho_d^2\intd s\bigg). \label{eq:gelato1}
\end{align}
On the other hand, for the term in the third line of \eqref{I1+I2+J2} involving $\Lambda_u$ we have
\begin{align}
    &-\int_0^L\partial_s^2u_d\left(\Lambda_u(u_1,\rho_1)-\Lambda_u(u_2,\rho_2)\right)\intd s\nonumber\\
    =&-\int_0^L\partial_s^2u_d\,
        \bigg[
        \Pi^{-1}\left(u_1+\phi\right)\,\mathcal{J}(u_1, \rho_1)\cdot\begin{pmatrix}
        \sin\left(u_1+\phi\right) \\ -\cos\left(u_1+\phi\right)
        \end{pmatrix}\nonumber\\
        &\qquad\qquad\quad -\Pi^{-1}\left(u_2+\phi\right)\,\mathcal{J}(u_2, \rho_2)\cdot\begin{pmatrix}
        \sin\left(u_2+\phi\right) \\ -\cos\left(u_2+\phi\right)
        \end{pmatrix}
        \bigg]
        \intd s.\label{eq:gelato0}
\end{align}    
We write the term in square brackets as
\allowdisplaybreaks\begin{align*}    
    &\left[
        \Pi^{-1}\left(u_1+\phi\right)-\Pi^{-1}\left(u_2+\phi\right)\right]\,\mathcal{J}(u_1, \rho_1)
        \cdot\begin{pmatrix}
        \sin\left(u_1+\phi\right) \\ -\cos\left(u_1+\phi\right)
        \end{pmatrix}
        \\
    &\quad+
        \Pi^{-1}\left(u_2+\phi\right)\left(\mathcal{J}(u_1,\rho_1)-\mathcal{J}(u_2, \rho_2)\right)
        \cdot\begin{pmatrix}
        \sin\left(u_1+\phi\right) \\ -\cos\left(u_1+\phi\right)
        \end{pmatrix}
        \\  
    &\quad+
        \Pi^{-1}\left(u_2+\phi\right) \mathcal{J}(u_2, \rho_2)
        \cdot\begin{pmatrix}
        \sin\left(u_1+\phi\right)-\sin\left(u_2+\phi\right) \\ -\cos\left(u_1+\phi\right)+\cos\left(u_2+\phi\right)
        \end{pmatrix}.
\end{align*}

Using the explicit formula for the inverse matrix, it can be shown that
\begin{align}
    \left\Vert \Pi^{-1}(u_1+\phi)-\Pi^{-1}(u_2+\phi)\right\Vert \leq C \int_0^L |u_d|\intd s.
\end{align}
Proceeding as in \eqref{eq:gelato1}, after adding and subtracting, we have
\begin{align}
    |\mathcal{J}(u_1, \rho_1)-\mathcal{J}(u_2, \rho_2)|\leq C\int_0^L \left(|u_d| + |\partial_s u_d|+|\rho_d|\right)\intd s.
\end{align}
Consequently,  \eqref{eq:gelato0} may be estimated by
\begin{align}
    \delta\int_0^L (\partial_s^2u_d)^2\intd s + C(\delta)\int_0^L u_d^2\intd s + C(\delta)\int_0^L (\partial_s u_d)^2 + C(\delta)\int_0^L\rho_d^2\intd s.
\end{align}
For the remaining terms, we proceed similarly. Using \eqref{eq:beta bound below} and choosing $\delta>0$ sufficiently small, the first two terms in \eqref{I1+I2+J2} can be used to absorb all terms involving a factor of $\delta$. Altogether, we obtain the differential inequality
\begin{align}
    \frac{\intd}{\intd t} \varphi(t) \leq C\Big(1+\int_0^L(\partial_s^2u_2)^2\intd s\Big)\varphi(t)\quad \text{for }t\in (0,T).
\end{align}
Using Gronwall's lemma, we conclude that for all $t'\in [0, T]$ we have
\begin{align}
    \varphi(t')\leq C\varphi(0) \exp{\Big(\int_0^{t'} \Big(1+\int_0^L (\partial_s^2 u_2)^2\intd s\Big) \intd t\Big)}.
\end{align}
By Lemma \ref{lem:ds2u in L2}, the integral is finite and uniqueness follows from $\varphi(0)=0$.
\end{proof}

\subsection{Proof of Theorem \ref{thm:Main STE}}
\label{sec:transferSTE}

It remains to transfer the results of Section \ref{sec:ste&dep} and Section \ref{sec:unique&smooth} to the initial nonperiodic setting and to prove Theorem \ref{thm:Main STE}.

\begin{proof}[Proof of Theorem \ref{thm:Main STE}]
We define $(u_0,\rho_0)$ as the $L$-periodic extension of $(\theta_0-\phi,\rho_0)$ to all of $\RR$. Then $(u_0,\rho_0)\in h_L^{1+\alpha}(\RR)$. Now we choose $\bar{\alpha}\in(0,1)$ and $\bar{\eta}\in\left(\frac{1}{2},1\right)$ such that $2\bar{\eta}+\bar{\alpha}= 1+\alpha$. Since $(u_0,\rho_0)\in h^{2\bar{\eta}+\bar{\alpha}}_L(\RR)$, Proposition \ref{prop:existence} yields $\bar{T}>0$ and a solution 
\begin{align}\label{eq:STE BUC regularity}
    (u,\rho)\in\textup{BUC}_{1-\bar{\eta}}^1\left([0,\bar{T}]; h^{\bar{\alpha}}_L(\RR)\right)\cap \textup{BUC}_{1-\bar{\eta}}\left([0,\bar{T}];h^{2+\bar{\alpha}}_L(\RR)\right)
\end{align}
of \eqref{eq:flow eq u} on $(0,\bar{T})\times\RR$ that satisfies
$
    \lim_{t\to0}(u,\rho)(t)=(u_0,\rho_0) \text{ in } C^{2\bar{\eta}+\bar{\alpha}}(\RR)
$
by Lemma \ref{appbuc1}.
According to Proposition \ref{lem:unique}, the solution is unique and due to Lemma \ref{smoothing}  $(u,\rho)\in C^\infty((0,\bar{T})\times\RR)$. Now we define $(\theta,\rho)$ as the restriction of $(u+\phi,\rho)$ on $[0,\bar{T})\times [0,L]$. By construction $(\theta,\rho)$ solves \eqref{eq:flow equation} on $(0,\bar{T})\times[0,L]$. Moreover, the $C^1$-periodicity of $(u,\rho)$ ensures, that $(\theta,\rho)$ satisfies the boundary conditions in \eqref{eq:flow equation}.
Furthermore, the choice of $\bar{\alpha}$ and $\bar{\eta}$ as $2\bar{\eta}+\bar{\alpha}= 1+\alpha$ gives us $\lim_{t\to 0} (\theta(t), \rho(t)) = (\theta_0, \rho_0)$ in $C^{1+\alpha}([0,L];\R^2)$. The uniqueness of the solution, its smoothness and the continuous dependence on the initial datum transfer from $(u,\rho)$ to $(\theta,\rho)$. 
\end{proof}

\begin{rem}
\label{rem:ste+}
In the above proof of Theorem \ref{thm:Main STE} we chose $\bar{\alpha}\in(0,1)$ and $\bar{\eta}\in\left(\frac{1}{2},1\right)$ such that $2\bar{\eta}+\bar{\alpha}= 1+\alpha\in(1,2)$. This yields the convergence of the solution $(\theta,\rho)$ to the initial datum $(\theta_0,\rho_0)$ in $C^{1+\alpha}([0,L])$. But in fact, in Proposition \ref{prop:existence} we can allow for arbitrary $\bar{\alpha}\in(0,1)$ and $\bar{\eta}\in\left(\frac{1}{2},1\right)$ with just the condition that $2 \bar{\eta}+\bar{\alpha} \not \in \ZZ$. By exploiting this freedom we observe the following.

If we even have $(\theta_0,\rho_0)\in h^{2+\alpha}([0,L])$ for $\alpha\in(0,1)$ (with periodic second derivatives), we get convergence of the solution $(\theta,\rho)$ to the initial datum $(\theta_0,\rho_0)$ in $C^{2+\alpha}([0,L])$. This is an important ingredient in proving global existence, cf.\ Section \ref{sec:thm1.3} below.
\end{rem}
\section{Global existence}
\label{lte}
Let $(\theta_0, \rho_0)\in h^{1+\alpha}([0,L])$, $\alpha\in (0,1)$ be such that Hypothesis \ref{hyp:intro} is satisfied.
As a consequence of Theorem \ref{thm:Main STE}, a standard argument for initial value problems yields the existence of a \emph{maximal existence time} $T_{\textup{max}}\in (0,\infty]$ such that the unique solution $(\theta,\rho)$ to \eqref{eq:flow equation} with this initial datum can be smoothly extended to $(0, T_{\textup{max}})$.

The goal of this section is to prove Theorem \ref{longtimeex}, i.e.\ that $T_{\textup{max}}=\infty$.

\subsection{\texorpdfstring{Controlling the matrix $\Pi^{-1}$}{Controlling the matrix Pi-1}
}

As a first step, we show that the inverse of the matrix in \eqref{eq:def Pi}, which defines the Lagrange-multipliers in \eqref{eq:lambdatheta}, does not degenerate if the $L^2$-norm of the curvature $\kappa=\partial_s \theta$ is bounded. In contrast to \cite[Lemma 2.1]{W1993}, our proof is variational and does not depend on the winding number.
\begin{lem}
\label{lem:boundPiFabi}
    Let $K>0$. Then there exists $M=M(K,L)>0$ such that for all functions $\theta\in W^{1,2}(0,L)$ satisfying \eqref{eq:angletocurve} and
    \begin{align}\label{eq:bound elastic}
        \int_0^L(\partial_s \theta)^2\intd s\leq K
    \end{align}
    we have
    \begin{align*}
    \left\Vert\Pi^{-1}(\theta)\right\Vert
    :=\sup_{{X\in\RR^2, \left\vert X\right\vert=1}}\left\vert\Pi^{-1}(\theta\right)(X)\vert\leq M.
    \end{align*}
\end{lem}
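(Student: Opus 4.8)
The plan is to bound $\|\Pi^{-1}(\theta)\|$ from above by bounding $\det\Pi(\theta)$ from below, since for a symmetric positive-semidefinite $2\times 2$ matrix $\Pi$ with entries uniformly bounded (here by $L$), one has $\|\Pi^{-1}\| \le \operatorname{tr}\Pi / \det\Pi \le 2L/\det\Pi$. The trace bound is immediate: $\operatorname{tr}\Pi(\theta) = \int_0^L(\sin^2\theta+\cos^2\theta)\intd s = L$. So everything reduces to producing a constant $m=m(K,L)>0$ with $\det\Pi(\theta)\ge m$. By Remark \ref{rem:det=0 iff theta const}, we have the clean formula
\begin{align*}
\det\Pi(\theta) = \frac{1}{2}\int_0^L\int_0^L \sin^2\big(\theta(s)-\theta(\tilde s)\big)\intd s\intd\tilde s,
\end{align*}
so the task is to bound this double integral away from zero, using only $\int_0^L(\partial_s\theta)^2\,\mathrm ds\le K$ and the closedness conditions \eqref{eq:angletocurve}.

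The key point is that if $\det\Pi(\theta)$ were very small, then $\sin^2(\theta(s)-\theta(\tilde s))$ would be small for a.e.\ pair $(s,\tilde s)$, forcing $\theta$ to be "almost constant modulo $\pi$"; but combined with the Dirichlet bound \eqref{eq:bound elastic}, $\theta$ cannot oscillate much, so in fact $\theta$ would be close to a genuine constant $c$, i.e.\ $\partial_s\gamma \approx (\cos c,\sin c)$ would be nearly a fixed unit vector — and then $\int_0^L\partial_s\gamma\intd s\approx L(\cos c,\sin c)\ne 0$, contradicting the closedness condition $\int_0^L\cos\theta\intd s = \int_0^L\sin\theta\intd s = 0$. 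I would make this rigorous by a compactness/contradiction argument: suppose there is a sequence $\theta_n\in W^{1,2}(0,L)$ satisfying \eqref{eq:angletocurve} and \eqref{eq:bound elastic} with $\det\Pi(\theta_n)\to 0$. Subtracting a multiple of $2\pi$ (which changes nothing) and then normalizing, note that $\theta_n - \frac{1}{L}\int_0^L\theta_n$ is bounded in $W^{1,2}(0,L)$ by Poincaré–Wirtinger together with \eqref{eq:bound elastic}; however, since $\int_0^L\cos\theta_n = \int_0^L\sin\theta_n = 0$ the mean of $\theta_n$ is itself controlled modulo $2\pi$ (e.g.\ by a compactness argument on the torus, or by observing the mean of $\theta_n$ cannot be too close to a value making $\partial_s\gamma_n$ nearly constant), so after passing to a subsequence $\theta_n\to\theta_\infty$ strongly in $C^0([0,L])$ for some $\theta_\infty\in W^{1,2}(0,L)$ still satisfying \eqref{eq:angletocurve}. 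By continuity, $\det\Pi(\theta_\infty) = 0$, so by Remark \ref{rem:det=0 iff theta const}, $\theta_\infty$ is constant. But a constant angle function cannot satisfy $\int_0^L\cos\theta_\infty\intd s = \int_0^L\sin\theta_\infty\intd s = 0$, since that would give $L(\cos\theta_\infty,\sin\theta_\infty)=0$. This contradiction yields the desired uniform lower bound $m(K,L)>0$.

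\textbf{Main obstacle.} The delicate step is controlling the mean value of $\theta_n$ to extract a convergent subsequence: the Dirichlet bound only controls $\theta_n$ up to an additive constant, and that constant lives naturally in $\R/2\pi\ZZ$. One must use the closedness constraint \eqref{eq:angletocurve} to pin it down — the cleanest route is to work directly with $g_n := (\cos\theta_n,\sin\theta_n)\in W^{1,2}(0,L;\R^2)$ (which is bounded in $W^{1,2}$ since $|\partial_s g_n| = |\partial_s\theta_n|$), extract $g_n\to g_\infty$ in $C^0$, note $|g_\infty|\equiv 1$ and $\int_0^L g_\infty\intd s = 0$, and observe $\det\Pi(\theta_n) = \frac12\int_0^L\int_0^L|g_n(s)-g_n(\tilde s)|^2(1 - \tfrac14|g_n(s)-g_n(\tilde s)|^2)\,\ldots$ — more simply, $\sin^2(\theta_n(s)-\theta_n(\tilde s)) = |g_n(s)\wedge g_n(\tilde s)|^2$, which passes to the limit, giving $\int_0^L\int_0^L |g_\infty(s)\wedge g_\infty(\tilde s)|^2 = 0$, hence $g_\infty$ is a.e.\ equal to a fixed unit vector $\pm e$, contradicting $\int_0^L g_\infty = 0$. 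This $g$-formulation avoids the multivaluedness of $\theta$ entirely and is the version I would write up.
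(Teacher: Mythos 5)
Your proposal is correct, and the final $g$-formulation you settle on is a valid and geometrically natural way to obtain compactness. The paper's proof takes a slightly different route at exactly the point you flag as the ``main obstacle'': rather than passing to $g = (\cos\theta,\sin\theta)$, it simply normalizes $\theta_k(0)=0$ (replacing $\theta_k$ by $\theta_k - \theta_k(0)$). This is legitimate because \emph{both} constraints in \eqref{eq:angletocurve} are invariant under adding a constant to $\theta$ (as $\int\cos(\theta+c) = \cos c\int\cos\theta - \sin c\int\sin\theta$ and similarly for $\sin$), and $\det\Pi(\theta)$ is likewise invariant by the difference formula of Remark~\ref{rem:det=0 iff theta const}; with $\theta_k(0)=0$ and $\|\partial_s\theta_k\|_{L^2}^2\le K$ the sequence is bounded in $W^{1,2}$ by the fundamental theorem of calculus, and then weak compactness plus Rellich and Remark~\ref{rem:det=0 iff theta const} finish as in your argument. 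So your worry about pinning down the mean of $\theta_n$ is a non-issue: any affine normalization works because the constraint set and the functional $\det\Pi$ are both translation-invariant. Your $g$-reformulation buys you the same compactness without having to notice this invariance, and it has the minor advantage of avoiding any lifting considerations in the limit; the paper's normalization is marginally more elementary. Also note the paper phrases it as a direct minimization (take a minimizing sequence for $\inf\det\Pi$ over the admissible set, show the infimum is attained and positive) rather than a contradiction argument, but this is a cosmetic difference. Both proofs invoke Remark~\ref{rem:det=0 iff theta const} and the closedness constraint at the final step in the same way.
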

\begin{proof}
    By \eqref{eq:def Pi}, all entries of the matrix $\Pi(\theta)$ are bounded by $L$, thus it suffices to bound $\det \Pi(\theta)$ from below for all $\theta\in W^{1,2}(0,L)$ satisfying \eqref{eq:angletocurve} and \eqref{eq:bound elastic} by some constant $m=m(K,L)>0$. Let $(\theta_k)_{k\in \NN} \subset W^{1,2}(0,L)$ be a minimizing sequence 
    for
    \begin{align}
        m:=\inf\left\lbrace\det \Pi(\theta) \mid \theta \in W^{1,2}(0,L) \text{ satisfying } \eqref{eq:angletocurve} \text{ and } \eqref{eq:bound elastic}\right\rbrace.
    \end{align}
    Note that $m\geq 0$ by Remark \ref{rem:det=0 iff theta const}. Without loss of generality, we may assume $\theta_k(0)=0$, after replacing $\theta_k$ by $\theta_k-\theta_k(0)$. Indeed, by direct computation \eqref{eq:bound elastic} and also \eqref{eq:angletocurve} are invariant under addition of a constant. Moreover, Remark \ref{rem:det=0 iff theta const} directly implies that $\det \Pi(\theta_k-\theta_k(0))=\det\Pi(\theta_k)$.
    
    Consequently, $(\theta_k)_{k\in \NN}\subset W^{1,2}(0,L)$ is bounded, so that we may assume $\theta_k \rightharpoonup \theta$ in $W^{1,2}(0,L)$ and $\theta_k \to \theta$ in $C([0,L])$ by Rellich--Kondrachov. 
    The limit $\theta$ clearly satisfies \eqref{eq:angletocurve} and, by weak lower semicontinuity of the $L^2$-norm, also \eqref{eq:bound elastic}. From \eqref{eq:def Pi}, we find $\det\Pi(\theta_k)\to\det \Pi(\theta)$ and thus conclude that $m = \det\Pi(\theta)$ is a minimum. Since $\theta$ is continuous, by Remark \ref{rem:det=0 iff theta const} we find that $m=0$ if and only if $\theta$ is constant. However, the latter is impossible due to \eqref{eq:angletocurve}, so $m>0$ and the claim follows.
\end{proof}

\subsection{Energy estimates}

We now prove norm estimates for our solution, which by Lemma \ref{smoothing} instantaneously becomes smooth. Since we are only interested in long-time existence here, for the rest of this subsection we assume that we have a fixed \emph{smooth} solution $(\theta,\rho)\in C^\infty([0,T)\times[0,L])$ of \eqref{eq:flow equation}. This means that the initial datum $(\theta_0, \rho_0)$ is smooth and satisfies Hypothesis \ref{hyp:intro}.

Furthermore, throughout this subsection $C \in (0,\infty)$ denotes a generic constant that may vary from line to line. The constant is only allowed to depend on the fixed model parameters 
as well as the initial datum $(\theta_0, \rho_0)$. In particular, $C$ does not depend on the (not necessarily maximal) existence time $T$. 
This is essential for proving the convergence in Section \ref{sec:convergence}. First, we gather some direct consequences of the constrained gradient flow structure.

\begin{prop}
\label{prop:ltest1} 
We have
    \begin{enumerate}[(i)]
        \item $\left\Vert\theta\right\Vert_{L^\infty\left(\left[0,T\right)\times[0,L]\right)}\leq C$,\quad
        $\left\Vert\rho\right\Vert_{L^\infty\left(\left[0,T\right)\times[0,L]\right)}\leq C$,
        \item $\sup_{t\in [0,T)}\left\Vert\partial_s\theta\right\Vert_{L^2(0,L)}\leq C$,\quad
        $\sup_{t\in [0,T)}\left\Vert\partial_s\rho\right\Vert_{L^2(0,L)}\leq C$,
        \item $\left\Vert\partial_t\theta\right\Vert_{L^2\left(\left[0,T\right)\times[0,L]\right)}\leq C$,\quad
        $\left\Vert\partial_t\rho\right\Vert_{L^2\left(\left[0,T\right)\times[0,L]\right)}\leq C$.
    \end{enumerate}
\end{prop}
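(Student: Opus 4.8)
The plan is to exploit the constrained $L^2$-gradient flow structure \eqref{eq:energy decay} together with the conservation laws \eqref{eq:constraint conservation} and Lemma \ref{intincang}. First I would integrate \eqref{eq:energy decay} in time: since $\beta>0$ and $\mu>0$, the energy $\sE_\mu$ is nonnegative, so for every $t\in[0,T)$
\[
\sE_\mu(\theta(t),\rho(t)) + \int_0^t\!\!\int_0^L\big((\partial_t\theta)^2+(\partial_t\rho)^2\big)\intd s\intd\tau = \sE_\mu(\theta_0,\rho_0)=:E_0<\infty,
\]
which is finite because $(\theta_0,\rho_0)$ is smooth. This immediately yields $\sE_\mu(\theta(t),\rho(t))\le E_0$ for all $t\in[0,T)$ and, letting $t\uparrow T$, the space-time bound $\left\Vert\partial_t\theta\right\Vert_{L^2([0,T)\times[0,L])}^2+\left\Vert\partial_t\rho\right\Vert_{L^2([0,T)\times[0,L])}^2\le E_0$, i.e.\ assertion (iii), with a constant depending only on the initial datum.

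Next, from $\sE_\mu(\theta(t),\rho(t))\le E_0$ I read off directly $\tfrac{\mu}{2}\left\Vert\partial_s\rho(t)\right\Vert_{L^2(0,L)}^2\le E_0$, which is the bound for $\partial_s\rho$ in (ii). To upgrade this to an $L^\infty$-bound on $\rho$ I would use that the total mass $\int_0^L\rho(t)\intd s=\int_0^L\rho_0\intd s$ is conserved by \eqref{eq:constraint conservation}: by the mean value theorem for integrals there is $s_t\in[0,L]$ with $\rho(t,s_t)=\tfrac1L\int_0^L\rho_0\intd s$, so the fundamental theorem of calculus (exactly the device in \eqref{eq:trick FTC}) gives $\left\Vert\rho(t)\right\Vert_{L^\infty}\le \tfrac1L\big|\int_0^L\rho_0\intd s\big| + \sqrt{L}\,\left\Vert\partial_s\rho(t)\right\Vert_{L^2}\le C$. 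This proves the $\rho$-part of (i).

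With $\rho$ uniformly bounded there is a compact interval $J\subset\RR$ with $\rho(t,s)\in J$ for all $(t,s)$, and since $\beta$ is continuous and strictly positive, $c_\beta:=\min_J\beta>0$. Then $\sE_\mu(\theta(t),\rho(t))\le E_0$ forces $\tfrac{c_\beta}{2}\left\Vert\partial_s\theta(t)-c_0\right\Vert_{L^2}^2\le E_0$, hence $\left\Vert\partial_s\theta(t)\right\Vert_{L^2}\le \sqrt{2E_0/c_\beta}+|c_0|\sqrt{L}=:C$, which is the $\theta$-part of (ii). Finally, Lemma \ref{intincang} gives that $\int_0^L\theta(t)\intd s=\int_0^L\theta_0\intd s$ is conserved, so, exactly as for $\rho$, the mean value theorem for integrals together with the fundamental theorem of calculus yield $\left\Vert\theta(t)\right\Vert_{L^\infty}\le C$, the remaining part of (i); note all constants depend only on the model parameters and $(\theta_0,\rho_0)$, not on $T$.

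The only point requiring care is the order of the argument: the energy controls $\partial_s\rho$ but a priori \emph{not} $\partial_s\theta$, since $\beta(\rho)$ could in principle be close to $0$, so one must first extract the $L^\infty$-bound on $\rho$ from \emph{mass conservation} — not from the energy — before the curvature term in the energy becomes coercive and the remaining estimates follow. Beyond this bookkeeping I do not expect a genuine analytic obstacle.
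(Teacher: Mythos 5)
Your proof is correct and follows essentially the same route as the paper: integrate the dissipation identity for (iii), read off the $\partial_s\rho$ bound from coercivity in the density term, use mass conservation plus the fundamental theorem of calculus for the $L^\infty$ bound on $\rho$, deduce a uniform lower bound on $\beta(\rho)$, and then obtain the $\partial_s\theta$ and $\theta$ bounds in the same manner (with Lemma~\ref{intincang} replacing mass conservation). The only cosmetic difference is that you pass from $\|\partial_s\theta-c_0\|_{L^2}$ to $\|\partial_s\theta\|_{L^2}$ by the triangle inequality, whereas the paper expands the square and uses $\int_0^L\partial_s\theta\intd s=2\pi\omega$; both are equivalent.
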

\begin{proof}
Statement (iii) follows from integrating \eqref{eq:energy decay} in time and using $\sE_\mu (\theta,\rho)\geq 0$.
\newline
Moreover, for any $t\in [0,T)$ we obtain
\begin{align*}
    \sE_\mu (\theta_0, \rho_0)\geq \frac{\mu}{2} \int_0^L (\partial_s\rho)^2\intd s
\end{align*}
and the second estimate in (ii) is proven.\newline
We will now use this to get an $L^\infty$-bound on $\rho$. Since the total mass is conserved along the evolution by \eqref{eq:constraint conservation}, the Mean Value Theorem for integrals implies the existence of $s_t\in [0,L]$ with $\rho(t, s_t)=\frac1L\int_0^L\rho_0\intd s$ for any $t\in [0,T)$. Similar to \eqref{eq:trick FTC}, the Fundamental Theorem of Calculus yields
\begin{align}\label{eq:Linfty interpolation}
    \vert \rho(t,s)-\rho(t,s_t)\vert \leq \int_0^L \vert \partial_s \rho\vert\intd s \leq L^{\frac{1}{2}}\Big(\int_0^L (\partial_s\rho)^2\intd s\Big)^{\frac{1}{2}} \leq C
\end{align}
and the second estimate of (i) is proven. 
\newline 
Consequently, there exists a compact interval $J\subset \RR$ only depending on $\theta_0, \rho_0$ and the model parameters such that
\begin{align}\label{eq:Jbeta}
    \rho(t,s) \in J \quad\text{for all }t\in [0,T),\, s\in [0,L].
\end{align}
By continuity, we find that 
\begin{align}\label{eq:beta>0}
    \inf_{J}\beta\geq C>0.
\end{align}
 Therefore, using \eqref{eq:energy decay} again, we obtain
\begin{align*}
    \sE_\mu (\theta_0, \rho_0)\geq \inf_{J}\beta \int_0^L(\partial_s \theta-c_0)^2\intd s \geq C \Big(\int_0^L (\partial_s \theta)^2\intd s - 2c_0 \int_0^L\partial_s\theta\intd s + c_0^2 L\Big).
\end{align*}
By \eqref{eq:angletocurve}, we have $\int_0^L\partial_s \theta \intd s = 2\pi\omega$, and hence the first part of (ii) follows. \newline
This together with the fact that $\int_0^L\theta\intd s = \int_0^L\theta_0\intd s$ by Lemma \ref{intincang} yields the first estimate in (i) by proceeding in the same fashion as in \eqref{eq:Linfty interpolation}.
\end{proof}

\begin{prop}
\label{lmultbound}
For the Lagrange multipliers we have
\begin{align*}
    \sup_{t\in[0,T)}\left\vert\lambda_{\theta1}\right\vert\leq C,
    \quad \sup_{t\in[0,T)}\left\vert\lambda_{\theta2}\right\vert\leq C,
    \quad \sup_{t\in[0,T)}\left\vert\lambda_{\rho}\right\vert\leq C.
\end{align*}
\end{prop}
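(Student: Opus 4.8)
The plan is to estimate each of the three Lagrange multipliers using the bounds already established in Proposition~\ref{prop:ltest1}, together with the variational control on $\Pi^{-1}$ from Lemma~\ref{lem:boundPiFabi}. I would begin with $\lambda_\rho$, which is the simplest: from the explicit formula \eqref{eq:lambdarho} and the fact that $\rho(t,\cdot)$ takes values in the compact interval $J$ from \eqref{eq:Jbeta}, the quantity $\beta'(\rho)$ is bounded by a constant depending only on $J$ (hence on the data), and $\int_0^L(\partial_s\theta-c_0)^2\intd s$ is bounded by Proposition~\ref{prop:ltest1}(ii) (expanding the square and using $\int_0^L\partial_s\theta\intd s = 2\pi\omega$). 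This immediately gives $\sup_{t\in[0,T)}|\lambda_\rho|\leq C$.

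For $\lambda_{\theta1}$ and $\lambda_{\theta2}$ I would use the integration-by-parts form from Remark~\ref{rem:lambdthetaafterpI}, namely
\begin{align*}
\begin{pmatrix}\lambda_{\theta1}(t)\\ \lambda_{\theta2}(t)\end{pmatrix}
= \Pi^{-1}(\theta)(t)\int_0^L\begin{pmatrix}\cos\theta\\ \sin\theta\end{pmatrix}\partial_s\theta\,\beta(\rho)(\partial_s\theta-c_0)\intd s,
\end{align*}
and estimate the two factors separately. The matrix norm $\|\Pi^{-1}(\theta)(t)\|$ is controlled by Lemma~\ref{lem:boundPiFabi}: its hypotheses \eqref{eq:angletocurve} hold along the flow by \eqref{eq:constraint conservation} and the boundary conditions, and \eqref{eq:bound elastic} holds uniformly in $t$ with $K$ coming from Proposition~\ref{prop:ltest1}(ii), so $\|\Pi^{-1}(\theta)(t)\|\leq M$ for all $t\in[0,T)$. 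For the integral factor, $|\cos\theta|,|\sin\theta|\leq 1$, $\beta(\rho)$ is bounded since $\rho(t,\cdot)\in J$, and by Cauchy--Schwarz
\begin{align*}
\int_0^L |\partial_s\theta|\,|\partial_s\theta-c_0|\intd s \leq \Big(\int_0^L(\partial_s\theta)^2\intd s\Big)^{\frac12}\Big(\int_0^L(\partial_s\theta-c_0)^2\intd s\Big)^{\frac12},
\end{align*}
which is bounded by Proposition~\ref{prop:ltest1}(ii). Combining these yields $\sup_{t\in[0,T)}(|\lambda_{\theta1}|+|\lambda_{\theta2}|)\leq C$.

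The only genuine subtlety — and the reason this is stated as a separate proposition rather than folded into the energy estimates — is the uniform nondegeneracy of $\Pi^{-1}$. In \cite{W1993} this relied on the winding number being nonzero; here one must instead invoke the variational Lemma~\ref{lem:boundPiFabi}, whose proof via a minimizing sequence in $W^{1,2}(0,L)$ with the compactness from Rellich--Kondrachov and the constraint \eqref{eq:angletocurve} ruling out constant limits is precisely what removes the dependence on $\omega$. So the main point to be careful about is verifying that the hypotheses of Lemma~\ref{lem:boundPiFabi} are met uniformly in $t$, after which everything reduces to the elementary bounds of Proposition~\ref{prop:ltest1} and the triangle/Cauchy--Schwarz inequalities.
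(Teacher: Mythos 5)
Your proof is correct and follows essentially the same route as the paper's: bound $\|\Pi^{-1}(\theta)\|$ uniformly in $t$ via Lemma~\ref{lem:boundPiFabi} (whose hypotheses are met by Proposition~\ref{prop:ltest1}(ii) and the constraint conservation), then control the remaining integrals in the form from Remark~\ref{rem:lambdthetaafterpI} using the $L^2$-bound on $\partial_s\theta$ and the uniform bounds on $\beta$, $\beta'$ on the compact set $J$. The only cosmetic difference is that you apply Cauchy--Schwarz to $\int|\partial_s\theta||\partial_s\theta-c_0|\intd s$ directly, whereas the paper expands the product as $(\partial_s\theta)^2 - c_0\partial_s\theta$ first; both are immediate.
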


\begin{proof}
Because of Proposition \ref{prop:ltest1}, Lemma \ref{lem:boundPiFabi} can be applied with $K=C$, only depending on the model parameters and the initial datum.
Hence, for the Lagrange multipliers $\lambda_{\theta1}$ and $\lambda_{\theta2}$ by Remark \ref{rem:lambdthetaafterpI} we have 
\begin{align*}
    \left\vert\begin{pmatrix}
    \lambda_{\theta1}\\ \lambda_{\theta2}
    \end{pmatrix}\right\vert
    &\leq C\sup_{J}\beta\Big(\int_0^L(\partial_s\theta)^2\intd s+\int_0^L\left\vert c_0\,\partial_s\theta\right\vert\intd s\Big),
\end{align*}
where $J$ is as in \eqref{eq:Jbeta}. The right hand side is bounded for all $t\in [0,T)$ by Proposition \ref{prop:ltest1} and the Cauchy--Schwarz inequality. This way, we obtain the desired bounds for $\lambda_{\theta1}$ and $\lambda_{\theta2}$.
For $\lambda_\rho$ we use \eqref{eq:lambdarho}
and proceed similarly, using that $\sup_J |\beta'|\leq C$ by continuity.
\end{proof}

As a next step, we would like to bound the $L^2$-norm of $\partial_s^2(\theta, \rho)$ uniformly in time. Directly pursuing this idea, we encounter difficulties in controlling the nonlinear coupling of the system \eqref{eq:flow equation}, which seems to be incompatible with a direct application of interpolation inequalities. Instead, we control the $L^2$-norm of the velocity $\partial_t (\theta, \rho)$ first.

\begin{prop}
\label{dtLinfL2}
We have
\begin{align*}
    \sup_{t\in[0,T)}\left\Vert\partial_t\theta\right\Vert_{L^2(0,L)}\leq C,\qquad
    \sup_{t\in[0,T)}\left\Vert\partial_t\rho\right\Vert_{L^2(0,L)}\leq C.
\end{align*}
\end{prop}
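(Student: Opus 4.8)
The plan is to establish an $L^{2}$-estimate for the velocity by differentiating the system \eqref{eq:flow equation} in time and testing against $(\partial_t\theta,\partial_t\rho)$. Set
\[
\psi(t):=\frac12\int_0^L\big((\partial_t\theta)^2+(\partial_t\rho)^2\big)\intd s ,
\]
which is smooth on $[0,T)$ since $(\theta,\rho)\in C^\infty([0,T)\times[0,L])$. Differentiating the two evolution equations in \eqref{eq:flow equation} with respect to $t$, multiplying by $\partial_t\theta$ and $\partial_t\rho$ respectively, integrating over $[0,L]$ and integrating by parts yields an identity for $\frac{\intd}{\intd t}\psi(t)$. All boundary terms vanish: by the boundary conditions in \eqref{eq:flow equation}, the functions $\partial_t\theta$, $\partial_s\partial_t\theta$, $\partial_t\rho$, $\partial_s\partial_t\rho$ as well as $\beta(\rho)$, $\partial_s\rho$, $\partial_s\theta$ are $L$-periodic, so every boundary contribution is a product of $L$-periodic factors evaluated at $L$ minus at $0$. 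The crucial observation is that the time derivatives $\partial_t\lambda_{\theta1}$, $\partial_t\lambda_{\theta2}$, $\partial_t\lambda_\rho$ of the Lagrange multipliers drop out: $\partial_t\lambda_\rho$ is multiplied by $\int_0^L\partial_t\rho\intd s=0$ by \eqref{eq:dt lambda rho}, while $\partial_t\lambda_{\theta1}$, $\partial_t\lambda_{\theta2}$ are multiplied by $\int_0^L\sin\theta\,\partial_t\theta\intd s$ and $\int_0^L\cos\theta\,\partial_t\theta\intd s$, which vanish by \eqref{eq:dt lambda theta}. The resulting identity has the form
\[
\frac{\intd}{\intd t}\psi(t)=-\int_0^L\beta(\rho)(\partial_s\partial_t\theta)^2\intd s-\mu\int_0^L(\partial_s\partial_t\rho)^2\intd s+R(t),
\]
where $R(t)$ gathers the coupling term $-2\int_0^L\beta'(\rho)\,\partial_t\rho\,(\partial_s\theta-c_0)\,\partial_s\partial_t\theta\intd s$, the term $-\frac12\int_0^L\beta''(\rho)(\partial_t\rho)^2(\partial_s\theta-c_0)^2\intd s$, and the surviving multiplier part $\lambda_{\theta1}\int_0^L\cos\theta\,(\partial_t\theta)^2\intd s+\lambda_{\theta2}\int_0^L\sin\theta\,(\partial_t\theta)^2\intd s$.

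Next I would estimate $R(t)$. The multiplier part is bounded by $(|\lambda_{\theta1}|+|\lambda_{\theta2}|)\,\|\partial_t\theta\|_{L^2(0,L)}^2\le C\psi(t)$ using Proposition \ref{lmultbound}. For the two nonlinear terms, using $\beta(\rho)\ge C>0$ from \eqref{eq:beta>0} and $|\beta^{(k)}(\rho)|\le C$ on the compact interval $J$ from \eqref{eq:Jbeta}, Young's inequality (weighted by $\beta(\rho)$ in the coupling term) gives
\[
|R(t)|\le\frac12\int_0^L\beta(\rho)(\partial_s\partial_t\theta)^2\intd s+C\int_0^L(\partial_t\rho)^2(\partial_s\theta-c_0)^2\intd s+C\psi(t).
\]
To control the remaining product I would interpolate: since $\int_0^L\partial_t\rho\intd s=0$ by \eqref{eq:dt lambda rho}, the one-dimensional Gagliardo--Nirenberg inequality together with the Poincar\'e inequality gives $\|\partial_t\rho\|_{L^\infty(0,L)}^2\le C\|\partial_t\rho\|_{L^2(0,L)}\|\partial_s\partial_t\rho\|_{L^2(0,L)}$, and with $\|\partial_s\theta-c_0\|_{L^2(0,L)}\le C$ from Proposition \ref{prop:ltest1}(ii),
\[
\int_0^L(\partial_t\rho)^2(\partial_s\theta-c_0)^2\intd s\le C\|\partial_t\rho\|_{L^\infty(0,L)}^2\le\frac{\mu}{2}\|\partial_s\partial_t\rho\|_{L^2(0,L)}^2+C\psi(t).
\]
Absorbing the $\partial_s\partial_t\theta$- and $\partial_s\partial_t\rho$-terms into the two negative terms leaves the differential inequality $\frac{\intd}{\intd t}\psi(t)\le C\psi(t)$ on $[0,T)$, with $C$ depending only on the model parameters and the initial datum, not on $T$.

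To conclude, I would upgrade this to a bound uniform in $T$ by a uniform Gronwall argument, which is where Proposition \ref{prop:ltest1}(iii), i.e.\ $\int_0^T\psi(t)\intd t\le C$, enters. For $t\in[0,1]$, ordinary Gronwall gives $\psi(t)\le\psi(0)\,e^{C}\le C$ (here $\psi(0)$ is finite and depends only on the data, since the initial datum is smooth and $\partial_t(\theta,\rho)(0,\cdot)=\oF(\theta_0,\rho_0)$). For $t\ge1$, the mean value theorem provides $\tau\in[t-1,t]$ with $\psi(\tau)\le\int_{t-1}^{t}\psi(r)\intd r\le C$, and ordinary Gronwall on $[\tau,t]$ gives $\psi(t)\le\psi(\tau)\,e^{C}\le C$. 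Hence $\sup_{t\in[0,T)}\psi(t)\le C$, which is exactly the claimed estimate.

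The main obstacle is the nonlinear coupling term $\int_0^L\beta'(\rho)\,\partial_t\rho\,(\partial_s\theta-c_0)\,\partial_s\partial_t\theta\intd s$: it cannot be absorbed by Young's inequality alone, and it is essential that the $\rho$-equation supplies the parabolic term $\mu\int_0^L(\partial_s\partial_t\rho)^2\intd s$ and that mass conservation forces $\partial_t\rho$ to have zero average, so that the interpolation inequality above provides exactly the dissipation needed to absorb it. A second, more subtle point is that the differential inequality only yields exponential growth in $t$, so the $T$-independent bound genuinely relies on the space-time $L^2$-bound on the velocity from Proposition \ref{prop:ltest1}(iii) via the uniform Gronwall argument.
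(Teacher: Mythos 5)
Your proof is correct and follows the same strategy as the paper: differentiate the system in time, test against $\partial_t(\theta,\rho)$, observe that the time derivatives of the Lagrange multipliers drop out thanks to \eqref{eq:dt lambda theta} and \eqref{eq:dt lambda rho}, control the nonlinear coupling via Young's inequality and the one-dimensional Agmon/Gagliardo--Nirenberg interpolation exploiting the zero average of $\partial_t\rho$, and finally invoke the space-time $L^2$-bound on the velocity from Proposition \ref{prop:ltest1}(iii). The only (cosmetic) deviation is the very last step: the paper simply integrates $\varphi'\le C\varphi$ in time and bounds $\int_0^t\varphi$ directly, whereas you run a slightly more elaborate ``uniform Gronwall'' argument; both rely on the same inputs and give the same $T$-independent bound.
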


\begin{proof}
By continuity, Proposition \ref{prop:ltest1} and with $J$ as in \eqref{eq:Jbeta} we have that
\begin{align}\label{eq:bound Dbeta D2beta}
    \sup_{J}\left\vert\beta^\prime\right\vert\leq C,\qquad
    \sup_{J}\left\vert\beta^{\prime\prime}\right\vert\leq C.
\end{align}
We consider the smooth function
\begin{align*}
    \varphi(t):=\frac{1}{2}\int_0^L\left((\partial_t\theta)^2+(\partial_t\rho)^2\right)\intd s, \quad \text{for }t\in [0,T).
\end{align*}
We now use \eqref{eq:flow equation} and integration by parts 
to differentiate $\varphi$. The boundary terms dissapear as a consequence of the boundary conditions in \eqref{eq:flow equation}.
We have
\begin{align}\label{dtvarphi2}
    \frac{\intd}{\intd t}\varphi(t)
    =\, &-\int_0^L\beta(\rho)(\partial_t\partial_s\theta)^2\intd s
    -\int_0^L\mu(\partial_t\partial_s\rho)^2\intd s\nonumber\\ 
    &-2\int_0^L\partial_t\partial_s\theta\,\beta^\prime(\rho)\,\partial_t\rho\,\left(\partial_s\theta-c_0\right) \intd s
    -\frac{1}{2}\int_0^L(\partial_t\rho)^2\beta^{\prime\prime}(\rho)\,(\partial_s\theta-c_0)^2\intd s\nonumber\\
    &+\int_0^L(\partial_t\theta)^2\lambda_{\theta1}\cos\theta\intd s
    +\int_0^L(\partial_t\theta)^2\lambda_{\theta2}\sin\theta\intd s.
\end{align}
By \eqref{eq:dt lambda theta} and \eqref{eq:dt lambda rho} no time derivatives of the Lagrange multipliers appear.
Using \eqref{eq:bound Dbeta D2beta}, Young's inequality, Proposition \ref{prop:ltest1} and Proposition \ref{lmultbound}, we have
\begin{align}\label{eq:dtvarphi3}
    \frac{\intd}{\intd t}\varphi(t)
    \leq\,&-\inf_{J}\beta\int_0^L(\partial_t\partial_s\theta)^2\intd s
    -\mu\int_0^L(\partial_t\partial_s\rho)^2\intd s +\delta\int_0^L(\partial_t\partial_s\theta)^2\intd s\nonumber\\ 
    &+C(\delta) \sup_{s\in[0,L]}(\partial_t\rho)^2
    +C\int_0^L(\partial_t\theta)^2\intd s
\end{align}
for $\delta>0$ to be chosen and some correspondingly adjusted $C(\delta)>0$.
Using \eqref{eq:dt lambda rho} and arguing as in \eqref{eq:Linfty interpolation}, we find there exists $s_t\in [0,L]$ with $\partial_t \rho(t, s_t)=0$ and thus
\begin{align}
\label{eq:GagNirdtrho}
    \sup_{s\in[0,L]}\left\vert\partial_t\rho\right\vert^2 &= \sup_{s\in [0,L]}\left\vert (\partial_t \rho)^2 - (\partial_t\rho(t,s_t))^2\right\vert
    \leq \int_0^L \left\vert \partial_s\left(\partial_t\rho\right)^2\right\vert\intd s\nonumber\\
    &   \leq 2\left\Vert\partial_t\rho\right\Vert_{L^2(0,L)}\left\Vert\partial_t\partial_s\rho\right\Vert_{L^2(0,L)}
    \leq \delta\left\Vert\partial_t\partial_s\rho\right\Vert_{L^2(0,L)}^2+ C(\delta)\left\Vert\partial_t\rho\right\Vert_{L^2(0,L)}^2,
\end{align}
using Young's inequality in the last step.
Using \eqref{eq:beta>0} and choosing $\delta>0$ small enough in \eqref{eq:dtvarphi3} and \eqref{eq:GagNirdtrho}, we obtain
\begin{align}\label{eq:gronwall linear}
    \frac{\intd}{\intd t}\varphi(t)\leq C\varphi(t)\quad \text{for all }t\in [0,T).
\end{align}
By Proposition \ref{prop:ltest1}, we find that $\int_0^T \varphi(t)\intd t$ is bounded independently of $T$ and thus the claim follows from integrating \eqref{eq:gronwall linear}.
\end{proof}

\begin{prop}
\label{ltestds2} 
We have
\begin{align*}
    \sup_{t\in[0,T)}\left\Vert\partial_s^2\theta\right\Vert_{L^2(0,L)}\leq C,\qquad
    \sup_{t\in[0,T)}\left\Vert\partial_s^2\rho\right\Vert_{L^2(0,L)}\leq C.
\end{align*}
\end{prop}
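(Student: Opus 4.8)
The plan is to avoid fighting the nonlinear coupling with interpolation inequalities and instead read the second-order bounds directly off the flow equations, using the velocity bound from Proposition \ref{dtLinfL2} as input. The order of the bootstrap is essential: I would first upgrade $\partial_s\theta$ to a uniform $L^\infty$-bound, then deduce the bound for $\partial_s^2\rho$, and only then the bound for $\partial_s^2\theta$. Throughout, $C$ denotes a constant depending only on the model parameters and the initial datum, never on $T$.

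\emph{Step 1 ($\partial_s\theta$ in $L^\infty$).} Rewriting the evolution equation for $\theta$ in \eqref{eq:flow equation} as
\begin{align*}
    \partial_s\big[\beta(\rho)(\partial_s\theta-c_0)\big] = \partial_t\theta - \lambda_{\theta1}\sin\theta + \lambda_{\theta2}\cos\theta =: g,
\end{align*}
I observe that $\sup_{t\in[0,T)}\|g\|_{L^2(0,L)}\leq C$ by Proposition \ref{dtLinfL2}, Proposition \ref{prop:ltest1}(i) and Proposition \ref{lmultbound}. Together with $\sup_{t}\|\beta(\rho)(\partial_s\theta-c_0)\|_{L^2(0,L)}\leq C$, which follows from Proposition \ref{prop:ltest1}(ii) and the $L^\infty$-bound on $\beta(\rho)$ coming from \eqref{eq:Jbeta} and the continuity of $\beta$, this shows that $\beta(\rho)(\partial_s\theta-c_0)$ is bounded in $W^{1,2}(0,L)$ uniformly in $t\in[0,T)$. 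The one-dimensional Sobolev embedding $W^{1,2}(0,L)\hookrightarrow C^0([0,L])$ then yields $\|\beta(\rho)(\partial_s\theta-c_0)\|_{L^\infty([0,T)\times[0,L])}\leq C$, and since $\inf_J\beta\geq C>0$ by \eqref{eq:beta>0} we conclude $\|\partial_s\theta\|_{L^\infty([0,T)\times[0,L])}\leq C$.

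\emph{Step 2 ($\partial_s^2\rho$).} From the evolution equation for $\rho$ we have $\mu\,\partial_s^2\rho = \partial_t\rho + \tfrac12\beta'(\rho)(\partial_s\theta-c_0)^2 + \lambda_\rho$, whose right-hand side is bounded in $L^2(0,L)$ uniformly in $t$: the first term by Proposition \ref{dtLinfL2}, the second by Step 1 and the bound $\sup_J|\beta'|\leq C$ (continuity together with \eqref{eq:Jbeta}), the third by Proposition \ref{lmultbound}. Dividing by $\mu>0$ gives $\sup_t\|\partial_s^2\rho\|_{L^2(0,L)}\leq C$. \emph{Step 3 ($\partial_s^2\theta$).} Expanding the left-hand side of the identity from Step 1, $\beta(\rho)\partial_s^2\theta = g - \beta'(\rho)\,\partial_s\rho\,(\partial_s\theta-c_0)$; the last term is bounded in $L^2(0,L)$ by $C\|\partial_s\rho\|_{L^2(0,L)}$ using Step 1 and $\sup_J|\beta'|\leq C$, hence by $C$ via Proposition \ref{prop:ltest1}(ii), while $\|g\|_{L^2(0,L)}\leq C$ as above. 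Dividing by $\beta(\rho)\geq C>0$ yields $\sup_t\|\partial_s^2\theta\|_{L^2(0,L)}\leq C$, which completes the proof.

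I expect the only genuinely substantive point to be Step 1: although neither $\partial_s^2\theta$ nor the coupling term $\beta'(\rho)\partial_s\rho(\partial_s\theta-c_0)$ is individually controlled in $L^2$ a priori, their combination $\partial_s[\beta(\rho)(\partial_s\theta-c_0)]$ equals the already-bounded quantity $g$, and the $W^{1,2}\hookrightarrow L^\infty$ embedding then promotes $\partial_s\theta$ to $L^\infty$, which decouples the two remaining estimates. Everything after that is bookkeeping with the bounds from Propositions \ref{prop:ltest1}, \ref{lmultbound} and \ref{dtLinfL2}.
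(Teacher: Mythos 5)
Your proof is correct, and it takes a genuinely different route from the paper's. The paper estimates $\|\partial_s^2\theta\|_{L^2}^2+\|\partial_s^2\rho\|_{L^2}^2$ by the right-hand sides of the two equations, then handles the problematic quartic terms $\|(\partial_s\rho)^2(\partial_s\theta)^2\|_{L^1}$ and $\|(\partial_s\theta)^4\|_{L^1}$ via H\"older, Gagliardo--Nirenberg interpolation, and Young's inequality so that the resulting small multiples of $\|\partial_s^2\theta\|_{L^2}^2$ and $\|\partial_s^2\rho\|_{L^2}^2$ can be absorbed into the left-hand side. Your proof sidesteps the interpolation entirely: you observe that the flux $w:=\beta(\rho)(\partial_s\theta-c_0)$ has both $\|w\|_{L^2}$ and $\|\partial_s w\|_{L^2}$ uniformly bounded (the latter being exactly $g=\partial_t\theta-\lambda_{\theta1}\sin\theta+\lambda_{\theta2}\cos\theta$, controlled by Propositions \ref{dtLinfL2}, \ref{prop:ltest1}, \ref{lmultbound}), so $W^{1,2}(0,L)\hookrightarrow L^\infty$ gives a uniform $L^\infty$-bound on $\partial_s\theta$ after dividing by $\beta(\rho)\ge\inf_J\beta>0$. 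Once $\partial_s\theta$ is uniformly bounded pointwise, both nonlinear right-hand sides are trivially $L^2$-bounded and the estimates for $\partial_s^2\rho$ and $\partial_s^2\theta$ follow by inspection. Your approach is shorter, avoids interpolation inequalities, and produces the useful by-product $\|\partial_s\theta\|_{L^\infty([0,T)\times[0,L])}\le C$, which the paper's argument does not give directly at this stage. The paper's interpolation argument, on the other hand, is the more robust template for the next step (Proposition \ref{ltestds3} and Remark \ref{rem:global_Wm2_bound}), where the analogous divergence structure is not available and one really does need to absorb highest-order terms.

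One small presentational point: when you claim $\sup_t\|\beta(\rho)(\partial_s\theta-c_0)\|_{W^{1,2}}\le C$, it is worth writing explicitly that $\partial_s\big[\beta(\rho)(\partial_s\theta-c_0)\big]=g$ so the $W^{1,2}$-norm of the flux is controlled by $\|w\|_{L^2}+\|g\|_{L^2}$; as written, the word ``Together'' carries a bit more weight than it should. This is cosmetic, not a gap.
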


\begin{proof}
To prove the required estimates, we need to consider simultaneously the evolution equations of $\theta$ and $\rho$.
Recall from \eqref{eq:flow equation} that we have
\begin{align*}
    \beta(\rho)\partial_s^2\theta&=\partial_t\theta-\beta^\prime(\rho)\partial_s\rho(\partial_s\theta-c_0)-\lambda_{\theta1}\sin\theta+\lambda_{\theta2}\cos\theta,\\
    \mu\partial_s^2\rho&=\partial_t\rho+\frac{1}{2}\beta^\prime(\rho)(\partial_s\theta-c_0)^2+\lambda_\rho
\end{align*}
on $[0,T)$. Using \eqref{eq:beta>0} and arguing as in \eqref{eq:bound Dbeta D2beta}, by Young's inequality we find
\begin{align*}
    &\left\Vert\partial_s^2\theta\right\Vert^2_{L^2(0,L)}+\left\Vert\partial_s^2\rho\right\Vert^2_{L^2(0,L)}\\
    &\leq C\bigg(\int_0^L(\partial_t\theta)^2\intd s+\int_0^L(\partial_s\rho)^2(\partial_s\theta-c_0)^2\intd s+\lambda_{\theta1}^2\int_0^L\!\sin^2\theta\intd s+ \lambda_{\theta2}^2\int_0^L\!\cos^2\theta\intd s\\
    &\quad+\int_0^L(\partial_t\rho)^2\intd s+\int_0^L(\partial_s\theta-c_0)^4\intd s+\lambda_\rho^2L\bigg)
\end{align*}
for all $t\in[0,T)$.
By Propositions \ref{lmultbound} and \ref{dtLinfL2} it is sufficient to consider the second term and the sixth term on the right side of the inequality. Using Hölder's inequality, interpolation \cite[p.~233]{Brezis} and  $\int_0^L\partial_s \rho\intd s=0$, we find 
\begin{align*}
    &\left\Vert(\partial_s\rho)^2(\partial_s\theta)^2\right\Vert_{L^1(0,L)}
    \leq\left\Vert\partial_s\rho\right\Vert^2_{L^4(0,L)}\left\Vert\partial_s\theta\right\Vert^2_{L^4(0,L)}\\
    &\qquad  \leq C \left\Vert\partial_s^2\rho\right\Vert_{L^2(0,L)}^{\frac{1}{2}}\left\Vert\partial_s\rho\right\Vert_{L^2(0,L)}^{\frac{3}{2}}\Big(\left\Vert\partial_s^2\theta\right\Vert_{L^2(0,L)}^{\frac{1}{4}}\left\Vert\partial_s\theta\right\Vert_{L^2(0,L)}^{\frac{3}{4}}+\left\Vert\partial_s\theta\right\Vert_{L^2(0,L)}\Big)^2.
\end{align*}
Due to Proposition \ref{prop:ltest1} the norm $\left\Vert\partial_s\theta\right\Vert_{L^2(0,L)}$ is bounded for all $t\in[0,T)$ and similarly for $\rho$. Therefore, using twice Young's inequality we estimate
\begin{align}\label{eq:absorb1}
    \left\Vert(\partial_s\rho)^2(\partial_s\theta)^2\right\Vert_{L^1(0,L)}
    &\leq \delta\left\Vert\partial_s^2\rho\right\Vert_{L^2(0,L)}^2+\delta\left\Vert\partial_s^2\theta\right\Vert_{L^2(0,L)}^2+C(\delta),
\end{align}
for $\delta>0$ to be chosen. In an analogous way we also get
\begin{align}
\label{eq:absorb2}
    \left\Vert(\partial_s\theta)^4\right\Vert_{L^1(0,L)}
    \leq \delta\left\Vert\partial_s^2\theta\right\Vert_{L^2(0,L)}^2+C(\delta).
\end{align}
Finally, taking $\delta>0$ sufficiently small and absorbing, 
the claim follows.
\end{proof}

\begin{prop}
\label{ltestds3} 
We have
\begin{align*}
    \sup_{t\in[0,T)}\left\Vert\partial_s^3\theta\right\Vert_{L^2(0,L)}\leq C,\qquad
    \sup_{t\in[0,T)}\left\Vert\partial_s^3\rho\right\Vert_{L^2(0,L)}\leq C.
\end{align*}
\end{prop}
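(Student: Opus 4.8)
The plan is to establish the bounds by first upgrading Proposition~\ref{dtLinfL2} by one spatial derivative, showing
\[
\sup_{t\in[0,T)}\big(\|\partial_t\partial_s\theta\|_{L^2(0,L)} + \|\partial_t\partial_s\rho\|_{L^2(0,L)}\big)\le C ,
\]
and then reading off $\partial_s^3\theta$ and $\partial_s^3\rho$ from the flow equations \eqref{eq:flow equation} differentiated once in $s$. Indeed, differentiating \eqref{eq:flow equation} in $s$ and solving for the top-order terms gives, using $\partial_s\lambda_\rho=0$,
\begin{align*}
\beta(\rho)\,\partial_s^3\theta &= \partial_s\partial_t\theta - 2\beta'(\rho)\partial_s\rho\,\partial_s^2\theta - \big(\beta''(\rho)(\partial_s\rho)^2 + \beta'(\rho)\partial_s^2\rho\big)(\partial_s\theta-c_0) - \big(\lambda_{\theta1}\cos\theta + \lambda_{\theta2}\sin\theta\big)\partial_s\theta,\\
\mu\,\partial_s^3\rho &= \partial_s\partial_t\rho - \tfrac12\beta''(\rho)\partial_s\rho\,(\partial_s\theta-c_0)^2 - \beta'(\rho)(\partial_s\theta-c_0)\,\partial_s^2\theta.
\end{align*}
By Propositions~\ref{prop:ltest1} and \ref{ltestds2} together with the one-dimensional Sobolev embedding, $\|\partial_s\theta\|_{L^\infty} + \|\partial_s\rho\|_{L^\infty} + \|\partial_s^2\theta\|_{L^2} + \|\partial_s^2\rho\|_{L^2}\le C$; moreover $\rho(t,s)\in J$ with $\inf_J\beta\ge c>0$ and $|\beta'(\rho)|,|\beta''(\rho)|\le C$ (cf.\ \eqref{eq:Jbeta}, \eqref{eq:beta>0}), and the Lagrange multipliers are bounded by Proposition~\ref{lmultbound}. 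Hence the right-hand sides above are bounded in $L^2(0,L)$ by $C\big(\|\partial_s\partial_t\theta\|_{L^2}+1\big)$ and $C\big(\|\partial_s\partial_t\rho\|_{L^2}+1\big)$, so it suffices to bound $\partial_t\partial_s(\theta,\rho)$ in $L^\infty\big([0,T);L^2(0,L)\big)$.

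To that end, set $v:=\partial_t\theta$, $w:=\partial_t\rho$ and consider $\tilde\varphi(t):=\frac12\int_0^L\big((\partial_s v)^2+(\partial_s w)^2\big)\intd s$, which is differentiable on $[0,T)$ and finite at $t=0$ since the solution is smooth on $[0,T)\times[0,L]$. Differentiating \eqref{eq:flow equation} in $t$ gives a linear parabolic system for $(v,w)$,
\begin{align*}
\partial_t v &= \partial_s\big[\beta(\rho)\partial_s v + \beta'(\rho)w(\partial_s\theta-c_0)\big] + \dot\lambda_{\theta1}\sin\theta + \lambda_{\theta1}\cos\theta\,v - \dot\lambda_{\theta2}\cos\theta + \lambda_{\theta2}\sin\theta\,v,\\
\partial_t w &= \mu\,\partial_s^2 w - \tfrac12\beta''(\rho)w(\partial_s\theta-c_0)^2 - \beta'(\rho)(\partial_s\theta-c_0)\partial_s v - \dot\lambda_\rho,
\end{align*}
where $\dot\lambda_{\bullet}:=\partial_t\lambda_{\bullet}$. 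Using the $L$-periodicity of all $s$-derivatives of the smooth solution, $\frac{\intd}{\intd t}\tilde\varphi = -\int_0^L\big(\partial_s^2 v\,\partial_t v + \partial_s^2 w\,\partial_t w\big)\intd s$; substituting the equations and integrating by parts the principal part extracts the dissipation $-\int_0^L\beta(\rho)(\partial_s^2 v)^2\intd s - \mu\int_0^L(\partial_s^2 w)^2\intd s$. Every remaining term is then controlled by Young's inequality, absorbing $\delta\|\partial_s^2 v\|_{L^2}^2$ and $\delta\|\partial_s^2 w\|_{L^2}^2$ into the dissipation (using \eqref{eq:beta>0}), bounding all lower-order factors through the $L^\infty$- and $L^2$-bounds above, and applying the interpolation inequality $\|f\|_{L^\infty}^2 + \|\partial_s f\|_{L^2}^2 \le \epsilon\|\partial_s^2 f\|_{L^2}^2 + C(\epsilon)\|f\|_{L^2}^2$ to the mean-zero periodic functions $v,w$, with the $\epsilon$-terms again absorbed. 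The point of this bookkeeping is that every non-absorbed contribution is dominated not by a mere constant but by the time-integrable quantity $g(t):=\|\partial_t\theta\|_{L^2}^2 + \|\partial_t\rho\|_{L^2}^2 + \dot\lambda_{\theta1}^2 + \dot\lambda_{\theta2}^2 + \dot\lambda_\rho^2$: the first two summands integrate to at most $\sE_\mu(\theta_0,\rho_0)$ by \eqref{eq:energy decay}, while differentiating the formulas \eqref{eq:lambdatheta}, \eqref{eq:lambdarho} (via Remark~\ref{rem:lambdthetaafterpI}) in time and integrating by parts, Lemma~\ref{lem:boundPiFabi} together with Propositions~\ref{prop:ltest1} and \ref{ltestds2} yields $|\dot\lambda_{\theta1}| + |\dot\lambda_{\theta2}| + |\dot\lambda_\rho| \le C\big(\|\partial_t\theta\|_{L^2} + \|\partial_t\rho\|_{L^2}\big)$, so $\int_0^T g(t)\intd t\le C$ with $C$ independent of $T$. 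Thus $\frac{\intd}{\intd t}\tilde\varphi(t) + c\int_0^L\big((\partial_s^2 v)^2 + (\partial_s^2 w)^2\big)\intd s \le C\,g(t)$, and integrating over $[0,t]$ gives $\sup_{t\in[0,T)}\tilde\varphi(t)\le\tilde\varphi(0)+C$ uniformly in $T$, which closes the argument together with the first paragraph.

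The main obstacle is the estimate of $\frac{\intd}{\intd t}\tilde\varphi$: one must organise the numerous nonlinear terms produced by differentiating the coupled, nonlocal system \eqref{eq:flow equation} in $t$ so that each is dominated by a time-integrable (not merely bounded) quantity, since a naive approach — for instance differentiating $\frac12\int_0^L\big((\partial_s^2\theta)^2+(\partial_s^2\rho)^2\big)\intd s$ directly — only produces $\int_0^T\big(\|\partial_s^3\theta\|_{L^2}^2 + \|\partial_s^3\rho\|_{L^2}^2\big)\intd t\le C(1+T)$, which is insufficient for a $T$-independent supremum bound. This is precisely where the time derivatives of the nonlocal Lagrange multipliers and the energy dissipation identity \eqref{eq:energy decay} must be exploited.
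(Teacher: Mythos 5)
Your proof is correct, but it takes a genuinely different route from the paper's. The paper works directly with $\varphi(t)=\tfrac12\int_0^L\big((\partial_s^3\theta)^2+(\partial_s^3\rho)^2\big)\intd s$: it differentiates the evolution equations twice in $s$, extracts dissipation in $\partial_s^4(\theta,\rho)$, and, using Young and interpolation to bound the remainder by a constant, arrives at $\tfrac{\intd}{\intd t}\varphi(t)+\varphi(t)\le C$; the auxiliary $+\varphi$ then yields the $T$-uniform bound $\varphi(t)\le\max\{\varphi(0),C\}$ directly, without needing time-integrability of the remainder. You instead introduce $\tilde\varphi(t)=\tfrac12\int_0^L\big((\partial_t\partial_s\theta)^2+(\partial_t\partial_s\rho)^2\big)\intd s$ and differentiate the system once in $t$, trading one tier of spatial bookkeeping (no $\partial_s^4$ appears) for the need to control the time derivatives of the nonlocal multipliers; the key facts you use — that $\partial_t\theta$ and $\partial_t\rho$ are mean-zero (via Lemma~\ref{intincang} and \eqref{eq:dt lambda rho}) so the $1$D interpolation applies, and that $|\dot\lambda_\bullet|\le C(\|\partial_t\theta\|_{L^2}+\|\partial_t\rho\|_{L^2})$ — indeed hold and are enough to make the non-absorbed remainder time-integrable with $T$-independent total by \eqref{eq:energy decay}. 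Closing via the once-differentiated flow equations is then straightforward. Your variant is closer in spirit to Proposition~\ref{dtLinfL2}, while the paper's argument stays entirely within spatial derivatives of $(\theta,\rho)$. One small inaccuracy in your closing remark: estimating $\tfrac12\int\big((\partial_s^3\theta)^2+(\partial_s^3\rho)^2\big)$ directly is not a dead end — the paper's $+\varphi$ device does give a $T$-uniform bound; it is the lower-order quantity $\tfrac12\int\big((\partial_s^2\theta)^2+(\partial_s^2\rho)^2\big)$ you mention whose time-integral bound is insufficient, and the paper avoids this precisely by working one derivative higher.
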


\begin{proof} 
Again, we use a Gronwall argument. To that end, we define
\begin{align*}
    \varphi(t):=\frac{1}{2}\int_0^L\left((\partial_s^3\theta)^2+(\partial_s^3\rho)^2\right)\intd s
\end{align*}
for $t\in[0,T)$ and obtain after integration by parts
\begin{align}
\label{dtphi3}
    \qquad\frac{\intd }{\intd t}\varphi(t)+\varphi(t)=\!-\int_0^L\!\!\partial_s^4\theta\,\partial_t\partial_s^2\theta+\partial_s^4\rho\,\partial_t\partial_s^2\rho\intd s - \frac{1}{2}\int_0^L \!\!\partial_s^4\theta\partial_s^2\theta + \partial_s^4\rho\partial_s^2\rho\intd s.
\end{align}
Differentiating twice the evolution equations \eqref{eq:flow equation} with respect to $s$ yields
\begin{align*}
    \partial_s^2\partial_t\theta
    &=3\beta^\prime(\rho)\partial_s\rho\partial_s^3\theta+\beta(\rho)\partial_s^4\theta+3\beta^{\prime\prime}(\rho)(\partial_s\rho)^2\partial_s^2\theta
    +3\beta^\prime(\rho)\partial_s^2\rho\partial_s^2\theta\\
    &\quad+\beta^{\prime\prime\prime}(\rho)(\partial_s\rho)^3(\partial_s\theta-c_0)
    +3\beta^{\prime\prime}(\rho)\partial_s^2\rho\partial_s\rho(\partial_s\theta-c_0)
    +\beta^\prime(\rho)\partial_s^3\rho(\partial_s\theta-c_0)\\
    &\quad-\lambda_{\theta1}\sin\theta(\partial_s\theta)^2+\lambda_{\theta1}\cos\theta\partial_s^2\theta
    +\lambda_{\theta2}\cos\theta(\partial_s\theta)^2+\lambda_{\theta2}\sin\theta\partial_s^2\theta,\\
    \partial_s^2\partial_t\rho
    &=\mu\partial_s^4\rho-\frac{1}{2}\beta^{\prime\prime\prime}(\rho)(\partial_s\rho)^2(\partial_s\theta-c_0)^2-\frac{1}{2}\beta^{\prime\prime}(\rho)\partial_s^2\rho(\partial_s\theta-c_0)^2\\
    &\quad-2\beta^{\prime\prime}(\rho)\partial_s\rho(\partial_s\theta-c_0)\partial_s^2\theta
    -\beta^\prime(\rho)(\partial_s^2\theta)^2-\beta^\prime(\rho)(\partial_s\theta-c_0)\partial_s^3\theta.
\end{align*}
By continuity, Proposition \ref{prop:ltest1} and \eqref{eq:Jbeta}, we find that $|\beta^{(k)}(\rho)|$ is bounded by $C$ for $k=0, \dots , 3$. Moreover, due to the embedding $W^{2,2}(0,L)\hookrightarrow C^1([0,L])$, Proposition \ref{prop:ltest1} and Proposition \ref{ltestds2} we know that
$\sup_{s\in[0,L]}\left\vert\partial_s\theta\right\vert$
is bounded by $C$ for all $t\in[0,T)$. 
The same applies to $\sup_{s\in[0,L]}\left\vert\partial_s\rho\right\vert$.
This implies that after plugging the last two equations into \eqref{dtphi3} we get
\begin{align}
\label{dtphi4}
    \frac{\intd }{\intd t}\varphi(t) + \varphi(t)
    \leq &-\inf_{J}\beta(\rho)\int_0^L(\partial_s^4\theta)^2\intd s-\mu\int_0^L(\partial_s^4\rho)^2\intd s +C R(t),
\end{align}
where $R(t)$ is a sum of terms of the form
\begin{align}\label{eq:dtphi4,5}
    \qquad
    \int_0^L\!\left\vert\partial_s^4f \right\vert\!\left\vert\partial_s^3g\right\vert\!\intd s,\;
    \int_0^L\!\left\vert\partial_s^4f \right\vert\!\left\vert\partial_s^2g\right\vert\!\intd s,\;
    \int_0^L\!\left\vert\partial_s^4f \right\vert\!\intd s,\; 
    \int_0^L \!\left\vert\partial_s^4 f\right\vert\! \left\vert\partial_s^2f\right\vert\! \left\vert\partial_s^2g\right\vert\!\intd s
\end{align}
for $f,g\in \{\theta,\rho\}$. For the first term, we use Young's inequality, integration by parts and Proposition \ref{ltestds2} and estimate
\begin{align*}
    \int_0^L\left\vert\partial_s^4f \right\vert\left\vert\partial_s^3g\right\vert\intd s
    &\leq\delta\int_0^L\left(\partial_s^4f \right)^2\intd s
    +C(\delta)\int_0^L\left(\partial_s^3g\right)^2\intd s\\
    &\leq \delta\int_0^L\left(\partial_s^4f \right)^2\intd s
    +\delta\int_0^L\left(\partial_s^4g \right)^2\intd s
    +C(\delta).
\end{align*}
Similarly, for terms of the second and third type we have
\begin{align*}
    \int_0^L\left(\left\vert\partial_s^4f \right\vert\left\vert\partial_s^2g\right\vert + \left\vert \partial_s^4f\right\vert\right)\intd s
    &\leq\delta\int_0^L\left(\partial_s^4f \right)^2\intd s
    +C(\delta).
\end{align*}
The terms of the fourth type in \eqref{eq:dtphi4,5} can be controlled by interpolation, cf.\ \cite[p.~233]{Brezis} (using $\int_0^L\partial_s^k f \intd s=0$ for $k\geq 2$, $f\in\{\theta,\rho\}$) to estimate
\begin{align*}
    &\int_0^L\left\vert\partial_s^4f \right\vert\left\vert\partial_s^2f\right\vert \left\vert\partial_s^2g\right\vert\intd s
    \leq\left\Vert\partial_s^4f\right\Vert_{L^2(0,L)}\left\Vert\partial_s^2f\right\Vert_{L^4(0,L)}\left\Vert\partial_s^2g\right\Vert_{L^4(0,L)}\\
    &\qquad\leq C\left\Vert\partial_s^4f\right\Vert_{L^2(0,L)}
    \left\Vert\partial_s^4f\right\Vert_{L^2(0,L)}^\frac{1}{8}\left\Vert\partial_s^2f\right\Vert_{L^2(0,L)}^\frac{7}{8}\left\Vert\partial_s^4g\right\Vert_{L^2(0,L)}^\frac{1}{8}\left\Vert\partial_s^2g\right\Vert_{L^2(0,L)}^\frac{7}{8}\\
    &\qquad\leq \delta\left\Vert\partial_s^4f\right\Vert_{L^2(0,L)}^2+\delta\left\Vert\partial_s^4g\right\Vert_{L^2(0,L)}^2
    +C(\delta)
\end{align*}
as above by Proposition \ref{ltestds2} and Young's inequality for $\delta>0$ to be chosen.
This way, we have estimated the remainder term $R(t)$ in \eqref{dtphi4}. 
Consequently, if we now use $\inf_J\beta\geq C$ by \eqref{eq:beta>0} and choose $\delta>0$ sufficiently small, after absorbing we get
\begin{align*}
    \frac{\intd }{\intd t}\varphi(t)+\varphi(t)\leq C.
\end{align*}
With Gronwall's lemma, we conclude that $\varphi(t)\leq C$ for all $t\in[0,T)$.
\end{proof}

\begin{rem}\label{rem:global_Wm2_bound}
It is also possible to bound $\sup_{t\in [0,T)}||(\theta,\rho)||_{W^{m,2}(0,L)}\leq C(m)$, $m\in \NN,$ by essentially the same arguments as in Proposition \ref{ltestds3}. 
\end{rem}

\subsection{Proof of Theorem \ref{longtimeex}}\label{sec:thm1.3}

Using the previous propositions, we are now able to prove long-time existence.
\begin{proof}[Proof of Theorem \ref{longtimeex}]
Let $0<T_{\textup{max}}\leq\infty$ be the maximal existence time of the smooth solution of \eqref{eq:flow equation} with initial datum $(\theta,\rho)(0,\cdot)=(\theta_0, \rho_0)$ in $C^{1+\alpha}([0,L])$. 
We want to show that the assumption $T_{\textup{max}}<\infty$ yields a contradiction as we can extend the solution past $T_{\textup{max}}$. 

Assume $T_{\textup{max}}<\infty$ and let $\delta \in (0, T_{\textup{max}})$. We have $(\theta, \rho)\in C^\infty([\delta,T_{\textup{max}})\times [0,L])$. 
By Propositions \ref{prop:ltest1} and \ref{ltestds3}, there is a constant $C>0$ depending only on $(\theta,\rho)(\delta,\cdot)$ and the model parameters such that
\begin{align}
\label{eq:boundW32}
    \sup_{t\in[\delta,T_{\textup{max}})}\left\Vert\theta\right\Vert_{W^{3,2}(0,L)}\leq C
    \quad\text{ and }\quad
    \sup_{t\in[\delta,T_{\textup{max}})}\left\Vert\rho\right\Vert_{W^{3,2}(0,L)}\leq C.
\end{align}
We consider an arbitrary sequence $(t_n)_{n\in \NN}$ with $t_n\nearrow T_{\textup{max}}$. Let $\tilde{\alpha}\in (0,\frac{1}{2})$. The bounds in \eqref{eq:boundW32} together with the compact embedding $W^{3,2}(0,L)\hookrightarrow C^{2+\tilde\alpha}([0,L])$ and the closedness of $h^{2+\tilde\alpha}([0,L])$ yield the existence of $\tilde\theta_0\in h^{2+\tilde\alpha}([0,L])$ and $\tilde{\rho}_0\in h^{2+\tilde\alpha}([0,L])$ such that after passing to a subsequence
\begin{align}\label{eq:LTE conv below1}
    \theta(t_n,\cdot)\to\tilde\theta_0 
    \quad\text{ and }\quad
    \rho(t_n,\cdot)\to\tilde\rho_0 \quad \text{in } C^{2+\tilde\alpha}([0,L]) \quad \text{as }n\to \infty.
\end{align}
Suppose $t_n'\nearrow T_{\textup{max}}$ denotes another sequence such that \eqref{eq:LTE conv below1} holds with $t_n'$ instead of $t_n$ and $(\tilde\theta_0', \tilde\rho_0')$ instead of $(\tilde\theta_0, \tilde\rho_0)$. Using \eqref{eq:flow equation}, Propositions \ref{prop:ltest1}, \ref{lmultbound} and \eqref{eq:boundW32}, we find $\left\Vert \partial_t \theta\right\Vert_{W^{1,2}(0,L)}\leq C$ for $t\in [\delta, T_{\textup{max}})$, where $C>0$ again only depends on the model parameters and $(\theta, \rho)(\delta,\cdot)$. Using that $W^{1,2}(0,L)\hookrightarrow C^{\tilde\alpha}([0,L])$, we find
\begin{align}\label{eq:LTE conv limitsequal}
    \left\Vert \theta(t_n,\cdot) - \theta(t_n',\cdot)\right\Vert_{C^{\tilde\alpha}([0,L])}\leq \Big\Vert\int_{t_n}^{t_n'} \partial_t \theta(\tau, \cdot)\intd \tau \Big\Vert_{C^{\tilde\alpha}([0,L])} \leq C|t_n-t_n'|,
\end{align}
after suitably modifying $C>0$.
Taking $n\to\infty$ yields $\tilde\theta_0'=\tilde\theta_0$. With exactly the same argument we find $\tilde\rho_0'=\tilde\rho_0$. Consequently,
\begin{align}\label{eq:LTE conv below2}
    \theta(t,\cdot)\to\tilde\theta_0 \quad\text{ and }\quad
    \rho(t,\cdot)\to\tilde\rho_0 \quad\text{ in } C^{2+\tilde\alpha}([0,L])\quad \text{as }t\nearrow T_{\textup{max}}.
\end{align}
It is clear that $(\tilde\theta_0,\tilde\rho_0)$ satisfies \eqref{eq:bcid}. 
Thus, Theorem \ref{thm:Main STE} and Remark \ref{rem:ste+} yield the existence of $\tilde{T}>0$ and a unique solution $(\tilde\theta,\tilde\rho)\in C^\infty\big((0,\tilde{T})\times[0,L]\big)$ of \eqref{eq:flow equation} with
\begin{align}
\label{eq:initialtilde}
 \tilde\theta(t,\cdot)\to\tilde\theta_0 
    \quad\text{ and }\quad
    \tilde\rho(t,\cdot)\to\tilde\rho_0 \quad\text{ in } C^{2+\tilde\alpha}([0,L])\quad \text{as }t\searrow 0.\end{align}
We take this solution to extend the solution $(\theta,\rho)$ past $T_{\textup{max}}$ by defining 
\begin{align*}
    (\bar{\theta},\bar{\rho})(t,s):=
    \begin{cases}
    (\theta,\rho)(t,s) &\text{ for } t\in(0,T_{\textup{max}}),\\
    (\tilde\theta_0,\tilde\rho_0)(s) &\text{ for } t=T_{\textup{max}},\\
    (\tilde\theta,\tilde\rho)(t-T_{\textup{max}},s) &\text{ for } t\in(T_{\textup{max}},T_{\textup{max}}+\tilde{T}).
    \end{cases}
\end{align*}
We now claim that 
\begin{align}\label{eq:LTE bar smooth}
(\bar{\theta},\bar{\rho})\in C^\infty((0,T_{\textup{max}}+\tilde{T})\times[0,L]).
\end{align}
Fix $T\in (T_{\textup{max}},T_{\textup{max}}+\tilde{T})$.
Examining $t\nearrow T_{\textup{max}}$ and using \eqref{eq:LTE conv below2}, respectively $t\searrow T_{\textup{max}}$ and using \eqref{eq:initialtilde}, we conclude 
\begin{align*} 
(\bar{\theta},\bar{\rho})\in\textup{BUC}\left([\delta,T];C^{2+\tilde\alpha}([0,L])\right).
\end{align*}
By \eqref{eq:flow equation}, we may write $\partial_t(\theta,\rho)$ in terms of $\partial_s^2(\theta,\rho),\partial_s(\theta,\rho)$ and $(\theta,\rho)$ on $(0, T_\textup{max})$ and similarly for $\partial_t(\tilde\theta, \tilde\rho)$ on $(0, \tilde{T})$. Hence $\partial_t(\bar{\theta}, \bar{\rho})$ exists on $[\delta, T_\textup{max})\cup (T_\textup{max}, T]$ and possesses a continuous extension at $t=T_\textup{max}$, so that we find
\begin{align*} 
\partial_t(\bar{\theta},\bar{\rho})\in\textup{BUC}\left([\delta,T];C^{\tilde\alpha}([0,L])\right).
\end{align*}

Consequently, we obtain
\begin{align}\label{eq:LTE BUC reg delta}
(\bar{\theta},\bar{\rho})\in\textup{BUC}^1\left([\delta,T];C^{\tilde\alpha}([0,L])\right)\cap\textup{BUC}\left([\delta,T];C^{2+\tilde\alpha}([0,L])\right).
\end{align}

By \eqref{eq:STE BUC regularity}, for every $T'<T_{\textup{max}}$ the solution $(\theta, \rho)$ satisfies
\begin{align}\label{eq:LTE BUC Tmax}
    \quad(\theta,\rho)\in\textup{BUC}_{1-\bar{\eta}}^1\left([0,T']; h^{\bar{\alpha}}\left([0,L]\right)\right)\cap \textup{BUC}_{1-\bar{\eta}}\left([0,T'];h^{2+\bar{\alpha}}\left([0,L]\right)\right),
\end{align}
where $\bar{\alpha}\in (0,1)$ and $\bar{\eta}\in (\frac12,1)$ are chosen such that $2\bar{\eta}+\bar{\alpha}=1+\alpha$. Since the time weight $t^{1-\bar{\eta}}$ only plays a role near $t=0$, from \eqref{eq:LTE BUC reg delta} and \eqref{eq:LTE BUC Tmax} we conclude that
\begin{align*}
(\bar{\theta},\bar{\rho})\in\textup{BUC}^1_{1-\bar{\eta}}\left([0,T];h^{\hat\alpha}([0,L])\right)\cap\textup{BUC}_{1-\bar{\eta}}\left([0,T];h^{2+\hat\alpha}([0,L])\right),
\end{align*}
for $0<\hat\alpha<\min\{\bar{\alpha}, \tilde{\alpha}\}$, using Lemma \ref{lem:Höldemb}. If we transfer this back to the periodic setting as in Section \ref{subsec:trafoper}, we may apply Lemma \ref{smoothing} to deduce that $(\bar{\theta}, \bar{\rho})\in C^\infty((0,T)\times [0,L])$. Since $T\in (T_{\textup{max}}, T_{\textup{max}}+\tilde{T})$ was arbitrary, we conclude that \eqref{eq:LTE bar smooth} is satisfied. 

Consequently, we have extended the solution $(\theta,\rho)$ smoothly past $T_{\textup{max}}$, a contradiction, so $T_{\textup{max}}=\infty$ has to hold.
\end{proof}

\section{Convergence result}\label{sec:longtimebehavior}
In this section, we prove the convergence result, Theorem \ref{thm:convergence}.
Our main ingredient is a constrained version of the {\L}ojasiewicz--Simon inequality \cite{ConstrLoja}. The constraint is given by the zero set of the functional
\begin{align}
\label{eq:defG}
    \mathcal{G}(\theta, \rho) = \Big( \int_0^L \cos\theta\intd s,\; \int_0^L \sin \theta\intd s,\; \int_0^L \rho \intd s - m\Big),
\end{align}
cf.\ \eqref{eq:constraint conservation}, where $m\in \R$ corresponds to the fixed total mass, determined by the initial density, i.e.\ $m=\int_0^L \rho_0\intd s$. We consider the Banach space of periodic Sobolev functions
\begin{align}
W^{k,2}_{\mathrm{per}}(0,L) := \{ u\in W^{k,2}(0,L) : \partial_s^\ell u(L)=\partial_s^\ell u(0) \text{ for }\ell = 0,\dots, k-1\},\; k\in \NN.
\end{align}
By the choice of the Lagrange multipliers, cf.\ \eqref{eq:dt lambda theta} and \eqref{eq:dt lambda rho}, the gradient flow remains in the closed set
\begin{align}\label{eq:defB1}
\sB := \{ (\theta,\rho)\in W^{2,2}_{\mathrm{per}}(0,L;\R^2)+(\phi,0) : \sG(\theta,\rho)=0\}
\end{align}
for all $t\geq 0$ with $\phi$ as in \eqref{eq:defphi}.
To apply the results in \cite{ConstrLoja}, we need to work in Banach spaces which is why we consider the shifted energies
\begin{align}
    &E_\mu(u,\rho) = \sE_\mu(\phi+u, \rho),\quad
    G(u,\rho) = \sG(\phi+u, \rho) \quad \text{for }(u,\rho)\in W^{2,2}_{\mathrm{per}}(0,L;\R^2).
\end{align}
 Here we work only in the domain of the $L^2$-gradient of the functionals, and not in the energy space $W^{1,2}(0,L)$. 
 By a direct computation, the $L^2$-gradients of $E_\mu$ (with $\nabla E_{\mu}(u, \rho) = (\nabla_u E_\mu(u,\rho), \nabla_\rho E_\mu(u,\rho)))$ and of the components of $G$ are given by
\begin{align}
    &\nabla_u E_{\mu}(u, \rho) = -\partial_s\big(\beta(\rho)(\partial_su+\partial_s\phi-c_0)\big),\\
    & \nabla_\rho E_\mu(u,\rho) = -\mu\partial_s^2\rho +\frac{1}{2}\beta'(\rho)(\partial_su+\partial_s\phi-c_0)^2,\\
    &\nabla G ^1(u, \rho) = \left(-\sin(u+\phi),0\right),\;
    \nabla G ^2(u, \rho) = \left(\cos(u+\phi),0\right), \;
    \nabla G ^3(u, \rho) = \left(0, 1\right).
\end{align}

\subsection{Analyticity}

We first discuss analyticity properties of the energy and the constraint. 
A concise overview of the relevant properties of analytic functions on Banach spaces can be found in \cite[Section 2.1]{ConstrLoja}. In the following,
$C_{\mathrm{per}}([0,L])$ denotes the Banach space of $L$-periodic continuous functions on $[0,L]$, equipped with the supremum norm.

\begin{lem}\label{lem:analyticity}
    Let $\beta\colon \R\to\R$ be analytic. The maps
    \begin{align}
        &W^{1,2}_{\mathrm{per}}(0,L)\to C_{\mathrm{per}}([0,L]),\; \rho\mapsto \beta(\rho),\\
    &W^{k,2}_{\mathrm{per}}(0,L)\to L^2(0,L),\; \theta\mapsto \partial_s^{k}\theta \text{ for }k=1,2
    \end{align}
   are analytic. Moreover, $E_\mu, G$ and their gradients $\nabla E_\mu, \nabla G^j\colon W^{2,2}_{\mathrm{per}}(0,L;\R^2)\to L^2(0,L;\R^2)$ are analytic for $j=1, 2, 3$.
\end{lem}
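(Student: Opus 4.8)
The proof of Lemma \ref{lem:analyticity} proceeds in two stages: first establish analyticity of the three elementary building-block maps, then assemble the energy, constraint, and their gradients from these via composition, multiplication, and addition, each of which preserves analyticity. Throughout I would use the characterization (see \cite[Section 2.1]{ConstrLoja}) that a map between Banach spaces is analytic at a point if it is locally given by a convergent power series, equivalently (in this setting) if it is $C^\infty$ with locally uniformly bounded, appropriately summable multilinear derivatives; and the standard facts that (i) bounded multilinear maps are analytic, (ii) compositions of analytic maps are analytic, and (iii) pointwise products $C_{\mathrm{per}}([0,L]) \times C_{\mathrm{per}}([0,L]) \to C_{\mathrm{per}}([0,L])$ and $C_{\mathrm{per}}([0,L]) \times L^2(0,L) \to L^2(0,L)$ are bounded bilinear, hence analytic.

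\textbf{Step 1: the substitution operator $\rho \mapsto \beta(\rho)$.} Since $W^{1,2}_{\mathrm{per}}(0,L) \hookrightarrow C_{\mathrm{per}}([0,L])$ continuously (even compactly) in one space dimension, it suffices to show $\rho \mapsto \beta(\rho)$ is analytic as a map $C_{\mathrm{per}}([0,L]) \to C_{\mathrm{per}}([0,L])$. Fix $\rho_0 \in C_{\mathrm{per}}([0,L])$ with range in a compact interval $J$. Since $\beta$ is real analytic, it extends holomorphically to a complex neighborhood of $J$, so there is $r>0$ and a uniform bound $|\beta^{(n)}(x)| \le C n! \, r^{-n}$ for all $x$ in a slightly larger interval and all $n$. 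Then the Taylor series $\beta(\rho_0 + h) = \sum_{n\ge 0} \frac{1}{n!}\beta^{(n)}(\rho_0)\, h^n$, interpreted as a series of homogeneous polynomials $h \mapsto \frac{1}{n!}\beta^{(n)}(\rho_0)\cdot h^n$ in $C_{\mathrm{per}}([0,L])$, converges absolutely for $\|h\|_\infty < r$ by the pointwise estimate and the fact that $\|h^n\|_\infty = \|h\|_\infty^n$. This exhibits $\beta$ as an analytic Nemytskii operator; the same reasoning localizes around any point, giving global analyticity on $W^{1,2}_{\mathrm{per}}(0,L)$.

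\textbf{Step 2: the derivative maps and assembly.} The maps $\theta \mapsto \partial_s^k \theta$ for $k=1,2$ are bounded \emph{linear} operators $W^{k,2}_{\mathrm{per}}(0,L) \to L^2(0,L)$, hence trivially analytic. Now the gradients are built by finitely many analytic operations: for instance $\nabla_u E_\mu(u,\rho) = -\partial_s\big(\beta(\rho)(\partial_s u + \partial_s\phi - c_0)\big)$ is the composition of $W^{2,2}_{\mathrm{per}}(0,L;\R^2) \to W^{1,2}_{\mathrm{per}}(0,L)$, $(u,\rho) \mapsto \beta(\rho)(\partial_s u + \partial_s\phi - c_0)$ — analytic because $\beta(\rho) \in C_{\mathrm{per}} \cap W^{1,2}_{\mathrm{per}}$ by Step 1 and a short argument (the chain rule keeps $\beta(\rho)$ in $W^{1,2}$ since $\partial_s[\beta(\rho)] = \beta'(\rho)\partial_s\rho$ is a product of an $L^2$ and an $L^\infty$ function), multiplied by the affine map $u \mapsto \partial_s u + \partial_s\phi - c_0$ valued in $W^{1,2}_{\mathrm{per}}(0,L)$, using that multiplication $W^{1,2}_{\mathrm{per}} \times W^{1,2}_{\mathrm{per}} \to W^{1,2}_{\mathrm{per}}$ is bounded bilinear — followed by the bounded linear map $\partial_s\colon W^{1,2}_{\mathrm{per}}(0,L) \to L^2(0,L)$. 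Similarly, $\nabla_\rho E_\mu(u,\rho) = -\mu\partial_s^2\rho + \tfrac12 \beta(\rho)\,(\partial_s u + \partial_s\phi - c_0)^2$ combines the bounded linear map $\partial_s^2$ with a product of the analytic Nemytskii operator $\beta'(\rho)$ (Step 1 applied to $\beta'$, which is again analytic) and the square of an affine $L^2$-valued-in-$C_{\mathrm{per}}$ map — here one uses $W^{1,2}_{\mathrm{per}} \hookrightarrow C_{\mathrm{per}}$ so that the square lands in $C_{\mathrm{per}}$, and then multiplication $C_{\mathrm{per}} \times C_{\mathrm{per}} \to L^2$ is bounded bilinear. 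For $E_\mu$ and $G$ themselves, note $E_\mu(u,\rho) = \tfrac12\int_0^L \big(\beta(\rho)(\partial_s u + \partial_s\phi - c_0)^2 + \mu(\partial_s\rho)^2\big)\intd s$ is the composition of the analytic $W^{2,2}$-to-$L^1$ integrand map with the bounded linear functional $\int_0^L \cdot\ \intd s$; and $\nabla G^1, \nabla G^2$ involve only $u \mapsto \sin(u+\phi), \cos(u+\phi)$, which are analytic Nemytskii operators by exactly the Step 1 argument (the entire functions $\sin, \cos$ have uniform complex-analytic bounds), while $\nabla G^3 \equiv (0,1)$ is constant. Analyticity of $G$ follows likewise from integrating these.

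\textbf{Main obstacle.} The only genuinely non-formal point is Step 1 — verifying that the Nemytskii operator associated to a real-analytic scalar function is analytic between the relevant function spaces, with the correct power-series estimates. The subtlety is purely that one must work in an algebra: $C_{\mathrm{per}}([0,L])$ (or $W^{1,2}_{\mathrm{per}}$ in $1$D, which embeds into it and is itself a Banach algebra) makes $\|h^n\| \lesssim \|h\|^n$ available, which is exactly what converts the scalar analytic bound $|\beta^{(n)}| \le C n!\, r^{-n}$ into convergence of the operator-valued Taylor series. Once this is in place, everything else is bookkeeping: finitely many applications of ``composition, sum, and product of analytic maps is analytic,'' using the continuous Sobolev embeddings in one space dimension and the Banach-algebra structure of $W^{1,2}_{\mathrm{per}}(0,L)$ and $C_{\mathrm{per}}([0,L])$. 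I would also remark that the stated codomains are chosen precisely so that each factor lands in a space closed under the required product, which is why the gradients are asserted to map into $L^2(0,L;\R^2)$ rather than a smaller space.
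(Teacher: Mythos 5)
Your argument is essentially the paper's: prove analyticity of the Nemytskii substitution operator, then assemble the energy, constraint, and their gradients by composition, products, and sums. The one real difference is that you prove the Nemytskii analyticity directly (via the Cauchy estimate $|\beta^{(n)}|\le Cn!\,r^{-n}$ and the Banach-algebra property of $C_{\mathrm{per}}([0,L])$), whereas the paper cites \cite[Theorem~6.8]{AppellZabrejko} for this. One small imprecision in your assembly: for $\nabla_u E_\mu$ you factor through $W^{1,2}_{\mathrm{per}}(0,L)$ before applying $\partial_s$, which requires analyticity of $\rho\mapsto\beta(\rho)$ \emph{into} $W^{1,2}_{\mathrm{per}}(0,L)$, not merely the containment $\beta(\rho)\in W^{1,2}_{\mathrm{per}}(0,L)$ that your chain-rule aside provides; Step~1 as written only targets $C_{\mathrm{per}}([0,L])$. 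This is easily repaired --- either run the same Taylor-series estimate in the Banach algebra $W^{1,2}_{\mathrm{per}}(0,L)$ (using $\|\tfrac1{n!}\beta^{(n)}(\rho_0)\|_{W^{1,2}}\lesssim (n+1)r^{-n-1}$), or, more simply, expand $\partial_s\big[\beta(\rho)(\partial_s u+\partial_s\phi-c_0)\big]=\beta'(\rho)\partial_s\rho\,(\partial_s u+\partial_s\phi-c_0)+\beta(\rho)\partial_s^2 u$ first so that every factor lands directly in $C_{\mathrm{per}}([0,L])$ or $L^2(0,L)$ and only the $C_{\mathrm{per}}$ target of the Nemytskii operator is needed.
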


\begin{proof}
The first part follows from the Sobolev embedding $W^{1,2}_{\mathrm{per}}(0,L)\to C_{\mathrm{per}}([0,L])$ and since the Nemytskii operator $C_{\mathrm{per}}([0,L])\ni u\mapsto \beta(u)\in C_{\mathrm{per}}([0,L])$ is analytic by  \cite[Theorem 6.8]{AppellZabrejko}. 

The rest follows using that the composition and sum of analytic maps is again analytic and that any bounded multilinear map is analytic, see \cite[Section 2.1]{ConstrLoja}.
\end{proof}

\subsection{The constrained \texorpdfstring{{\L}ojasiewicz--Simon}{Lojasiewicz-Simon} gradient inequality}

Besides analyticity, we will also need to verify certain Fredholm and compactness properties of the derivatives of $\nabla{E}_\mu$ and $\nabla{G}$, cf.\ \cite[Corollary 5.2]{ConstrLoja}. 
First, we compute these derivatives.
Let $(u, \rho), (v, \sigma)\in W^{2,2}_{\mathrm{per}}(0,L;\R^2)$. 
Then we have
\begin{align}
    (\nabla_u {E}_\mu)'(u,\rho)(v,\sigma) &= -\beta''(\rho)\partial_s \rho(\partial_su+\partial_s \phi-c_0)\sigma - \beta'(\rho)\partial_s^2 u~\sigma\\
    &\qquad~- \beta'(\rho)(\partial_su+\partial_s\phi-c_0)\partial_s \sigma - \partial_s\big(\beta(\rho)\partial_s v\big),\\
    (\nabla_\rho {E}_\mu)'(u,\rho)(v,\sigma) &= \frac{1}{2} \beta''(\rho)(\partial_su+\partial_s\phi-c_0)^2\sigma \\
    &\qquad~+ \beta'(\rho)(\partial_su+\partial_s\phi-c_0)\partial_sv - \mu\partial_s^2\sigma. \label{eq:nabla E'}
\end{align}
Moreover, we have
\begin{align}
    (\nabla G^1)'(u, \rho)(v,\sigma) &= (-\cos(u+\phi) v, 0), \\
    (\nabla G^2)'(u, \rho)(v,\sigma) &= (-\sin(u+\phi) v, 0), \\
    (\nabla G^3)'(u, \rho)(v,\sigma) &= (0,0). 
\end{align}
\begin{lem}\label{lem:nablaE'Fredholm}
The operator 
$
(\nabla {E}_\mu)'(u, \rho)\colon W^{2,2}_{\mathrm{per}}(0,L;\R^2)\to L^2(0,L;\R^2)    
$
is Fredholm of index zero for all $(u, \rho)\in W^{2,2}_{\mathrm{per}}(0,L;\R^2)$. 
\end{lem}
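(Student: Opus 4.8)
The plan is to decompose $(\nabla E_\mu)'(u,\rho)$ into a principal, essentially diagonal second-order part, which is Fredholm of index zero, plus a lower-order remainder, which is compact; the claim then follows from the stability of the Fredholm index under compact perturbations. Concretely, I would write
\begin{align*}
A(v,\sigma) := \Big(-\partial_s\big(\beta(\rho)\partial_s v\big),\ -\mu\,\partial_s^2\sigma\Big), \qquad P := (\nabla E_\mu)'(u,\rho) - A,
\end{align*}
both regarded as bounded linear operators $W^{2,2}_{\mathrm{per}}(0,L;\R^2)\to L^2(0,L;\R^2)$; boundedness of $(\nabla E_\mu)'(u,\rho)$ is contained in Lemma~\ref{lem:analyticity}, and that of $A$ is immediate.

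For the principal part, observe that $\rho\in W^{2,2}_{\mathrm{per}}(0,L)\hookrightarrow C^1([0,L])$ is bounded, so $\beta(\rho)\in C^1([0,L])$ and, since $\beta>0$ is continuous, $\beta(\rho)\geq c>0$ on $[0,L]$; hence both diagonal entries of $A$ are uniformly elliptic second-order operators with periodic boundary conditions. For $\Lambda>0$ the bilinear form associated with $A+\Lambda\,\mathrm{Id}$ is bounded and coercive on $W^{1,2}_{\mathrm{per}}(0,L;\R^2)$, so by Lax--Milgram and elliptic regularity $A+\Lambda\,\mathrm{Id}\colon W^{2,2}_{\mathrm{per}}(0,L;\R^2)\to L^2(0,L;\R^2)$ is an isomorphism, in particular Fredholm of index zero. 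Since $W^{2,2}_{\mathrm{per}}(0,L;\R^2)\hookrightarrow L^2(0,L;\R^2)$ is compact by Rellich--Kondrachov, $A=(A+\Lambda\,\mathrm{Id})-\Lambda\,\mathrm{Id}$ differs from an isomorphism by a compact operator and is therefore itself Fredholm of index zero.

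For the remainder, inspecting the formulas for $(\nabla_u E_\mu)'$ and $(\nabla_\rho E_\mu)'$ shows that every summand of $P$ is either the product of an $L^\infty([0,L])$-coefficient — built from $\beta(\rho),\beta'(\rho),\beta''(\rho),\partial_s u,\partial_s\rho,\partial_s\phi$, all bounded via $W^{2,2}_{\mathrm{per}}\hookrightarrow C^1$ — with a \emph{first}-order derivative of $(v,\sigma)$, or the product of an $L^2([0,L])$-coefficient (the terms carrying $\partial_s^2 u$) with $(v,\sigma)$ itself. In the first case the map factors as $W^{2,2}_{\mathrm{per}}(0,L;\R^2)\to W^{1,2}_{\mathrm{per}}(0,L;\R^2)\hookrightarrow L^2(0,L;\R^2)$ with the last embedding compact; in the second case it factors through the compact embedding $W^{2,2}_{\mathrm{per}}(0,L;\R^2)\hookrightarrow C([0,L];\R^2)$ followed by the bounded multiplication $C([0,L];\R^2)\to L^2(0,L;\R^2)$. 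In either case the summand, hence $P$, is compact. Therefore $(\nabla E_\mu)'(u,\rho)=A+P$ is Fredholm of index zero. I expect the only genuine effort to be bookkeeping: correctly isolating the principal part and checking that each of the many terms in the derivative formulas really is of the stated lower-order, $L^\infty$- or $L^2$-coefficient type so that the compactness argument applies; no idea beyond standard one-dimensional elliptic theory is needed.
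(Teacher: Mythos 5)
Your proof uses the same decomposition as the paper: write $(\nabla E_\mu)'(u,\rho) = A + K$ with $A(v,\sigma)=\big(-\partial_s(\beta(\rho)\partial_s v),\, -\mu\partial_s^2\sigma\big)$ and $K$ the lower-order remainder, and show $K$ is compact via Rellich--Kondrachov and boundedness of the coefficients coming from $W^{2,2}\hookrightarrow C^1$. The one genuine difference is in establishing that $A$ itself is Fredholm of index zero. The paper asserts that $\partial_s(\beta(\rho)\partial_s\,\cdot\,)\colon W^{2,2}_{\mathrm{per}}(0,L)\to L^2(0,L)$ is bijective ``as a consequence of the Riesz representation theorem'' and hence that $A$ is an isomorphism; as written this is not quite right, since constants lie in the kernel and the range is $L^2$-orthogonal to constants (take $\beta\equiv 1$, $f\equiv 1$: no periodic solution). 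That operator \emph{is} Fredholm of index zero, so the paper's conclusion stands, but its justification is imprecise. Your route --- add $\Lambda\,\mathrm{Id}$ to get a coercive form, apply Lax--Milgram and elliptic regularity to obtain an isomorphism, then note $\Lambda\,\mathrm{Id}$ is compact by Rellich and subtract it --- establishes that $A$ is Fredholm of index zero cleanly and correctly, and the rest of your bookkeeping for the compactness of $P$ (factoring each summand through $W^{1,2}\hookrightarrow L^2$ or $W^{2,2}\hookrightarrow C^0$) is sound. So your proposal is correct and, if anything, slightly more careful than the paper's argument.
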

\begin{proof}
We observe that by \eqref{eq:nabla E'}, we may write
\begin{align}
    (\nabla {E}_\mu)'(u, \rho)(v, \sigma) = A(v, \sigma) + K(v,\sigma),
\end{align}
where $A(v,\sigma) = \left(-\partial_s\left(\beta(\rho)\partial_s v\right), -\mu \partial_s^2\sigma\right)$ and $K\colon W^{2,2}_{\mathrm{per}}(0,L;\R^2)\to L^2(0,L;\R^2)$ only involves first and zeroth order expressions in $(v, \sigma)$. Hence $K$ is compact by the Rellich--Kondrachov theorem. Moreover, $\beta(\rho)\in C^1([0,L])$ and $\min_{[0,L]}\beta(\rho)>0$ by Sobolev embedding. Thus, for every $f\in L^2(0,L)$ there exists a unique solution $v\in W^{2,2}_{\mathrm{per}}(0,L)$ to
$\partial_s\left(\beta(\rho) \partial_s v\right) = f$
as a consequence of the Riesz representation theorem. 
Consequently, we find that $A\colon W^{2,2}_{\mathrm{per}}(0,L;\R^2)\to L^2(0,L;\R^2)$ is an isomorphism. The claim then follows using \cite[XVII, Corollaries 2.6 and 2.7]{Lang}.
\end{proof}

For simplicity, we abuse the notation and write
\begin{align}
    \nabla \sE_\mu(\theta, \rho) := \nabla {E}_\mu (\theta-\phi, \rho), \quad
    \nabla \mathcal{G}(\theta, \rho) &:= \nabla {{G}} (\theta-\phi, \rho)\label{eq:defB}
\end{align}
for $(\theta, \rho) \in W^{2,2}_{\mathrm{per}}(0,L;\R^2)+(\phi,0)$  and $\phi$ as in \eqref{eq:defphi}. 
Moreover, setting $\Lambda(\theta, \rho):= (\lambda_{\theta1}(\theta, \rho), \lambda_{\theta2}(\theta, \rho), \lambda_\rho(\theta, \rho))$, 
we define 
\begin{align}
    \Lambda(\theta, \rho)\cdot \nabla \sG(\theta, \rho) := \lambda_{\theta1}(\theta, \rho)\nabla \sG^1(\theta,\rho)+\lambda_{\theta2}(\theta, \rho)\nabla\sG^2(\theta,\rho)+\lambda_\rho(\theta, \rho)\nabla\sG^3(\theta,\rho).
\end{align}
In this notation, the evolution in \eqref{eq:flow equation} may abstractly be written as
\begin{align}\label{eq:abstract CGF}
    \partial_t(\theta,\rho)=-\nabla \sE_\mu(\theta,\rho)- \Lambda(\theta, \rho)\cdot \nabla \sG(\theta, \rho).
\end{align}
where $(\theta,\rho)\colon[0,T)\times[0,L]\to\R^2$.
This way, stationary solutions to \eqref{eq:abstract CGF} (i.e.\ solutions to \eqref{eq:stationary}) are precisely the constrained critical points of $\mathcal{E}$ subject to the constraint $\sG=0$ by the method of Lagrange multipliers.
\begin{thm}[{Constrained {\L}ojasiewicz--Simon gradient inequality}]\label{thm:loja} Let $\beta$ be real analytic and suppose that $(\bar{\theta}, \bar{\rho})\in \mathcal{B}$ (cf.\ \eqref{eq:defB1}) is a  critical point of $\sE_\mu$ subject to the constraint $\sG=0$. 
Then there exist $C, r>0$ and $\vartheta \in (0,\frac{1}{2}]$ such that for all $(\theta, \rho) \in \mathcal{B}$ with $\Vert (\theta, \rho)-(\bar{\theta}, \bar{\rho})\Vert_{W^{2,2}(0,L)} \leq r$ we have
\begin{align}
    |\sE_\mu(\theta, \rho)-\sE_\mu(\bar{\theta}, \bar{\rho})|^{1-\vartheta}
    &\leq C\Vert \nabla \sE_\mu(\theta,\rho)+\Lambda(\theta, \rho)\cdot \nabla \sG(\theta, \rho)\Vert_{L^2(0,L)}.
\end{align}
\end{thm}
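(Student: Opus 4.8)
The plan is to apply the abstract constrained {\L}ojasiewicz--Simon inequality from \cite[Corollary 5.2]{ConstrLoja} in the Banach space setting $X := W^{2,2}_{\mathrm{per}}(0,L;\R^2)$ (shifted by $(\phi,0)$), $Y := L^2(0,L;\R^2)$, with the energy $E_\mu$, the constraint $G = (G^1, G^2, G^3)$, and the duality pairing induced by the $L^2$-inner product. The abstract result requires verifying three classes of hypotheses, which I would check in order. First, \emph{analyticity}: by Lemma \ref{lem:analyticity}, $E_\mu$ and $G$ are analytic on $X$, and their $L^2$-gradients $\nabla E_\mu, \nabla G^j \colon X \to Y$ are analytic; this is already fully available. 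Second, the \emph{Fredholm property}: the abstract framework requires that at the critical point the (Lagrangian) second derivative $\nabla E_\mu'(\bar u, \bar\rho) + \sum_j \bar\lambda_j \nabla (G^j)'(\bar u, \bar\rho)$ be Fredholm of index zero as an operator $X \to Y$. By Lemma \ref{lem:nablaE'Fredholm}, $(\nabla E_\mu)'(\bar u, \bar\rho)$ is already Fredholm of index zero, and since each $(\nabla G^j)'(\bar u, \bar\rho)$ maps into first/zeroth order terms (only multiplication operators by $C_{\mathrm{per}}$-functions composed with the compact embedding $W^{2,2}\hookrightarrow C^1$), adding $\sum_j \bar\lambda_j (\nabla G^j)'(\bar u, \bar\rho)$ is a compact perturbation and preserves the Fredholm index. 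Third, the \emph{transversality/constraint nondegeneracy} condition: the derivative $G'(\bar u, \bar\rho)\colon X \to \R^3$ must be surjective. This is where I expect the only real (though mild) work: one must exhibit directions $(v, \sigma)\in W^{2,2}_{\mathrm{per}}(0,L;\R^2)$ for which the $3\times 3$ matrix with rows $G'(\bar u, \bar\rho)(v_i,\sigma_i)$ is invertible. Since $G^3$ only sees $\sigma$ via $\int_0^L \sigma\, ds$, while $G^1, G^2$ only see $v$ via $-\int_0^L \sin(\bar u + \phi)\, v\, ds$ and $\int_0^L \cos(\bar u + \phi)\, v\, ds$, surjectivity onto the last coordinate is immediate (take $\sigma \equiv 1$, $v \equiv 0$), and surjectivity onto the first two reduces to showing that the functions $s\mapsto \sin(\bar u(s) + \phi(s))$ and $s\mapsto \cos(\bar u(s) + \phi(s))$ are linearly independent in $L^2(0,L)$ --- equivalently that the curve $\bar\gamma$ with inclination angle $\bar\theta = \bar u + \phi$ is not a straight segment, which follows from $\bar\theta$ being nonconstant (guaranteed by the closedness condition \eqref{eq:angletocurve}, exactly as in Remark \ref{rem:det=0 iff theta const}). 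This gives the transversality needed; note the Lagrange multipliers $(\bar\lambda_{\theta1}, \bar\lambda_{\theta2}, \bar\lambda_\rho)$ associated to $(\bar\theta,\bar\rho)$ are then uniquely determined by the critical point equation.

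With these three ingredients in place, \cite[Corollary 5.2]{ConstrLoja} yields constants $C, r > 0$ and $\vartheta \in (0, \tfrac12]$ such that for all $(u, \rho)$ in the constraint manifold $\{G = 0\}$ with $\|(u,\rho) - (\bar u, \bar\rho)\|_{W^{2,2}} \leq r$, one has
\begin{align*}
    |E_\mu(u,\rho) - E_\mu(\bar u, \bar\rho)|^{1-\vartheta} \leq C \, \big\| \nabla E_\mu(u,\rho) + \Lambda(u,\rho)\cdot \nabla G(u,\rho) \big\|_{L^2(0,L)},
\end{align*}
where $\Lambda(u,\rho) = (\lambda_{\theta1}, \lambda_{\theta2}, \lambda_\rho)$ are precisely the Lagrange multipliers appearing in \eqref{eq:flow equation}, which are well-defined and smooth in $(u,\rho)$ near $(\bar u, \bar\rho)$ by Remark \ref{rem:det=0 iff theta const} (the determinant of $\Pi$ stays bounded below near a non-straight critical point). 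Translating back via the identifications \eqref{eq:defB} --- i.e.\ writing $\theta = u + \phi$, $\bar\theta = \bar u + \phi$, so that $\sE_\mu(\theta,\rho) = E_\mu(u,\rho)$, $\nabla\sE_\mu(\theta,\rho) = \nabla E_\mu(u,\rho)$, $\nabla\sG(\theta,\rho) = \nabla G(u,\rho)$, and the $W^{2,2}$-distance is unchanged under the shift --- gives exactly the claimed inequality for $(\theta,\rho) \in \mathcal{B}$.

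One point deserving a short verification is that the hypotheses of \cite[Corollary 5.2]{ConstrLoja} are stated in a slightly different functional-analytic packaging (typically with a Gelfand triple or a specified dual pairing); I would spell out that the choice $X = W^{2,2}_{\mathrm{per}}(0,L;\R^2)$, pivot space $Y = L^2(0,L;\R^2)$ with the $L^2$-pairing, fits their framework, using that the $L^2$-gradients computed above are genuinely the gradients with respect to this pairing (which is a direct integration-by-parts check, valid because all relevant boundary terms vanish by periodicity). The main obstacle is therefore not any deep estimate but rather the bookkeeping to match our concrete setup to the abstract theorem and, within that, the transversality/linear-independence argument for $\sin(\bar\theta), \cos(\bar\theta)$; everything else (analyticity, Fredholmness) is already recorded in Lemmas \ref{lem:analyticity} and \ref{lem:nablaE'Fredholm}.
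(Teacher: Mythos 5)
Your proposal is correct and follows essentially the same route as the paper: both verify the hypotheses of \cite[Corollary 5.2]{ConstrLoja} for $E_\mu$, $G$ on $W^{2,2}_{\mathrm{per}}(0,L;\R^2)$ with $L^2$-pairing (analyticity via Lemma \ref{lem:analyticity}, Fredholmness via Lemma \ref{lem:nablaE'Fredholm} plus compactness of lower-order terms, and transversality via linear independence of $\sin\bar\theta,\cos\bar\theta$ from the closedness constraint), then transfer the inequality back through the shift by $\phi$. The only cosmetic difference is that you phrase the Fredholm hypothesis for the full Lagrangian Hessian and reduce it to $(\nabla E_\mu)'$ by a compact-perturbation argument, while the paper applies the lemma to $(\nabla E_\mu)'$ directly; the paper also cites \cite[Proposition~3.3]{ConstrLoja} to identify the projected gradient with the Lagrange-multiplier form, a step you reach by the same bookkeeping.
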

\begin{proof}
We verify that assumptions (i)--(vi) in \cite[Corollary 5.2]{ConstrLoja} are satisfied for the energy functional ${E}_\mu \colon W^{2,2}_{\mathrm{per}}(0,L;\R^2)\to\R$ subject to the constraint $G=0$ at the constrained critical point $(\bar{u}, \bar{\rho}):= (\bar{\theta}-\phi, \bar\rho)$. 
Assumption (i) is clearly satisfied. Assumptions (ii) and (iv), and also analyticity of ${E}_\mu$ and $G$ follow from Lemma \ref{lem:analyticity}, whereas (iii) is precisely the statement of Lemma \ref{lem:nablaE'Fredholm}. Assumption (v) is satisfied by \eqref{eq:nabla E'} and using the Rellich--Kondrachov theorem.  
For (vi), it only remains to show that $\nabla G^1(\bar{u}, \bar{\rho}), \nabla G^2(\bar{u}, \bar{\rho}),\nabla G^3(\bar{u}, \bar{\rho})$ are linearly independent, which can be verified using $\int_0^L\cos\bar\theta\intd s = \int_0^L\sin\bar\theta\intd s=0$.

By \cite[Corollary 5.2]{ConstrLoja}, ${E}_\mu\vert_{\{G=0\}}$ satisfies the constrained {\L}ojasiewicz--Simon gradient inequality near $(\bar{u},\bar\rho)$, which immediately transfers to $\sE_\mu\vert_{\{\sG=0\}}$ near $(\bar\theta, \bar\rho)$ as in the statement. The explicit form of the projected gradient in terms of the Lagrange multipliers follows from \cite[Proposition~3.3]{ConstrLoja}.
\end{proof}

\subsection{Convergence}
\label{sec:convergence}

Equipped with the {\L}ojasiewicz--Simon gradient inequality as a powerful functional analytic tool, we can now prove Theorem \ref{thm:convergence}.

\begin{proof}[{Proof of Theorem \ref{thm:convergence}}]
Since we are only interested in the long-time behavior, we may without loss of generality assume $\theta, \rho\in C^\infty([0,\infty)\times [0,L])$.

\textit{Step 1: Subconvergence.} We first prove that for any sequence $t_n\to\infty$, there exist a subsequence $(t'_{n})_{n\in \NN}$ and a stationary solution $(\theta_\infty, \rho_\infty)\in C^{2+\tilde\alpha}([0,L])$ such that $\lim_{n\to\infty}(\theta(t'_n), \rho(t'_n))= (\theta_\infty, \rho_\infty)$ in $C^{2+\tilde\alpha}([0,L])$ for all $\tilde\alpha\in (0,\frac12)$.

Let $(t_n)_{n\in\NN}$ be any sequence with $t_n\to\infty$. First, note that the constant $C$ in \eqref{eq:boundW32} does not depend on the existence time, so \eqref{eq:boundW32} remains valid for $T_{\textup{max}}=\infty$. The compact embedding $W^{3,2}(0,L)\hookrightarrow C^{2+\tilde{\alpha}}([0,L])$, where $\tilde{\alpha}\in (0, \frac12)$, and a subsequence argument yields that there exists $(t_n')_{n\in \NN}$, a subsequence of $(t_n)_{n\in\NN}$, and $(\theta_\infty, \rho_\infty)$ such that for all $\tilde{\alpha}\in (0,\frac{1}{2})$ we have
\begin{align}\label{eq:subconvergence C2alpha}
    \lim_{n\to\infty}(\theta(t_n'), \rho(t_n')) = (\theta_\infty,\rho_\infty) \quad \text{in } C^{2+\tilde\alpha}([0,L]).
\end{align}
It remains to prove that $(\theta_\infty, \rho_\infty)$ is stationary, i.e.\ it satisfies \eqref{eq:stationary}. We consider 
\begin{align}
    \varphi(t) = \frac{1}{2}\int_0^L \left((\partial_t \theta)^2+(\partial_t\rho)^2\right)\intd s, \quad t\geq 0,
\end{align}
which is bounded by Proposition \ref{dtLinfL2}, and thus by \eqref{eq:gronwall linear} its derivative $\frac{\intd}{\intd t}\varphi$ is bounded from above. Since $\varphi$ is integrable on $[0, \infty)$ by Proposition \ref{prop:ltest1}, this is sufficient to guarantee that $\lim_{t\to\infty}\varphi(t)=0$. In particular by \eqref{eq:subconvergence C2alpha}, we have $\partial_t \theta(t_n',\cdot)\to 0$, $ \partial_t \rho(t_n',\cdot)\to 0$ in $C^{\tilde{\alpha}}([0,L])$ for all $\tilde{\alpha}\in (0,\frac12)$, so $(\theta_\infty, \rho_\infty)$ is stationary, thus a solution of \eqref{eq:stationary}.

\textit{Step 2: Full convergence.} 
By Step 1, there exists a sequence $t_n\to\infty$ and a stationary solution $(\theta_\infty,\rho_\infty)$ of \eqref{eq:flow equation} such that $(\theta_\infty,\rho_\infty) = \lim_{n\to\infty} (\theta(t_n),\rho(t_n))$ in $C^{2+\tilde{\alpha}}([0,L])$ for all $\tilde{\alpha}\in (0,\frac{1}{2})$.  

By \eqref{eq:energy decay}, we have $\sE_\mu(\theta(t),\rho(t))\searrow \sE_\mu(\theta_\infty, \rho_\infty)$ as $t\to\infty$. In fact, we may assume $\sE_\mu(\theta(t),\rho(t))> \sE_\mu(\theta_\infty, \rho_\infty)$ for all $t\in [0,\infty)$, since otherwise the solution must be eventually constant, hence converges trivially.

The stationary solution $(\theta_\infty, \rho_\infty)$ is a constrained critical point as observed directly after \eqref{eq:abstract CGF}. Thus, we may apply Theorem \ref{thm:loja} to $(\theta_\infty, \rho_\infty)$. Let $C, r>0$ and $\vartheta\in (0,\frac{1}{2}]$ be as in Theorem \ref{thm:loja}. After passing to a subsequence, we may assume $\Vert (\theta(t_n), \rho(t_n))-(\theta_\infty, \rho_\infty)\Vert_{W^{2,2}(0,L)}< r$ for all $n\in \NN$. We define
\begin{align}\label{eq:conv proof 1}
    \qquad\tau_n := \sup\{\tau\geq t_n\mid \Vert (\theta(t), \rho(t))-(\theta_\infty, \rho_\infty)\Vert_{W^{2,2}(0,L)}< r \text{ for all }t\in [t_n,\tau]\}.
\end{align}
Since the solution is smooth, we find that $\tau_n>t_n$ for all $n\in \NN$. Define $\mathcal{H}(t):= \left(\sE_\mu(\theta(t), \rho(t))-\sE_\mu(\theta_\infty, \rho_\infty)\right)^\vartheta$. By a direct computation and using Theorem \ref{thm:loja} we have
\begin{align}
    -\frac{\intd}{\intd t}\mathcal{H} &= \vartheta\Big(\sE_\mu(\theta, \rho)-\sE_\mu(\theta_\infty, \rho_\infty)\Big)^{\vartheta-1}\Big(-\frac{\intd}{\intd t}\sE_\mu(\theta, \rho)\Big)\\
    &= \vartheta\Big(\sE_\mu(\theta, \rho)-\sE_\mu(\theta_\infty, \rho_\infty)\Big)^{\vartheta-1}\Vert\nabla \sE_\mu(\theta,\rho)\Vert_{L^2(0,L)}\Vert\partial_t(\theta,\rho)\Vert_{L^2(0,L)}\\
    &\geq \frac{\vartheta}{C} \Vert\partial_t(\theta,\rho)\Vert_{L^2(0,L)}\label{eq:conv proof 3}
\end{align}
on $[t_n, \tau_n)$.
After integration, for $t'\in [t_n, \tau_n)$ we obtain
\begin{align}\label{eq:conv proof 4}
    \qquad\Vert (\theta(t'), \rho(t'))-(\theta(t_n),\rho(t_n))\Vert_{L^2(0,L)} \leq \int_{t_n}^{t'}\Vert\partial_t(\theta,\rho)\Vert_{L^2}\intd t \leq \frac{C}{\vartheta} \mathcal{H}(t_n) \to 0,
\end{align}
as $n\to\infty$.
We now assume that all of the $\tau_n$ are finite. By continuity, \eqref{eq:conv proof 4} also remains valid for $t'=\tau_n$. By Step 1, after passing to a subsequence, we may assume that $(\theta(\tau_n), \rho(\tau_n))\to (\hat{\theta}, \hat{\rho})$ in $C^{2+\tilde{\alpha}}([0,L])$ as $n \to\infty$ for all $\tilde\alpha\in (0,\frac{1}{2})$. By continuity and \eqref{eq:conv proof 1}, we find that 
\begin{align}\label{eq:W22r}
    \Vert (\hat{\theta}, \hat{\rho})-(\theta_\infty, \rho_\infty)\Vert_{W^{2,2}(0,L)} = r>0,
\end{align}
whereas on the other hand by \eqref{eq:conv proof 4} with $t'=\tau_n$, we have $\Vert (\hat{\theta}, \hat{\rho})-(\theta_\infty, \rho_\infty)\Vert_{L^2(0,L)} = \lim_{n\to\infty} \Vert (\theta(\tau_n), \rho(\tau_n))-(\theta(t_n), \rho(t_n))\Vert_{L^2(0,L)} =0$, a contradiction to \eqref{eq:W22r}.

Consequently, there exist some $n_0$ with $\tau_{n_0}=\infty$, and thus \eqref{eq:conv proof 3} is satisfied on $[t_{n_0}, \infty)$, which implies that $t\mapsto \Vert \partial_t(\theta(t), \rho(t))\Vert_{L^2(0,L)} \in L^1(0,\infty)$. Therefore, $(\theta(t), \rho(t))$ is Cauchy in $L^2(0,L)$ as $t\to\infty$. Hence the limit $\lim_{t\to\infty}(\theta(t), \rho(t))$ exists in $L^2(0,L)$ and thus necessarily equals $(\theta_\infty, \rho_\infty)$. From Step 1 and a subsequence argument, it follows that we have $\lim_{t\to\infty}(\theta(t), \rho(t))=(\theta_\infty, \rho_\infty)$ in $C^{2+\tilde{\alpha}}([0,L])$ for all $\tilde{\alpha}\in (0,\frac{1}{2})$.
\end{proof}

\begin{rem}
\begin{enumerate}[(i)]
    \item In Step 1 above, we have not used that $\beta$ is analytic. Thus, subconvergence also holds if merely $\beta\in C^\infty(\R)$.
    \item Together with stronger global bounds on the solution as in Remark \ref{rem:global_Wm2_bound}, it can be shown that the convergence in Theorem \ref{thm:convergence} is smooth. The same applies to the subconvergence in Step 1 of its proof.
\end{enumerate}
\end{rem}

\appendix
\section{Function Spaces}
\label{appfuncspace}

Already in the introduction in Theorem \ref{thm:Main STE} little Hölder functions appear. Later in Chapter \ref{ste} we work with time-weighted little Hölder spaces and in the proof of Lemma \ref{smoothing} additionally the parabolic Hölder spaces are used. 
Here we collect the definitions of these spaces and list some properties.
\subsection{Little Hölder functions}
For $\alpha\in\RR_+\setminus\ZZ$, we denote by $\lfloor\alpha\rfloor$ the largest integer less than $\alpha$ and $\{\alpha\}:=\alpha-\lfloor\alpha\rfloor \in (0,1)$. Let $k\in\NN$.
The space of $L$-periodic $\RR^k$-valued functions over $\RR$ which are continuous and $\lfloor\alpha\rfloor$-times continuously differentiable is denoted by $C^{\lfloor\alpha\rfloor}_L(\RR;\RR^k)$. The \textit{periodic Hölder space with exponent $\alpha$} is defined as
\begin{align*}
    C^\alpha_L(\RR;\RR^k)
    &:=\Big\lbrace f\!\in\! C^{\lfloor\alpha\rfloor}_L(\RR;\RR^k): \left[f^{\left(\lfloor\alpha\rfloor\right)}\right]_{\alpha,\RR}\hspace{-.5em}:=\sup_{\substack{x,y\in\RR, \\ x\neq y}}\frac{\left\lvert f^{\left(\lfloor\alpha\rfloor\right)}(x)-f^{\left(\lfloor\alpha\rfloor\right)}(y)\right\rvert}{|x-y|^{\{\alpha\}}}<\infty\Big\rbrace
\end{align*}
and equipped with the norm
$ \left\Vert f \right\Vert_{C^\alpha(\RR)}:=\left\Vert f \right\Vert_{C^{\lfloor \alpha \rfloor}(\RR)}+\left[f^{\left(\lfloor\alpha\rfloor\right)}\right]_{\alpha,\RR}$. Therewith, we define the \textit{periodic little Hölder space with exponent $\alpha$} over $\RR$ by
\begin{align}
\label{eq:deflitH}
    \qquad h^\alpha_L(\RR;\RR^k):=\Big\lbrace f\in C^\alpha_L(\RR;\RR^k):\,\lim_{\delta\to0}\hspace{-.5em}\sup_{\substack{x,y\in\RR,\\0<|x-y|<\delta}}\hspace{-.5em}\frac{\left\lvert f^{\left(\lfloor \alpha \rfloor\right)}(x)-f^{\left(\lfloor \alpha \rfloor\right)}(y)\right\rvert}{|x-y|^{\{\alpha\}}}=0\Big\rbrace.
\end{align}
This is a closed subspace of $C^\alpha_L(\RR;\RR^k)$ and therefore a Banach space.

In the following we collect some properties of periodic little Hölder functions. This type of functions plays a fundamental role in the classical maximal regularity theory, see for example \cite{L1995}.

\begin{lem}
\label{lem:Höldemb}\textup{\cite[Prop. 1.2]{lC2011}}
Let $\alpha_1,\alpha_2\in \RR_+\setminus\ZZ$ such that $\alpha_1<\alpha_2$. Then $h^{\alpha_1}_L(\RR)$ is the closure of $C^{\alpha_2}_L(\RR)$ in $\big(C^{\alpha_1}_L(\RR),\left\Vert\cdot\right\Vert_{C^{\alpha_1}}\big)$ and $h^{\alpha_2}_L(\RR)\hookrightarrow h^{\alpha_1}_L(\RR)$ densely.
\end{lem}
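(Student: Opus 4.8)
The plan is to reduce everything to two facts: that smooth periodic functions belong to $h^{\alpha_1}_L(\RR)$, and that they are dense in $h^{\alpha_1}_L(\RR)$ for the norm $\Vert\cdot\Vert_{C^{\alpha_1}}$. Write $m_i:=\lfloor\alpha_i\rfloor$ and $\sigma_i:=\{\alpha_i\}$, so $m_1\le m_2$ and $\alpha_i=m_i+\sigma_i$. Since all functions are $L$-periodic, replacing $y$ by the representative of $y+L\ZZ$ nearest to $x$ shows that every Hölder quotient of a derivative of such a function, taken over $\RR$, is already realized for $|x-y|\le L$; I will use this reduction repeatedly without further comment.

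\textbf{Step 1: $C^{\alpha_2}_L(\RR)\subset h^{\alpha_1}_L(\RR)$ with continuous inclusion.} Let $f\in C^{\alpha_2}_L(\RR)$. If $m_1=m_2$ and $\sigma_1<\sigma_2$, then for $0<|x-y|\le L$,
\[
\frac{|f^{(m_1)}(x)-f^{(m_1)}(y)|}{|x-y|^{\sigma_1}}\le [f^{(m_1)}]_{\alpha_2,\RR}\,|x-y|^{\sigma_2-\sigma_1},
\]
which is bounded by $[f^{(m_1)}]_{\alpha_2,\RR}L^{\sigma_2-\sigma_1}$ and tends to $0$ as $|x-y|\to0$; if $m_1<m_2$, then $f^{(m_1)}\in C^1_L(\RR)$ and the same quotient is at most $\Vert f^{(m_1+1)}\Vert_\infty|x-y|^{1-\sigma_1}\to0$. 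In either case $f\in h^{\alpha_1}_L(\RR)$ and $\Vert f\Vert_{C^{\alpha_1}}\le C(\alpha_1,\alpha_2,L)\Vert f\Vert_{C^{\alpha_2}}$. As $h^{\alpha_1}_L(\RR)$ is closed in $C^{\alpha_1}_L(\RR)$, the closure of $C^{\alpha_2}_L(\RR)$ in $C^{\alpha_1}_L(\RR)$ is contained in $h^{\alpha_1}_L(\RR)$.

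\textbf{Step 2: mollification.} Given $f\in h^{\alpha_1}_L(\RR)$, set $g:=f^{(m_1)}$ and $\omega(\delta):=\sup_{0<|x-y|<\delta}|g(x)-g(y)|/|x-y|^{\sigma_1}$; by the definition of $h^{\alpha_1}_L(\RR)$, $\omega$ is bounded and $\omega(\delta)\to0$ as $\delta\to0$. Fix $\varphi\in C^\infty_c(\RR)$ with $\varphi\ge0$ and $\int_\RR\varphi=1$, put $\varphi_\varepsilon(x):=\varepsilon^{-1}\varphi(x/\varepsilon)$, and let $f_\varepsilon:=f*\varphi_\varepsilon$, a smooth $L$-periodic function. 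Since $f\in C^{m_1}_L(\RR)$, one has $f_\varepsilon^{(j)}=f^{(j)}*\varphi_\varepsilon\to f^{(j)}$ uniformly for $j=0,\dots,m_1$. For the top seminorm, with $g_\varepsilon:=f_\varepsilon^{(m_1)}=g*\varphi_\varepsilon$, the triangle inequality together with $\int_\RR\varphi_\varepsilon=1$ gives $|g_\varepsilon(x)-g_\varepsilon(y)|\le\omega(|x-y|)\,|x-y|^{\sigma_1}$, and likewise $|g(x)-g(y)|\le\omega(|x-y|)\,|x-y|^{\sigma_1}$. Hence for $0<|x-y|<\delta_0$ the Hölder quotient of $g_\varepsilon-g$ is at most $2\omega(\delta_0)$, while for $|x-y|\ge\delta_0$ it is at most $2\Vert g_\varepsilon-g\Vert_\infty\,\delta_0^{-\sigma_1}$. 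Given $\eta>0$, I first choose $\delta_0$ with $2\omega(\delta_0)<\eta$, then $\varepsilon$ small enough that $2\Vert g_\varepsilon-g\Vert_\infty\,\delta_0^{-\sigma_1}<\eta$; thus $[g_\varepsilon-g]_{\sigma_1,\RR}\to0$, and together with the uniform convergence of the lower derivatives, $\Vert f_\varepsilon-f\Vert_{C^{\alpha_1}}\to0$.

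Combining the two steps finishes the proof: Step 2 shows $f$ lies in the closure of $C^\infty_L(\RR)\subset C^{\alpha_2}_L(\RR)$ inside $C^{\alpha_1}_L(\RR)$, which with Step 1 yields the first assertion. For the second, the continuous inclusion $h^{\alpha_2}_L(\RR)\hookrightarrow h^{\alpha_1}_L(\RR)$ is a special case of Step 1 (since $h^{\alpha_2}_L\subset C^{\alpha_2}_L$), and density holds because the approximants $f_\varepsilon$ of Step 2 already belong to $h^{\alpha_2}_L(\RR)$. I expect the only real obstacle to be the convergence $[g_\varepsilon-g]_{\sigma_1,\RR}\to0$: mollification does \emph{not} converge in $C^{\sigma_1}$-norm for a generic $C^{\sigma_1}$-function, so this step must genuinely exploit the vanishing-modulus property through the split into $\{|x-y|<\delta_0\}$ and $\{|x-y|\ge\delta_0\}$.
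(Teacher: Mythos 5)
Your proof is correct, and it fills a gap the paper leaves open: the paper simply cites \cite[Prop.~1.2]{lC2011} without giving an argument, while you supply a self-contained one. The two steps are the right ones. Step~1 (continuous embedding of $C^{\alpha_2}_L$ into $h^{\alpha_1}_L$ via the extra regularity margin, whether fractional or integer) combined with the closedness of $h^{\alpha_1}_L$ gives one inclusion of the closure; Step~2 (periodic mollification with the split $\{|x-y|<\delta_0\}$ versus $\{|x-y|\ge\delta_0\}$) gives the other, and the same approximants establish the density statement for $h^{\alpha_2}_L\hookrightarrow h^{\alpha_1}_L$ since $f_\varepsilon\in C^\infty_L\subset h^{\alpha_2}_L$. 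You are also right to flag the key point: plain mollification does \emph{not} converge in the $C^{\alpha_1}$-norm for a generic $C^{\alpha_1}$-function, and the argument genuinely needs the vanishing modulus $\omega(\delta)\to 0$ that defines the little Hölder class. One cosmetic remark: the inequality $|g_\varepsilon(x)-g_\varepsilon(y)|\le\omega(|x-y|)|x-y|^{\sigma_1}$ is slightly loose as written, since $\omega(\delta)$ is defined with a strict inequality $|x-y|<\delta$; but what you actually use in the subsequent step, namely that for all $0<|x-y|<\delta_0$ both $g$ and $g_\varepsilon$ have Hölder quotient at most $\omega(\delta_0)$, is exactly right, so the argument goes through unchanged.
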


\begin{lem}[Product of periodic little Hölder functions]
\label{multlith}
Let $\alpha_1,\alpha_2\in\RR_+\setminus\ZZ$. Let $g_1\in h^{\alpha_1}_L(\RR)$ and let $g_2\in h^{\alpha_2}_L(\RR)$. Then
\begin{align*}
    g_1 g_2\in h^{\min\{\alpha_1,\alpha_2\}}_{per, L}(\RR)
\;\text{ and }\;
    \left\Vert g_1 g_2\right\Vert_{C^{\min\{\alpha_1,\alpha_2\}}}\leq C(\alpha_1,\alpha_2,L)\left\Vert g_1\right\Vert_{C^{\alpha_1}}\left\Vert g_2\right\Vert_{C^{\alpha_2}}.
\end{align*}
\end{lem}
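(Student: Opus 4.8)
The plan is to first prove that each periodic Hölder space $C^\beta_L(\RR)$, $\beta\in\RR_+\setminus\ZZ$, is a Banach algebra up to a constant depending only on $\beta$ and $L$, and then to deduce the ``little'' part of the statement by a density argument built on Lemma \ref{lem:Höldemb}. Throughout, assume without loss of generality that $\alpha_1\le\alpha_2$, so that $\min\{\alpha_1,\alpha_2\}=\alpha_1=:\alpha$ and $\lfloor\alpha\rfloor\le\lfloor\alpha_2\rfloor$.

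The first step is the algebra estimate: for every $\beta\in\RR_+\setminus\ZZ$ there is $C=C(\beta,L)>0$ with $\|fg\|_{C^\beta(\RR)}\le C\|f\|_{C^\beta(\RR)}\|g\|_{C^\beta(\RR)}$ for all $f,g\in C^\beta_L(\RR)$. For $\beta\in(0,1)$ this follows at once from
\[
|f(x)g(x)-f(y)g(y)|\le\|f\|_\infty\,|g(x)-g(y)|+\|g\|_\infty\,|f(x)-f(y)|
\]
together with the boundedness of periodic continuous functions. For $\beta>1$ I would apply the Leibniz rule to $\partial_s^{\lfloor\beta\rfloor}(fg)$, obtaining a finite sum of terms $f^{(i)}g^{(\lfloor\beta\rfloor-i)}$; since $f^{(i)}\in C^{\beta-i}_L(\RR)$ and $g^{(\lfloor\beta\rfloor-i)}\in C^{\beta-\lfloor\beta\rfloor+i}_L(\RR)$, each factor is bounded and at least $\{\beta\}$-Hölder --- being Lipschitz whenever the order of differentiation is strictly below $\lfloor\beta\rfloor$, which one converts to a $\{\beta\}$-Hölder bound on all of $\RR$ by distinguishing the regimes $|x-y|\le L$ and $|x-y|>L$. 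Estimating the $\{\beta\}$-seminorm of each such product as in the case $\beta\in(0,1)$ and bounding all lower order derivatives by $\|\cdot\|_{C^\beta}$ then yields the claim after summing the finitely many terms.

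Combining the algebra estimate (with $\beta=\alpha$) with the continuous embedding $h^{\alpha_2}_L(\RR)\hookrightarrow h^{\alpha}_L(\RR)$ from Lemma \ref{lem:Höldemb}, which gives $\|g_2\|_{C^\alpha(\RR)}\le C(\alpha_1,\alpha_2,L)\|g_2\|_{C^{\alpha_2}(\RR)}$, one obtains $g_1g_2\in C^\alpha_L(\RR)$ together with the asserted bound $\|g_1g_2\|_{C^\alpha(\RR)}\le C(\alpha_1,\alpha_2,L)\|g_1\|_{C^{\alpha_1}(\RR)}\|g_2\|_{C^{\alpha_2}(\RR)}$. To upgrade this to membership in the little Hölder space, fix any $\alpha'\in\RR_+\setminus\ZZ$ with $\alpha'>\alpha_2$. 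By Lemma \ref{lem:Höldemb} applied to the pair $\alpha<\alpha'$, the space $h^\alpha_L(\RR)$ is the $C^\alpha$-closure of $C^{\alpha'}_L(\RR)$, and in particular $C^{\alpha'}_L(\RR)\subset h^\alpha_L(\RR)$. Since $g_1,g_2\in h^{\alpha_2}_L(\RR)\subset h^{\alpha}_L(\RR)$, there are sequences $(g_1^n)_{n\in\NN},(g_2^n)_{n\in\NN}\subset C^{\alpha'}_L(\RR)$ with $g_1^n\to g_1$ and $g_2^n\to g_2$ in $C^\alpha(\RR)$. The algebra estimate with $\beta=\alpha'$ gives $g_1^n g_2^n\in C^{\alpha'}_L(\RR)\subset h^\alpha_L(\RR)$, and the algebra estimate with $\beta=\alpha$ together with the telescoping identity $g_1^ng_2^n-g_1g_2=(g_1^n-g_1)g_2^n+g_1(g_2^n-g_2)$ gives $\|g_1^ng_2^n-g_1g_2\|_{C^\alpha(\RR)}\to0$ (using that $\|g_2^n\|_{C^\alpha(\RR)}$ stays bounded). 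Since $h^\alpha_L(\RR)$ is a closed subspace of $C^\alpha_L(\RR)$, the limit $g_1g_2$ lies in $h^\alpha_L(\RR)$, which completes the proof.

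The only genuine work is the higher order Leibniz bookkeeping in the first step: it is routine but has to be carried out carefully to keep track of which factors are truly only $\{\beta\}$-Hölder rather than Lipschitz, and to ensure the constant depends only on $\beta$ and the period $L$ --- the $L$-dependence entering precisely through the passage from a Lipschitz bound to a $\{\beta\}$-Hölder bound on all of $\RR$. The remaining steps are soft, relying only on the closedness of $h^\alpha_L(\RR)$ in $C^\alpha_L(\RR)$ and on Lemma \ref{lem:Höldemb}.
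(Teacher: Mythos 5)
Your proof is correct, but it routes through a different mechanism than the paper's. The paper's (terse) proof is to verify the vanishing-oscillation condition in \eqref{eq:deflitH} directly for the product: write the $\{\alpha\}$-Hölder quotient of $\partial_s^{\lfloor\alpha\rfloor}(g_1 g_2)$ via the Leibniz rule and check term by term that it tends to zero with $|x-y|$, using that in each term at least one factor has vanishing oscillation. You instead decouple the two halves of the claim: you first prove the norm estimate via the Banach-algebra property of the \emph{ordinary} periodic Hölder spaces $C^\beta_L(\RR)$, and then obtain the little-Hölder membership ``softly'' by approximating $g_1,g_2$ in the $C^\alpha$-norm by functions in $C^{\alpha'}_L(\RR)$ for some $\alpha'>\alpha_2$, using that $C^{\alpha'}_L(\RR)\subset h^\alpha_L(\RR)$ and that $h^\alpha_L(\RR)$ is closed in $C^\alpha_L(\RR)$. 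What the density route buys is that the only genuine computation is the algebra estimate for $C^\beta_L$, which is standard and avoids any oscillation bookkeeping; the cost is that you must prove the algebra estimate not only at $\beta=\alpha$ but also at an auxiliary exponent $\beta=\alpha'>\alpha_2$, and you must note (as you do) that periodicity is what converts Lipschitz bounds into global $\{\beta\}$-Hölder bounds on all of $\RR$, which is precisely where the $L$-dependence of the constant enters. One small slip in the write-up: you justify the existence of both approximating sequences by ``$g_1,g_2\in h^{\alpha_2}_L(\RR)$,'' but $g_1$ is only assumed to lie in $h^{\alpha_1}_L(\RR)$; since $\alpha=\alpha_1$ this is exactly $h^\alpha_L(\RR)$, so the density you need is still available, just not for the stated reason.
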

\begin{proof}
This follows from \eqref{eq:deflitH}  and a lengthy computation.
\end{proof}

\subsection{Time-weighted BUC spaces}
\label{sec:BUC-spaces}

Let $E$ be an arbitrary Banach space. Let $\eta\in(0,1)$ and let $T>0$. 
We define 
\begin{align*}
    \textup{BUC}_{1-\eta}\left([0,T];E\right):=\Big\lbrace & \xi\in C\left((0,T];E\right):\left[t\mapsto t^{1-\eta}{\xi(t)}\right] \textup{  bounded and uniformly}\\ & \quad \qquad\textup{ continuous on }(0,T] \textup{ with }\lim_{t\to 0^+}{t^{1-\eta}}\left\Vert\xi(t)\right\Vert_{E}=0\Big\rbrace.
\end{align*}
Moreover, we define the subspace
\begin{align*}
    \textup{BUC}_{1-\eta}^1\left([0,T];E\right):=\left\lbrace \xi\in C^1\left((0,T];E\right):\,\xi,\partial_t\xi\in \textup{BUC}_{1-\eta}\left([0,T];E\right)\right\rbrace.
\end{align*}
In Section \ref{ste}, we use this space with $E$ a periodic little Hölder space. In particular, the spaces $\EE_0$ and $\EE_1$, defined in \eqref{eq:defEE0} and \eqref{eq:defEE1} are Banach spaces with the norms
\begin{align*}
    \left\Vert\xi\right\Vert_{\EE_0}&= \left\Vert\xi\right\Vert_{\EE_0([0,T])}:=\sup_{t\in(0,T]}t^{1-\eta}\left\Vert\xi(t)\right\Vert_{C^\alpha(\RR)}\quad\text{ and }\\
    \left\Vert\xi\right\Vert_{\EE_1}&=\left\Vert\xi\right\Vert_{\EE_1([0,T])}:=\sup_{t\in(0,T]}t^{1-\eta}\left(\left\Vert\partial_t\xi(t)\right\Vert_{C^\alpha(\RR)}+\left\Vert\xi(t)\right\Vert_{C^{2+\alpha}(\RR)}\right),
\end{align*}
respectively. We use that the mapping 
\begin{align*}
    \gamma\colon\EE_1([0,T])\to h^\alpha_L(\RR;\RR^k),\quad \xi(t,\cdot)\mapsto\xi(0,\cdot)
\end{align*}
is well defined, linear and continuous. This follows from the observation that for $\xi\in\EE_1([0,T])$ and $0<s<t$
\begin{equation}\label{eq:martin1}
\xi(t)-\xi(s)=\int_s^t \partial_t \xi (\tau) \intd\tau, 
\end{equation}
and hence there exists $ \lim_{t \to 0}\xi(t)$ in $h^\alpha_L(\RR)$, see \cite[Remark 2.1]{CS2001}.
We introduce the trace space $ \gamma\EE_1:=\textup{Im}(\gamma)$. In particular, $\gamma\EE_1$ does not depend on $T>0$.
In \cite[Section 5.1]{lC2011} it is shown that up to equivalent norms
\begin{align}\label{eq:tracespace}
    \gamma\EE_1=h^{2\eta+\alpha}_L(\RR;\RR^k)
\end{align}
provided $2\eta+\alpha\not\in\ZZ$.
For simplicity, we will mostly omit the index $k$.

If $\eta>\frac12$, we have continuity in space and time of the first order $s$-derivative by the following lemma. This is essential in Section \ref{ste}.

\begin{lem}
\label{appbuc1}\textup{\cite[Lemma 2.2]{CS2001}}
Let $T>0$. We have
\begin{align*}
    \EE_1([0,T])\hookrightarrow\textup{BUC}\big([0,T];h^{2\eta+\alpha}_L(\RR)\big).
\end{align*}
In particular, this implies that for $\xi\in\EE_1([0,T])$ there exists $\lim_{t \to 0}\xi(t)$ in $C^{2\eta+\alpha}_L(\RR)$ 
and $\sup_{t\in[0,T]}\left\Vert \xi\right\Vert_{C^{2 \eta+\alpha}} \leq C(T) \left\Vert\xi\right\Vert_{\EE_1}< \infty$.
\end{lem}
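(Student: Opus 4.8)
The plan is to view $\EE_1$ as a time-weighted maximal regularity space for the one-dimensional periodic Laplacian and to reduce the statement to two classical facts about the analytic semigroup it generates, glued together by a Beta-function identity that makes the time weights cancel.

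First I would produce the trace. Since $\xi\in\EE_1([0,T])$ we have $\|\partial_t\xi(\sigma)\|_{C^\alpha}\le\sigma^{\eta-1}\|\xi\|_{\EE_1}$, which is integrable on $(0,T]$ because $\eta>0$; hence $\xi\in W^{1,1}((0,T);h^\alpha_L)\hookrightarrow C([0,T];h^\alpha_L)$ and $\xi(0):=\lim_{t\to0}\xi(t)$ exists in $h^\alpha_L$, cf.\ \eqref{eq:martin1}. By the trace space identification $\gamma\EE_1=h^{2\eta+\alpha}_L$ (cf.\ \eqref{eq:tracespace}), which also identifies $h^{2\eta+\alpha}_L$, up to equivalent norms, with the continuous interpolation space $(h^\alpha_L,h^{2+\alpha}_L)_\eta$, we moreover obtain $\xi(0)\in h^{2\eta+\alpha}_L$ with $\|\xi(0)\|_{C^{2\eta+\alpha}}\le C\|\xi\|_{\EE_1}$.

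Next I would set up a mild representation. Let $A$ be the realization of $\partial_s^2$ in $h^\alpha_L(\RR)$, which has domain $h^{2+\alpha}_L(\RR)$ and generates the analytic, strongly continuous periodic heat semigroup $(P_t)_{t\ge0}$ (part of the maximal regularity framework of \cite{lC2011}). Put $g(\sigma):=\partial_t\xi(\sigma)-\partial_s^2\xi(\sigma)\in h^\alpha_L$; from the definition of the $\EE_1$-norm, $\|g(\sigma)\|_{C^\alpha}\le\sigma^{\eta-1}\psi(\sigma)$ where $\psi(\sigma):=\sigma^{1-\eta}\|g(\sigma)\|_{C^\alpha}$ satisfies $\psi\le\|\xi\|_{\EE_1}$ on $(0,T]$ and $\psi(\sigma)\to0$ as $\sigma\to0^+$. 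Differentiating $\sigma\mapsto P_{t-\sigma}\xi(\sigma)$ on $(0,t)$, integrating from $\varepsilon$ to $t$ and letting $\varepsilon\to0$ (using $\xi(\varepsilon)\to\xi(0)$ in $h^\alpha_L$ and strong continuity of $P$), one obtains
\begin{align*}
\xi(t)=P_t\xi(0)+\int_0^tP_{t-\sigma}g(\sigma)\intd\sigma,\qquad t\in(0,T].
\end{align*}
For the first term: on the continuous interpolation space $h^{2\eta+\alpha}_L=(h^\alpha_L,h^{2+\alpha}_L)_\eta$ the semigroup $(P_t)$ restricts to a strongly continuous semigroup with $\|P_t\|_{\mathcal{L}(h^{2\eta+\alpha}_L)}\le C(T)$ for $t\in[0,T]$, so $t\mapsto P_t\xi(0)$ lies in $C([0,T];h^{2\eta+\alpha}_L)$, has sup-norm $\le C(T)\|\xi\|_{\EE_1}$, and tends to $\xi(0)$ as $t\to0$. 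For the convolution I would use the smoothing bound $\|P_\tau v\|_{C^{2\eta+\alpha}}\le C(T)\tau^{-\eta}\|v\|_{C^\alpha}$ (obtained by interpolating $\|P_\tau v\|_{C^{2+\alpha}}\le C\tau^{-1}\|v\|_{C^\alpha}$ against $\|P_\tau v\|_{C^\alpha}\le C\|v\|_{C^\alpha}$), which gives
\begin{align*}
\Big\|\int_0^tP_{t-\sigma}g(\sigma)\intd\sigma\Big\|_{C^{2\eta+\alpha}}
&\le C(T)\int_0^t(t-\sigma)^{-\eta}\sigma^{\eta-1}\psi(\sigma)\intd\sigma\\
&\le C(T)\,\mathrm{B}(\eta,1-\eta)\,\sup_{[0,t]}\psi\;\le\;C(T)\|\xi\|_{\EE_1},
\end{align*}
since $\int_0^t(t-\sigma)^{-\eta}\sigma^{\eta-1}\intd\sigma=\mathrm{B}(\eta,1-\eta)$ is finite and independent of $t$; because $\psi(\sigma)\to0$, the same estimate shows the convolution term tends to $0$ in $h^{2\eta+\alpha}_L$ as $t\to0$, and its continuity on $(0,T]$ follows by dominated convergence and strong continuity of $P$. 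Summing the two contributions yields $\xi\in C([0,T];h^{2\eta+\alpha}_L)$ with $\xi(0)=\lim_{t\to0}\xi(t)$ and $\sup_{[0,T]}\|\xi(t)\|_{C^{2\eta+\alpha}}\le C(T)\|\xi\|_{\EE_1}$; uniform continuity is automatic on the compact interval $[0,T]$, which is the asserted embedding.

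I expect the main obstacle to be not a single estimate but the bookkeeping of the abstract framework: confirming in the periodic little Hölder scale that $A=\partial_s^2$ has domain $h^{2+\alpha}_L$, that $(h^\alpha_L,h^{2+\alpha}_L)_\eta=h^{2\eta+\alpha}_L$, and that $(P_t)$ acts on this continuous interpolation space with the displayed smoothing bound and strong continuity. Once this is in place, the Beta-function identity is exactly what cancels the $\sigma^{\eta-1}$ weight built into $\EE_1$ against the $(t-\sigma)^{-\eta}$ coming from the heat kernel, which is precisely why the embedding can land in an \emph{unweighted} $\textup{BUC}$-space. As an alternative one could bypass the semigroup language and argue via the $K$-functional characterization of $h^{2\eta+\alpha}_L$ together with a dyadic splitting of $\xi(t)$ into a local time-average and a remainder; the estimates there are essentially equivalent but somewhat longer, so I would prefer the semigroup route.
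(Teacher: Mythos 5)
The paper does not prove this lemma; it simply cites \cite[Lemma 2.2]{CS2001}, so there is no in-paper argument to compare against. Your self-contained proof is correct and is precisely the canonical argument behind that reference: identify the initial trace via absolute continuity of $t\mapsto\xi(t)$ in $h^\alpha_L$, write $\xi(t)=P_t\xi(0)+\int_0^tP_{t-\sigma}g(\sigma)\intd\sigma$ with $g=\partial_t\xi-\partial_s^2\xi$, control the free part by strong continuity of the periodic heat semigroup on the continuous interpolation space $h^{2\eta+\alpha}_L=(h^\alpha_L,h^{2+\alpha}_L)_\eta$, and control the Duhamel term by the smoothing bound $\|P_\tau\|_{\mathcal{L}(h^\alpha_L,h^{2\eta+\alpha}_L)}\lesssim\tau^{-\eta}$ together with the Beta-function identity, which indeed makes the $\sigma^{\eta-1}$ weight from $\EE_1$ cancel the $(t-\sigma)^{-\eta}$ singularity. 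The observation that $\psi(\sigma):=\sigma^{1-\eta}\|g(\sigma)\|_{C^\alpha}\to0$, coming from the $\lim_{t\to0^+}$ clause in the definition of $\textup{BUC}_{1-\eta}$, is exactly what lets you conclude convergence of the Duhamel term to $0$ and hence the correct value of $\lim_{t\to0}\xi(t)$; this would \emph{fail} for the non-little-$o$ analogue, which is why little Hölder spaces are used here. The few things you flag as possible gaps — that $\partial_s^2$ on $h^\alpha_L$ has domain $h^{2+\alpha}_L$, generates an analytic $C_0$-semigroup, and that $(h^\alpha_L,h^{2+\alpha}_L)_\eta=h^{2\eta+\alpha}_L$ with the expected norm estimates — are standard in the periodic little Hölder scale and are exactly the facts that \cite{lC2011} packages; your proof otherwise closes all the loops (existence and identification of the trace, the $L^1$-in-time integrability needed to justify the Duhamel formula, and the dominated-convergence argument for continuity on $(0,T]$).
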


\begin{rem}
\label{rem:appbuc1}
Lemma \ref{appbuc1} implies that if $\eta\geq\frac{1}{2}$, for $\xi\in\EE_1([0,T])$, there exists $\lim_{t \to 0}\partial_s\xi(t)$ in $C^{\alpha}_L(\RR)$ and $\partial_s\xi\in\textup{BUC}\big([0,T];h^{\alpha}_L(\RR)\big)$.
\end{rem}

\begin{rem}\label{rem:appbuc2}
We observe that from the definition of the spaces $\EE_0([0,T])$ and $\EE_1([0,T])$ we can derive the following properties.
\begin{enumerate}[(i)]
    \item If $f \in \EE_1([0,T])$, then $\partial_s f,\partial_s^2 f \in \EE_0([0,T])$.
    \item If $f_1 \in \EE_1([0,T])$ and $f_2 \in \EE_0([0,T])$, then $f_1f_2 \in \EE_0([0,T])$.
    \item If $f\in\EE_1([0,T])$ and $g\in C^\infty(\RR)$, then $g\circ f\in\EE_1([0,T])$.
\end{enumerate}
\end{rem}

\begin{lem}\label{lem:ds2u in L2}
Let $T>0$, $\eta\in (\frac{1}{2},1)$, $\xi\in \EE_1([0,T])$. Then $\partial_s^2 \xi\in L^2([0,T]\times [0,L])$.
\end{lem}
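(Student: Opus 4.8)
The statement asserts that for $\xi \in \EE_1([0,T])$ with time weight $\eta \in (\tfrac12, 1)$, we have $\partial_s^2\xi \in L^2([0,T]\times[0,L])$. The point is that, by definition of $\EE_1$, we only know $\partial_s^2\xi(t) \in h^\alpha_L(\RR)$ with the quantitative bound $\|\partial_s^2\xi(t)\|_{C^\alpha(\RR)} \leq t^{\eta-1}\|\xi\|_{\EE_1}$, and the factor $t^{\eta-1}$ blows up as $t \to 0$. The plan is simply to check that this singularity is square-integrable in time.

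\emph{Step 1: Pointwise-in-time bound.} For each $t \in (0,T]$, since $\xi \in \EE_1([0,T])$ we have $\xi(t) \in h^{2+\alpha}_L(\RR) \hookrightarrow C^2([0,L])$, so $\partial_s^2\xi(t) \in C([0,L])$ and in particular $\partial_s^2\xi(t) \in L^2(0,L)$. By the definition of the $\EE_1$-norm (see Section~\ref{sec:BUC-spaces}),
\begin{align*}
    \|\partial_s^2\xi(t)\|_{L^2(0,L)} \leq L^{1/2}\|\partial_s^2\xi(t)\|_{C^0([0,L])} \leq L^{1/2}\|\xi(t)\|_{C^{2+\alpha}(\RR)} \leq L^{1/2}\,t^{\eta-1}\|\xi\|_{\EE_1([0,T])}.
\end{align*}

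\emph{Step 2: Integrate in time.} Using Step~1 and Tonelli's theorem,
\begin{align*}
    \int_0^T \int_0^L (\partial_s^2\xi)^2 \intd s \intd t = \int_0^T \|\partial_s^2\xi(t)\|_{L^2(0,L)}^2 \intd t \leq L\,\|\xi\|_{\EE_1([0,T])}^2 \int_0^T t^{2\eta-2}\intd t.
\end{align*}
Since $\eta > \tfrac12$, we have $2\eta - 2 > -1$, so $\int_0^T t^{2\eta-2}\intd t = \tfrac{T^{2\eta-1}}{2\eta-1} < \infty$. Hence $\partial_s^2\xi \in L^2([0,T]\times[0,L])$, with the explicit bound $\|\partial_s^2\xi\|_{L^2([0,T]\times[0,L])}^2 \leq \tfrac{L\,T^{2\eta-1}}{2\eta-1}\|\xi\|_{\EE_1([0,T])}^2$.

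There is no real obstacle here; the only subtlety is that one must use the precise definition of $\textup{BUC}_{1-\eta}$ to extract the rate $t^{\eta-1}$ of the singularity at $t=0$, and then observe that the hypothesis $\eta > \tfrac12$ is exactly what makes $t^{2\eta-2}$ integrable near $0$. (This is also why the condition $\eta > \tfrac12$ is flagged as essential for uniqueness in the remark following Definition~\ref{defsol}, since Lemma~\ref{lem:ds2u in L2} is invoked at the end of the proof of Lemma~\ref{lem:unique}.) One could also replace the crude bound $\|\partial_s^2\xi(t)\|_{L^2} \leq L^{1/2}\|\partial_s^2\xi(t)\|_{C^0}$ by the embedding $C^{2+\alpha} \hookrightarrow W^{2,2}(0,L)$ on the fundamental period, but the elementary estimate above suffices.
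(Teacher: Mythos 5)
Your proof is correct and is essentially the same as the paper's: extract the pointwise bound $\sup_s|\partial_s^2\xi(t,s)|\lesssim t^{\eta-1}$ from the definition of the $\EE_1$-norm, square, integrate over $[0,L]$, and observe that $t^{2\eta-2}$ is integrable on $(0,T)$ precisely because $\eta>\tfrac12$. The paper presents this in one line with a constant $K$ in place of your $\|\xi\|_{\EE_1}$; the content is identical.
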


\begin{proof}
Since the $\EE_1$-norm is finite, there exists $K>0$ such that for all $t\in [0,T]$ we have
$
\sup_{s\in \RR} |\partial_s^2\xi(t,s)| \leq K t^{\eta-1}.
$
It thus follows
\begin{align}
    \int_0^T \int_0^L (\partial_s^2\xi)^2\intd s \intd t \leq K^2 L \int_0^T t^{2\eta - 2}\intd t =  \frac{K^2L}{2\eta-1}T^{2\eta-1}<\infty. &\qedhere
\end{align}
\end{proof}

\subsection{Parabolic Hölder spaces}
\label{app:parhoelder}

At the end of Section \ref{ste} we show that the solution of \eqref{eq:flow eq u} from Proposition  \ref{prop:existence} is smooth. For this purpose we consider two additional function spaces.
First, we set for $\alpha\in\RR_+\setminus\ZZ$,
$\zeta\in[0,1)$ and $T>0$
\begin{align*}
    \textup{BUC}^{\zeta}\left([0,T];h^{\alpha}_L(\RR)\right)
    :=\Big\lbrace&\xi:[0,T]\to h^{\alpha}_L(\RR)
    \textup{ bounded and uniformly continuous}\\ &\textup{ on } [0,T],\;
    \sup_{{t_1\neq t_2\in[0,T]}}\frac{\left\Vert \xi(t_1,s)-\xi(t_2,s)\right\Vert_{C^\alpha}}{\left\vert t_1-t_2\right\vert^{\zeta}}<\infty\Big\rbrace.
\end{align*}
Moreover, we work with parabolic Hölder spaces, cf.\ \cite{LSU1988}. 
Let $l\in\RR_+\setminus\ZZ$, $a,b\in\RR$ with $a<b$, $T>0$ and let $Q_T:=(0,T)\times(a,b)$. The parabolic Hölder space $H^{\frac{l}{2},l}\left(\overline{Q_T}\right)$ is the space of all functions $f:\overline{Q_T}\to\RR$ with continuous derivatives $\partial_t^m\partial_s^nf$ for $2m+n<l$ and finite norm
\begin{align*}
    \left\Vert f\right\Vert_{H^{\frac{l}{2},l}\left(\overline{Q_T}\right)}
    :=\,&\sum_{j=0}^{\lfloor l \rfloor}\sum_{2m+n=j}\sup_{(t,s)\in{Q_T}}\left\vert\partial_t^m\partial_s^n f\right\vert\\
    &+\sum_{2m+n=\lfloor l \rfloor}\sup_{\substack{(t,s_1),(t,s_2)\in Q_T\\ s_1\neq s_2}}\frac{\left\vert \partial_t^m\partial_s^nf(t,s_1)-\partial_t^m\partial_s^nf(t,s_2)\right\vert}{\left\vert s_1-s_2\right\vert^{l-\lfloor l \rfloor}}\\
    &+\sum_{0<l-2m-n<2}\sup_{\substack{(t_1,s),(t_2,s)\in Q_T \\ t_1\neq t_2}}\frac{\left\vert \partial_t^m\partial_s^nf(t_1,s)-\partial_t^m\partial_s^nf(t_2,s)\right\vert}{\left\vert t_1-t_2\right\vert^{\frac{l-2m-n}{2}}}.
\end{align*}
The following lemma shows how $\EE_1$ relates to these
parabolic Hölder spaces.

\begin{lem}
\label{E1BUC}
Let $\alpha\in(0,1)$ and $\eta\in\left[\frac{1}{2},1\right)$ such that $2\eta+\alpha\not\in\ZZ$. Let $a,b\in\RR$ with $a<b$ and let $T>0$. Let $f\in\EE_1([0,T])$. Then 
$
    \partial_sf\in H^{\frac{\zeta}{2},\zeta}([0,T]\times[a,b])
$
for $\zeta:=\eta+\frac{\alpha}{2}-\frac{1}{2}\in(0,1)$.
\end{lem}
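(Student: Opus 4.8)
The plan is to show that $\partial_s f$ lies in each of the three families of seminorms defining $H^{\frac{\zeta}{2},\zeta}$, namely the sup-norm, the spatial Hölder seminorm in $s$ of exponent $\zeta$, and the temporal Hölder seminorm in $t$ of exponent $\frac{\zeta}{2}$; the boundary conditions and periodicity make it harmless to work on a bounded interval $[a,b]$ rather than all of $\RR$. Since $f\in\EE_1([0,T])$ we know by Lemma \ref{appbuc1} (applicable because $\eta\geq\frac12$) that $f\in\textup{BUC}\big([0,T];h^{2\eta+\alpha}_L(\RR)\big)$, hence $\partial_s f\in \textup{BUC}\big([0,T];h^{2\eta+\alpha-1}_L(\RR)\big)$ with $2\eta+\alpha-1 = 2\zeta > \zeta$; this immediately gives the sup-bound and the spatial $\zeta$-Hölder bound uniformly in $t$, since $[\partial_s f(t,\cdot)]_{\zeta,[a,b]}\leq C[\partial_s f(t,\cdot)]_{2\zeta,[a,b]}$ on a bounded interval and the latter is bounded uniformly in $t$.

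The remaining and genuinely substantive point is the temporal Hölder estimate: for $t_1\neq t_2$ in $[0,T]$ one must bound $\Vert \partial_s f(t_1,\cdot)-\partial_s f(t_2,\cdot)\Vert_{\infty}$ by $C|t_1-t_2|^{\zeta/2}$. First I would reduce to the case $t_1,t_2$ bounded away from $0$: near $t=0$ one uses the definition of $\textup{BUC}_{1-\eta}$, which controls $t^{1-\eta}\Vert\partial_t f(t)\Vert_{C^\alpha}$, to get a time-modulus-of-continuity estimate there. For the main range, I would interpolate: $\partial_s f(t,\cdot)$ is bounded in $h^{1+\alpha}_L$ (since $2\eta+\alpha-1\geq \alpha$, more precisely $2\zeta\geq\alpha$ once we note $\eta\geq\frac12$ gives $2\eta+\alpha-1\geq\alpha$) while, using the equation or directly the $\EE_1$-structure, $\partial_t f(t,\cdot)\in h^\alpha_L$, so $\partial_t(\partial_s f) = \partial_s(\partial_t f)$ is controlled in $h^{\alpha-1}_L$, i.e.\ in a negative-order space. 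The standard maximal-regularity interpolation inequality in little Hölder spaces (this is exactly the kind of estimate underlying \cite{lC2011, L1995, CS2001}) then yields, for $f(t_2,\cdot)-f(t_1,\cdot)=\int_{t_1}^{t_2}\partial_t f(\tau,\cdot)\,d\tau$,
\begin{align*}
\Vert \partial_s f(t_2,\cdot)-\partial_s f(t_1,\cdot)\Vert_\infty \leq C\,\Vert f(t_2,\cdot)-f(t_1,\cdot)\Vert_{C^{2+\alpha}}^{\vartheta}\,\Vert f(t_2,\cdot)-f(t_1,\cdot)\Vert_{C^{\alpha}}^{1-\vartheta}
\end{align*}
for the interpolation exponent $\vartheta$ with $1 = \vartheta(2+\alpha)+(1-\vartheta)\alpha$, i.e.\ $\vartheta=\frac{1-\alpha}{2}$; bounding the $C^{2+\alpha}$-factor by the (time-weighted) $\EE_1$-norm and the $C^\alpha$-factor by $C|t_1-t_2|$ via the integral representation gives a power $|t_1-t_2|^{1-\vartheta}=|t_1-t_2|^{(1+\alpha)/2}$, and since $\zeta=\eta+\frac\alpha2-\frac12\leq \frac12+\frac\alpha2-\frac12+\frac12\cdot\frac12 <\frac{1+\alpha}{2}$ — more carefully, $\frac{\zeta}{2}=\frac{\eta}{2}+\frac{\alpha}{4}-\frac14 < \frac12+\frac\alpha4-\frac14 = \frac{1+\alpha}{4}\leq\frac{1+\alpha}{2}$ — this is more than enough. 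One then combines the two time ranges by the usual splitting argument.

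The main obstacle is thus purely the temporal regularity of $\partial_s f$ and the bookkeeping of which Hölder exponents survive the interpolation together with the time-weight near $t=0$; the choice $\eta\geq\frac12$ is exactly what makes $\partial_s f$ land in a space of positive Hölder exponent (namely $2\eta+\alpha-1\geq\alpha>0$) so that interpolation against the negative-order time derivative produces a genuine positive power of $|t_1-t_2|$. Everything else — the sup-bound and spatial seminorm — is a direct consequence of Lemma \ref{appbuc1} and the elementary fact that on a bounded interval a higher Hölder seminorm dominates a lower one. Alternatively, and perhaps more cleanly, one can invoke that $\EE_1([0,T])$ embeds continuously into the anisotropic parabolic little Hölder space $h^{1+\frac{\alpha}{2},2+\alpha}$ away from $t=0$ (this is standard maximal-regularity theory, see \cite{lC2011}), from which $\partial_s f\in H^{\frac{\zeta}{2},\zeta}$ follows by the embedding $h^{1+\frac{\alpha}{2},2+\alpha}\hookrightarrow H^{\frac{\zeta}{2}+\frac12,\zeta+1}$ and then differentiating once in $s$; I would present whichever of these two routes turns out shorter in the write-up, but the interpolation argument above is self-contained given the results already cited.
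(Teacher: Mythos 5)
The treatment of the sup-norm and of the spatial seminorm is correct and follows from Lemma \ref{appbuc1} exactly as you say. The genuine gap is in the temporal estimate: the interpolation inequality
\begin{align}
\Vert\partial_s f(t_2,\cdot)-\partial_s f(t_1,\cdot)\Vert_\infty\leq C\,\Vert f(t_2,\cdot)-f(t_1,\cdot)\Vert_{C^{2+\alpha}}^{\vartheta}\,\Vert f(t_2,\cdot)-f(t_1,\cdot)\Vert_{C^{\alpha}}^{1-\vartheta},\quad \vartheta=\tfrac{1-\alpha}{2},
\end{align}
produces a useful bound only when the first factor is under control. But from $f\in\EE_1([0,T])$ you only know $t^{1-\eta}\Vert f(t)\Vert_{C^{2+\alpha}}\leq C$, i.e.\ $\Vert f(t_2,\cdot)-f(t_1,\cdot)\Vert_{C^{2+\alpha}}$ can grow like $t_1^{\eta-1}\to\infty$ as $t_1\to 0$ (and $\eta<1$). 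Taking, say, $t_1\to 0$ with $t_2$ fixed, your bound is $C\,t_1^{(\eta-1)\vartheta}\,(t_2^\eta-t_1^\eta)^{1-\vartheta}\to\infty$, while the required right-hand side $|t_2-t_1|^{\zeta/2}$ stays bounded. The phrase ``near $t=0$ one uses the definition of $\textup{BUC}_{1-\eta}$ to get a time-modulus-of-continuity estimate there'' does not close this: controlling $t^{1-\eta}\Vert\partial_t f(t)\Vert_{C^\alpha}$ gives you the $C^\alpha$-modulus of $f$ in time, but by itself it says nothing about the $C^0$-modulus of $\partial_s f$ without an interpolation in which \emph{both} anchor norms are uniformly bounded.

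The fix is to interpolate not against $C^{2+\alpha}$ but against $C^{2\eta+\alpha}$, which \emph{is} uniformly bounded in $t$ by Lemma \ref{appbuc1}. Choosing $\vartheta'=\tfrac{1-\alpha}{2\eta}\in(0,1]$ so that $1=\vartheta'(2\eta+\alpha)+(1-\vartheta')\alpha$, one finds with $\Vert f(t_2,\cdot)-f(t_1,\cdot)\Vert_{C^\alpha}\leq C(t_2^\eta-t_1^\eta)\leq C|t_2-t_1|^\eta$ that
\begin{align}
\Vert\partial_s f(t_2,\cdot)-\partial_s f(t_1,\cdot)\Vert_\infty\leq C\,|t_2-t_1|^{\eta(1-\vartheta')}=C\,|t_2-t_1|^{\zeta},
\end{align}
uniformly on $[0,T]$, which is even stronger than the needed exponent $\tfrac{\zeta}{2}$ and requires no case split near $t=0$. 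This is the same trade-off between the weight $\eta$, spatial regularity, and temporal Hölder regularity that the paper invokes in packaged form: it cites the embedding from \cite[Lemma 2.2 d)]{CS2001} and \cite[Prop.\ 1.2 b)]{lC2011}, namely $\EE_1([0,T])\hookrightarrow\textup{BUC}^{\zeta/2}\big([0,T];h^{1+\zeta}_L(\RR)\big)$, from which all three pieces of the $H^{\frac{\zeta}{2},\zeta}$-norm of $\partial_s f$ are read off in one line. Your ``alternative route'' via an embedding into $h^{1+\frac{\alpha}{2},2+\alpha}$ ``away from $t=0$'' has the same deficiency: the whole difficulty of the lemma is the inclusion of $t=0$.
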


\begin{proof}
Since $0<\frac{\eta}{2}-\frac{\alpha}{4}+\frac{1}{4}<\eta$, \cite[Lemma 2.2 d)]{CS2001} together with \cite[Prop. 1.2 b)]{lC2011} give
\begin{align*}
    \EE_1([0,T])\hookrightarrow
   &\,\textup{BUC}^{\eta-\left(\frac{\eta}{2}-\frac{\alpha}{4}+\frac{1}{4}\right)}\Big([0,T];h^{2\left(\frac{\eta}{2}-\frac{\alpha}{4}+\frac{1}{4}\right)+\alpha}_L(\RR)\Big).
 \end{align*}
 With $\zeta=\eta+\frac{\alpha}{2}-\frac{1}{2}\in(0,1)$, we thus have (in particular) $
     \partial_sf\in\textup{BUC}^{\frac{\zeta}{2}}\big([0,T];h^{\zeta}_L(\RR)\big).$ 
 The continuity of $\partial_sf$ in $[0,T]\times[a,b]$ (since $\eta\geq\frac{1}{2}$) and the finiteness of the norm
 \begin{align*}
     \left\Vert \partial_sf\right\Vert_{H^{\frac{\zeta}{2},\zeta}\left(\overline{Q_T}\right)}
     =\,&\sup_{(t,s)\in{Q_T}}\left\vert\partial_s f\right\vert
     +\sup_{\substack{(t,s_1),(t,s_2)\in Q_T\\ s_1\neq s_2}}\frac{\left\vert \partial_sf(t,s_1)-\partial_sf(t,s_2)\right\vert}{\left\vert s_1-s_2\right\vert^{\zeta}}\\
     &+\sup_{\substack{(t_1,s),(t_2,s)\in Q_T \\ t_1\neq t_2}}\frac{\left\vert \partial_sf(t_1,s)-\partial_sf(t_2,s)\right\vert}{\left\vert t_1-t_2\right\vert^{\frac{\zeta}{2}}}
 \end{align*}
 imply that
 $
     \partial_sf\in H^{\frac{\zeta}{2},\zeta}\left([0,T]\times[a,b]\right).
 $
 \end{proof}

\section{Auxiliary results for Section \ref{sec:ste&dep}}
\label{sec:auxstep1}
In the following, we show Step 2 of the proof of Proposition \ref{prop:existence}, i.e.\ that $\textup{D}\Phi(\bar{u},\bar{\rho})$ (see \eqref{eq:Frechet2}) is a linear isomorphism. This is equivalent to show existence and uniqueness of a solution for a certain inhomogeneous linear initial value problem.
 
In this section, $\alpha \in (0,1)$ and $\eta \in \left(\frac12,1\right)$ are fixed.

\subsection{The linearization}

We compute here $\textup{D}\textup{\textbf{F}}(\bar{u},\bar{\rho})$ needed in \eqref{eq:Frechet2}, where $\textup{\textbf{F}}$ is given as in \eqref{F1F2}. By a direct calculation one finds
\begin{align}\nonumber
    \textup{D}\oF_1(\bar{u},\bar\rho)(v,\sigma)
    &=\beta(\bar\rho)\,\partial_s^2v+\beta^\prime(\bar\rho)\,\partial_s^2\bar{u}\;\sigma+\beta^\prime(\bar\rho)\,\partial_s\bar\rho\,\partial_sv\\
    &\quad+\beta^\prime(\bar\rho)\left(\partial_s\bar{u}+\partial_s\phi-c_0\right)\partial_s\sigma+\beta^{\prime\prime}(\bar\rho)\,\partial_s\bar\rho\left(\partial_s\bar{u}+\partial_s\phi-c_0\right)\sigma\\
    &\quad+\textup{D}\Lambda_u(\bar{u},\bar\rho)(v,\sigma), \label{DF1} \\ \nonumber
    \textup{D}\oF_2(\bar{u},\bar\rho)(v,\sigma)
    &=\mu\,\partial_s^2\sigma-\frac{1}{2}\beta^{\prime\prime}(\bar\rho)\left(\partial_s\bar{u}+\partial_s\phi-c_0\right)^2\sigma-\beta^\prime(\bar\rho)\left(\partial_s\bar{u}+\partial_s\phi-c_0\right)\partial_s v\\
    &\quad+\textup{D}\Lambda_\rho(\bar{u},\bar\rho)(v,\sigma).
\label{DF2}
\end{align}
Let $T>0$. We first observe that by \eqref{eq:LambdaupI}, in the Lagrange multipliers only  spatial derivatives of first order appear. More specifically,  $\Lambda_u$ is well defined with $\Lambda_u(u,\rho)\in \EE_0([0,T])$ for $(u, \rho)\in \mathbb{V}([0,T])$ where
\begin{align}\label{eq:E1/2} \mathbb{V}([0,T]):=\Big\{ &(u,\rho)\!\in \mathbb{F}([0,T])\!:=\!\textup{BUC}^1_{1-\eta}([0,T];h^\alpha_L(\RR))\cap \textup{BUC}_{1-\eta}([0,T];h^{1+\alpha}_L(\RR)) \nonumber\\
  &\text{such that } \inf_{[0,T]}\det\Pi(u+\phi)>0\Big\}.
\end{align}
This is an open subset of $\mathbb{F}([0,T])$ by continuity. Moreover, using $\eta>\frac12$ and \eqref{eq:tracespace}, any initial datum $(v_0, \sigma_0)\in \gamma\EE_1$ is in $\mathbb{F}([0,T])$ if we identify it with the function $(t, s)\mapsto (v_0(s), \sigma_0(s))$.
It is not difficult to see that
\begin{align}\label{eq:Lambdau mapping}
\mathbb{V}([0,T])    \ni (u, \rho)\mapsto\Lambda_u(u, \rho) \in \EE_0([0,T])
\end{align}
is of class $C^1$. Moreover, it follows from the product rule that
\begin{align}
    \textup{D}\Lambda_u(\bar{u}, \bar{\rho})(v,\sigma) &= \textup{D}\Pi^{-1}(\bar{u}+\phi)(v)\, \mathcal{J}(\bar{u}, \bar{\rho})\cdot\begin{pmatrix}
\sin(\bar{u}+\phi) \\ -\cos(\bar{u}+\phi)
    \end{pmatrix} \nonumber \\
    &\quad + \Pi^{-1}(\bar{u}+\phi)\, \textup{D}\mathcal{J}(\bar u, \bar{\rho})(v, \sigma) \cdot\begin{pmatrix}
\sin(\bar{u}+\phi) \\ -\cos(\bar{u}+\phi)
    \end{pmatrix}\nonumber\\
    &\quad +
    \Pi^{-1}(\bar{u}+\phi) \,\mathcal{J}(\bar{u}, \bar{\rho}) \cdot\begin{pmatrix}
\cos(\bar{u}+\phi) \\ \sin(\bar{u}+\phi)
    \end{pmatrix}\label{eq:DLambdau}
\end{align}
for all $(v, \sigma)\in \mathbb{F}([0,T])$. A similar argument yields that $\Lambda_\rho\colon \mathbb{V}([0,T])\to \EE_0([0,T])$ is well-defined and $C^1$ with derivative
\begin{align}
    \textup{D}\Lambda_\rho(\bar{u},\bar\rho)(v,\sigma)
    =\frac{1}{2L}\!\int_0^L\!\!\beta^{\prime\prime}(\bar\rho)\left(\partial_s\bar{u}+\partial_s\phi-c_0\right)^2\!\sigma+2\beta^\prime(\bar\rho)\left(\partial_s\bar{u}+\partial_s\phi-c_0\right)\partial_sv\intd s.
\end{align}

\subsection{Linear evolution problem with time-independent coefficients}

We consider an auxiliary linear evolution problem.
Since we rely on \cite{lC2011}, where time-\linebreak independent coefficients are used, we would like to freeze the coefficients of the terms in \eqref{DF1} and \eqref{DF2} in $t=0$. For the second term in \eqref{DF1}, the regularity of the initial datum does not allow this. Therefore, we treat this term differently from the others and move it together with $\textup{D}\Lambda_u$ and $\textup{D}\Lambda_\rho$, which are nonlocal, to the right hand side of the inhomogeneous initial value problem we want to solve.
This explains the structure of the differential operator chosen below.

\begin{prop}
\label{sollinprob}
Let $(u_0,\rho_0)\in\gamma\EE_1$ such that $u_0$ satisfies \eqref{eq:hypu}. 
Let $(\bar{u},\bar{\rho}) \in \EE_1([0,\bar{T}])$ be the function constructed in the proof of Proposition \ref{prop:existence}. Then for all $T\in (0, \bar{T}]$
the mapping 
\begin{align*}
    \oJ\colon&\EE_1([0,{T}])\to\EE_0([0,{T}])\times\gamma\EE_1,\quad
    (v,\sigma)\mapsto
    \big(
    \partial_t\left(v,\sigma\right)-\sA(v,\sigma) - \Psi,\;
    (v,\sigma)(0,\cdot)
    \big)
\end{align*}
where
\begin{align*}
    \sA(v,\sigma) :=&
    \Big( \beta(\rho_0)\partial_s^2v
    +\beta'(\rho_0)\partial_s\rho_0\partial_sv 
    +\beta'(\rho_0)(\partial_su_0+\partial_s\phi-c_0)\partial_s\sigma\\
    & \quad
    +\beta''(\rho_0)\partial_s\rho_0(\partial_su_0+\partial_s\phi-c_0)\sigma,\\
    &\quad \mu\partial_s^2\sigma-\frac{1}{2}\beta''(\rho_0)(\partial_su_0+\partial_s\phi-c_0)^2\sigma-\beta'(\rho_0)(\partial_su_0+\partial_s\phi-c_0)\partial_sv \Big), \\
    \Psi=&\;\Psi(v_0,\sigma_0) := 
    \Big( \beta'(\bar\rho)\partial_s^2\bar{u}\,\sigma_0
   +\textup{D}\Lambda_u(\bar{u},\bar{\rho})(v_0,\sigma_0),\;\textup{D}\Lambda_\rho(\bar{u},\bar\rho)(v_0,\sigma_0)\Big)
\end{align*}
is well-defined and an isomorphism. Here $(v_0,\sigma_0)=(v,\sigma)(0,\cdot)\in \mathbb{F}([0,{T}])$, cf.\ \eqref{eq:E1/2}.
Equivalently\footnote{Here we slightly abuse notation, denoting by $(v_0, \sigma_0)$ both the initial datum of the problem \eqref{eq:probD1} and the evaluation at $t=0$ of $(v, \sigma)\in \EE_1([0,{T}])$. However, for solving \eqref{eq:probD1}, this does not make a difference.}, for any pair
$(\varphi_1,\varphi_2)\in\EE_0([0,{T}])$ and $(v_0,\sigma_0) \in \gamma \EE_1$ there exists a unique solution
$(u,\rho)\in\EE_1([0,{T}])$
of the linear problem 
\begin{align}
\begin{split}
    \label{eq:probD1}
    \begin{cases}
    \partial_t(v,\sigma)-\sA(v,\sigma) =(\varphi_1,\varphi_2)+ \Psi &\textup{ in } (0,{T})\times\RR,\\
    (v,\sigma)(0,\cdot)=(v_0,\sigma_0) &\textup{ on } \RR.
    \end{cases}
\end{split}    
\end{align}
\end{prop}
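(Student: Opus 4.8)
The plan is to reduce the statement about the affine-linear operator $\oJ$ to a maximal regularity result from \cite{lC2011} applied to the principal part $\sA$, treating the nonlocal and low-regularity terms $\Psi$ as a fixed inhomogeneity depending only on the (known) trace $(v_0,\sigma_0)$. First I would observe that $\Psi$ does not depend on $(v,\sigma)$ itself but only on $(v,\sigma)(0,\cdot) = (v_0,\sigma_0)\in\gamma\EE_1$, so the equivalence stated in the footnote holds: solving $\oJ(v,\sigma) = ((\varphi_1,\varphi_2),(v_0,\sigma_0))$ is the same as solving the fixed inhomogeneous Cauchy problem \eqref{eq:probD1} with right-hand side $(\varphi_1,\varphi_2)+\Psi(v_0,\sigma_0)$. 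Thus it suffices to prove: (a) $\Psi(v_0,\sigma_0)\in\EE_0([0,T])$ for every $(v_0,\sigma_0)\in\gamma\EE_1$, and the map $(v_0,\sigma_0)\mapsto\Psi(v_0,\sigma_0)$ is bounded linear into $\EE_0([0,T])$; (b) $\sA\colon\EE_1([0,T])\to\EE_0([0,T])$ is well-defined and bounded; (c) for every $g\in\EE_0([0,T])$ and every $(v_0,\sigma_0)\in\gamma\EE_1$ the problem $\partial_t(v,\sigma)-\sA(v,\sigma)=g$, $(v,\sigma)(0,\cdot)=(v_0,\sigma_0)$, has a unique solution in $\EE_1([0,T])$.

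For (b) I would check each summand in $\sA$: the coefficients $\beta(\rho_0)$, $\beta'(\rho_0)\partial_s\rho_0$, $\beta'(\rho_0)(\partial_su_0+\partial_s\phi-c_0)$, $\beta''(\rho_0)\partial_s\rho_0(\partial_su_0+\partial_s\phi-c_0)$, etc., all lie in $h^\alpha_L(\RR)$ because $(u_0,\rho_0)\in\gamma\EE_1=h^{2\eta+\alpha}_L(\RR)$ with $2\eta+\alpha>1+\alpha$, so $\partial_s u_0,\partial_s\rho_0\in h^{2\eta+\alpha-1}_L(\RR)\hookrightarrow h^\alpha_L(\RR)$, and $\beta\in C^\infty$ acts on little Hölder spaces (Remark \ref{rem:appbuc2}(iii)); products of little Hölder functions stay in the lower exponent by Lemma \ref{multlith}. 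Multiplying such an $h^\alpha_L$ coefficient by $\partial_s^2(v,\sigma)$, $\partial_s(v,\sigma)$ or $(v,\sigma)$ of a function in $\EE_1$ lands in $\EE_0$ by Remark \ref{rem:appbuc2}(i)--(ii). For (a), the term $\beta'(\bar\rho)\partial_s^2\bar{u}\,\sigma_0$ is handled using that, by the extra regularity of $\bar{u}$ established in Step 1 of the proof of Proposition \ref{prop:existence} (namely $\partial_s\bar u\in\tilde\EE_1([0,\bar T])$ with time weight $1-\tilde\eta$, $\tilde\eta=\eta-\tfrac12$), we have $\partial_s^2\bar u\in\textup{BUC}_{1-\tilde\eta}([0,\bar T];h^\alpha_L(\RR))$; multiplying by the fixed function $\beta'(\bar\rho)\sigma_0\in C_L([0,L])$ (indeed $\sigma_0\in h^{2\eta+\alpha}_L$) and noting $1-\tilde\eta = \tfrac12 - \tilde\eta + \tfrac12 > 1-\eta$... more carefully, since $1-\tilde\eta > 1-\eta$ the weight is milder, hence this term is in $\EE_0([0,T])$; linearity and boundedness in $(v_0,\sigma_0)$ is clear. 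The nonlocal terms $\textup{D}\Lambda_u(\bar u,\bar\rho)(v_0,\sigma_0)$ and $\textup{D}\Lambda_\rho(\bar u,\bar\rho)(v_0,\sigma_0)$ are bounded linear into $\EE_0([0,T])$ because $\Lambda_u,\Lambda_\rho$ are $C^1$ from $\mathbb{V}([0,T])$ to $\EE_0([0,T])$ (as recorded around \eqref{eq:Lambdau mapping}), and the explicit formulas \eqref{eq:DLambdau} only involve $\Pi^{-1}(\bar u+\phi)$, $\mathcal{J}(\bar u,\bar\rho)$ and first-order spatial derivatives, together with $\det\Pi(\bar u+\phi)$ bounded away from zero by \eqref{eq:welldefdet}.

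For (c), the heart of the matter, I would invoke \cite[Theorem 6.4]{lC2011} (the same result already used in Step 1 of Proposition \ref{prop:existence}): $\sA$ is a second-order elliptic system with \emph{time-independent} coefficients in $h^\alpha_L(\RR)$ — this is exactly why the coefficients were frozen at $t=0$ and why the problematic $\beta'(\bar\rho)\partial_s^2\bar u\,\sigma$ term was moved into $\Psi$. The principal part of $\sA$ is the diagonal operator $(v,\sigma)\mapsto(\beta(\rho_0)\partial_s^2 v,\mu\partial_s^2\sigma)$, which is (normally) elliptic since $\beta(\rho_0)\geq\min_{[0,L]}\beta(\rho_0)>0$ and $\mu>0$; the remaining terms are lower order. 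Hence the hypotheses of the maximal regularity theorem in time-weighted little Hölder spaces are met with trace space $\gamma\EE_1=h^{2\eta+\alpha}_L(\RR)$ (using $2\eta+\alpha\notin\ZZ$), yielding for each $g\in\EE_0([0,T])$ and $(v_0,\sigma_0)\in\gamma\EE_1$ a unique solution $(v,\sigma)\in\EE_1([0,T])$ with continuous dependence, i.e.\ $\partial_t - \sA$ together with the trace is an isomorphism $\EE_1([0,T])\to\EE_0([0,T])\times\gamma\EE_1$. Combining with (a)--(b): $\oJ(v,\sigma) = (\partial_t(v,\sigma)-\sA(v,\sigma),\,(v,\sigma)(0,\cdot)) - (\Psi(v,\sigma(0,\cdot)),0)$, and since $(v,\sigma)\mapsto(\Psi((v,\sigma)(0,\cdot)),0)$ is a bounded operator $\EE_1([0,T])\to\EE_0([0,T])\times\gamma\EE_1$ that factors through the trace, $\oJ$ is the isomorphism $\partial_t-\sA$ composed on the range side with the shift by $\Psi$, hence itself an isomorphism. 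The main obstacle I anticipate is bookkeeping the regularity of $\Psi$, specifically verifying that $\beta'(\bar\rho)\partial_s^2\bar u\,\sigma_0\in\EE_0([0,T])$ with the correct time weight — this is precisely where the extra regularity of $\bar u$ from Step 1 (the $\tilde\eta$-improvement, coming from $\eta>\tfrac12$) is indispensable — and, secondarily, confirming that the time-independent system $\sA$ genuinely falls under the scope of \cite[Theorem 6.4]{lC2011}, including the (normal) ellipticity/parabolicity of the coupled principal part, which here is straightforward because that part is diagonal with positive coefficients.
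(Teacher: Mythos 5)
Your overall approach is the same as the paper's: reduce to the time-independent, second-order parabolic system $\partial_t-\sA$ with coefficients frozen at $t=0$, invoke \cite[Theorem 6.4]{lC2011} for maximal regularity in time-weighted little H\"older spaces, check that the coefficients of $\sA$ lie in $h^{\alpha}_L(\RR)$ (Lemma \ref{multlith}, Remark \ref{rem:appbuc2}, $\gamma\EE_1=h^{2\eta+\alpha}_L$), note uniform ellipticity of the diagonal principal part from $\beta(\rho_0)>0$ and $\mu>0$, and treat $\Psi$ as a fixed inhomogeneity depending only on the trace $(v_0,\sigma_0)$. The resulting algebraic argument that $\oJ$ is an isomorphism because $\partial_t-\sA$ is (and $\Psi$ factors through the trace) is correct.

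There is, however, one genuine wobble in your verification that $\beta'(\bar\rho)\partial_s^2\bar u\,\sigma_0\in\EE_0([0,T])$. You invoke the \emph{extra} regularity $\partial_s\bar u\in\tilde\EE_1$ from Step~1, infer $\partial_s^2\bar u\in\textup{BUC}_{1-\tilde\eta}([0,\bar T];h^{\alpha}_L(\RR))$, and conclude that this lies in $\EE_0=\textup{BUC}_{1-\eta}([0,T];h^{\alpha}_L(\RR))$ ``since $1-\tilde\eta>1-\eta$, the weight is milder.'' This inclusion goes the wrong way: for $\gamma_1<\gamma_2$ one has $\textup{BUC}_{\gamma_1}\subset\textup{BUC}_{\gamma_2}$ (multiply by $t^{\gamma_2-\gamma_1}\to 0$), so $1-\eta<1-\tilde\eta$ gives $\textup{BUC}_{1-\eta}\subset\textup{BUC}_{1-\tilde\eta}$, not the reverse. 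Your premise therefore does \emph{not} imply the conclusion. The fix is simpler and is what the paper uses: already from $\bar u\in\EE_1([0,T])$, Remark \ref{rem:appbuc2}(i) yields $\partial_s^2\bar u\in\EE_0([0,T])$; combined with $\beta'(\bar\rho)\in\EE_1$ (Remark \ref{rem:appbuc2}(iii)), Remark \ref{rem:appbuc2}(ii) and Lemma \ref{multlith} (to multiply by the time-independent $\sigma_0\in h^{2\eta+\alpha}_L$) one gets $\beta'(\bar\rho)\partial_s^2\bar u\,\sigma_0\in\EE_0$ without any appeal to $\tilde\EE_1$. (Also note $\beta'(\bar\rho)$ is time-dependent, not a ``fixed function.'') The $\tilde\EE_1$-regularity of $\partial_s\bar u$ is genuinely needed, but later, in the perturbation estimates Lemma \ref{est:v-v0} and Proposition \ref{prop:sollinprobt}, not in this proposition.
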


\begin{proof}
We want to use \cite{lC2011}, which works with functions on the one-dimensional torus, or equivalently for periodic functions, cf.\ \cite[Proposition 1.1]{lC2011}. 
With \eqref{eq:tracespace} and Lemma \ref{multlith},  the time-independent coefficients of the operator $\sA$ are in $h^{\alpha}_L(\RR)$. Because $\beta(\rho_0)$ is bounded away from zero and $\mu>0$, $\sA$ is a uniformly elliptic operator.
Apart from this we have to check that $\Psi 
\in\EE_0([0,{T}])$. 
Using Lemma \ref{multlith} and Remark \ref{rem:appbuc2}, we find $\beta^\prime(\bar{\rho})\in \EE_1([0,{T}])$ and thus $\beta^\prime(\bar\rho)\partial_s^2\bar{u}\,\sigma_0\in\EE_0([0,{T}])$.
From \eqref{eq:Lambdau mapping}, we conclude that $\textup{D}\Lambda_u(\bar{u},\bar{\rho})(v_0, \sigma_0)\in \EE_0([0,{T}])$ and similarly for $\Lambda_\rho$.
Applying \cite[Theorem 6.4]{lC2011} yields a unique solution $(v,\sigma)\in\EE_1([0,{T}])$ of  \eqref{eq:probD1}.
\end{proof}

From the fact that the mapping $\oJ$ from Proposition \ref{sollinprob} is an isomorphism, an estimate of the norm of the solution follows implicitly. We now show that the constant in this estimate can be chosen independent of $T$.

\begin{lem} 
\label{normabsch}
Let $(u_0,\rho_0)\in\gamma\EE_1$ such that $u_0$ satisfies \eqref{eq:hypu}.  Let $(\bar{u},\bar\rho)\in\EE_1([0,\bar{T}])$ be the map constructed in the proof of Proposition \ref{prop:existence}. Let $T\in (0, \bar{T}]$. Then the solution operator $\oJ^{-1}$ satisfies the estimate
\begin{align*}
     \left\Vert\oJ^{-1}  \big((\varphi_1,\varphi_2),(v_0,\sigma_0)\big)\right\Vert_{\EE_1([0,T])}\leq C \left(\left\Vert \left(\varphi_1,\varphi_2\right)\right\Vert_{\EE_0([0,T])}+\left\Vert(v_0,\sigma_0)\right\Vert_{\gamma\EE_1}\right) 
 \end{align*}
for all $(\varphi_1,\varphi_2)\in\EE_0([0,T])$ and $(v_0,\sigma_0) \in \gamma\EE_1$ with $C=C(u_0,\rho_0,\bar{T})$.
\end{lem}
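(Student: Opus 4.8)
The plan is to deduce the $T$-uniform estimate from the (a priori $T$-dependent) bound on the fixed interval $[0,\bar{T}]$, using an extension of the data whose $\EE_0$-norm does not grow as $T\to 0$. First I would invoke Proposition \ref{sollinprob} with $T=\bar{T}$: the map $\oJ=\oJ_{\bar{T}}\colon\EE_1([0,\bar{T}])\to\EE_0([0,\bar{T}])\times\gamma\EE_1$ is a bounded linear isomorphism between Banach spaces, so by the bounded inverse theorem there is a constant $C_0<\infty$, depending on $u_0,\rho_0,\bar{T}$ only through the fixed function $(\bar{u},\bar{\rho})$ from the proof of Proposition \ref{prop:existence}, with $\|\oJ_{\bar{T}}^{-1}(g,w)\|_{\EE_1([0,\bar{T}])}\leq C_0(\|g\|_{\EE_0([0,\bar{T}])}+\|w\|_{\gamma\EE_1})$. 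The whole point is then to produce, for each $T\in(0,\bar{T}]$, a linear extension operator $\mathcal{R}_T\colon\EE_0([0,T])\to\EE_0([0,\bar{T}])$ whose operator norm is bounded by $1$ independently of $T$ and which restricts back to the identity on $[0,T]$.

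For such an operator I would take $\mathcal{R}_T\xi(t):=\xi(t)$ for $t\in(0,T]$ and $\mathcal{R}_T\xi(t):=(T/t)^{1-\eta}\,\xi(T)$ for $t\in[T,\bar{T}]$. The factor $(T/t)^{1-\eta}$ is chosen precisely so that the time weight cancels on the added interval: for $t\in[T,\bar{T}]$ one has $t^{1-\eta}\|\mathcal{R}_T\xi(t)\|_{C^\alpha(\RR)}=T^{1-\eta}\|\xi(T)\|_{C^\alpha(\RR)}$, which is constant in $t$ and, by continuity of $t\mapsto t^{1-\eta}\xi(t)$, equals $\lim_{t\to T^-}t^{1-\eta}\|\xi(t)\|_{C^\alpha(\RR)}\leq\|\xi\|_{\EE_0([0,T])}$. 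The behaviour near $t=0$ is unchanged, so $\mathcal{R}_T\xi\in\EE_0([0,\bar{T}])$ with $\|\mathcal{R}_T\xi\|_{\EE_0([0,\bar{T}])}=\|\xi\|_{\EE_0([0,T])}$. I expect this design of $\mathcal{R}_T$ to be the only genuinely delicate point: a naive constant extension $\mathcal{R}_T\xi\equiv\xi(T)$ on $[T,\bar{T}]$ would produce a divergent factor $(\bar{T}/T)^{1-\eta}$, so the weight must be built into the extension.

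With $\mathcal{R}_T$ in hand the rest is bookkeeping. Given data $(\varphi_1,\varphi_2)\in\EE_0([0,T])$ and $(v_0,\sigma_0)\in\gamma\EE_1$, I would set $(\tilde{v},\tilde{\sigma}):=\oJ_{\bar{T}}^{-1}\big(\mathcal{R}_T(\varphi_1,\varphi_2),(v_0,\sigma_0)\big)\in\EE_1([0,\bar{T}])$. Since the term $\Psi=\Psi(v_0,\sigma_0)$ in the definition of $\oJ$ depends only on $(v_0,\sigma_0)$ and $(\bar u,\bar\rho)$ (not on $T$), and since $\mathcal{R}_T(\varphi_1,\varphi_2)$ restricts to $(\varphi_1,\varphi_2)$ on $[0,T]$, the restriction $(\tilde{v},\tilde{\sigma})|_{[0,T]}\in\EE_1([0,T])$ solves exactly \eqref{eq:probD1} on $(0,T)\times\RR$ with data $(\varphi_1,\varphi_2)$ and $(v_0,\sigma_0)$. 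By the uniqueness statement in Proposition \ref{sollinprob} this forces $(\tilde{v},\tilde{\sigma})|_{[0,T]}=\oJ^{-1}\big((\varphi_1,\varphi_2),(v_0,\sigma_0)\big)$.

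Finally, because the $\EE_1$-norm over $(0,T]$ is a supremum over a subinterval of $(0,\bar{T}]$, the restriction map $\EE_1([0,\bar{T}])\to\EE_1([0,T])$ is norm non-increasing, and chaining the three facts above gives
\[
\big\|\oJ^{-1}\big((\varphi_1,\varphi_2),(v_0,\sigma_0)\big)\big\|_{\EE_1([0,T])}\leq\|(\tilde{v},\tilde{\sigma})\|_{\EE_1([0,\bar{T}])}\leq C_0\big(\|\mathcal{R}_T(\varphi_1,\varphi_2)\|_{\EE_0([0,\bar{T}])}+\|(v_0,\sigma_0)\|_{\gamma\EE_1}\big),
\]
and by the norm identity for $\mathcal{R}_T$ the right-hand side equals $C_0\big(\|(\varphi_1,\varphi_2)\|_{\EE_0([0,T])}+\|(v_0,\sigma_0)\|_{\gamma\EE_1}\big)$, which is the claim with $C=C_0=C(u_0,\rho_0,\bar{T})$. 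The only remaining checks — that $\mathcal{R}_T\xi$ really lies in $\textup{BUC}_{1-\eta}([0,\bar{T}];h^\alpha_L(\RR))$ (continuity at $t=T$, uniform continuity and the limit at $t=0$) and that restriction indeed maps $\EE_1([0,\bar{T}])$ into $\EE_1([0,T])$ — are routine verifications against the definitions in Section \ref{sec:BUC-spaces}.
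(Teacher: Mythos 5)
Your proof is correct and follows the same overall strategy as the paper's: extend the inhomogeneity from $[0,T]$ to the fixed interval $[0,\bar{T}]$, invoke the isomorphism from Proposition~\ref{sollinprob} on $[0,\bar{T}]$ (so the constant depends only on $\bar{T}$), and restrict back using uniqueness and the fact that \eqref{eq:probD1} is local in time. Where you diverge from the paper is the construction of the extension, and this is actually a substantive improvement. The paper extends $(\varphi_1,\varphi_2)$ by its constant value at $t=T$ on $(T,\bar{T}]$ and asserts the norm identity $\Vert(\varphi_1^*,\varphi_2^*)\Vert_{\EE_0([0,\bar{T}])}=\Vert(\varphi_1,\varphi_2)\Vert_{\EE_0([0,T])}$. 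As you observe, this fails for the constant extension: for $t\in(T,\bar{T}]$ the weighted quantity is $t^{1-\eta}\Vert\varphi(T)\Vert_{C^\alpha}$, which is increasing in $t$, so one only obtains $\Vert\varphi^*\Vert_{\EE_0([0,\bar{T}])}\leq(\bar{T}/T)^{1-\eta}\Vert\varphi\Vert_{\EE_0([0,T])}$, a factor that blows up as $T\to0$. That is precisely the $T$-dependence the lemma is supposed to eliminate (and which is essential for the Neumann-series argument in Proposition~\ref{prop:sollinprobt}). Your choice $\mathcal{R}_T\xi(t)=(T/t)^{1-\eta}\xi(T)$ on $[T,\bar{T}]$ makes $t^{1-\eta}\mathcal{R}_T\xi(t)$ exactly constant there, so the required norm identity actually holds. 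In short, your proof keeps the paper's structure but repairs a genuine gap in the published extension step; everything else you check (membership of $\mathcal{R}_T\xi$ in $\EE_0([0,\bar{T}])$, locality and uniqueness giving $(\tilde v,\tilde\sigma)|_{[0,T]}=\oJ^{-1}((\varphi_1,\varphi_2),(v_0,\sigma_0))$, and the norm-nonincreasing restriction) is routine and correct.
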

Notice that here we do not write explicitely the dependence on $T$ of the solution operator. This abuse of notation is justified in the proof below. 
\begin{proof}[Proof of Lemma \ref{normabsch}]
Proposition \ref{sollinprob} already yields the claimed inequality with $C=C(u_0,\rho_0,T)$. We show that $C$ can be chosen only depending on $\bar{T}>T$ by an extension argument. Let $(v,\sigma)=\oJ^{-1} \big((\varphi_1,\varphi_2),(v_0,\sigma_0)\big)\in \EE_1([0,T])$. Consider
\begin{align*}
    (\varphi_{1}^*,\varphi_{2}^*)(t,s)=
    \begin{cases}
    (\varphi_{1},\varphi_{2})(t,s) &\textup{ for } (t,s)\in[0,T]\times\RR,\\
    (\varphi_{1},\varphi_{2})(T,s) &\textup{ for } (t,s)\in(T,\bar{T}]\times\RR.
    \end{cases}
\end{align*}
Then $(\varphi_{1}^*,\varphi_{2}^*)\in\EE_0([0,\bar{T}])$ and we can apply Proposition \ref{sollinprob} and obtain 
$(v^*,\sigma^*)=\oJ^{-1} \big((\varphi_1^*,\varphi_2^*),(v_0,\sigma_0)\big)\in\EE_1([0,\bar{T}])$, the  solution of \eqref{eq:probD1} on $(0,\bar{T}]$ with $(\varphi_1^*, \varphi_2^*)$ instead of $(\varphi_1, \varphi_2)$. The same proposition gives that $\oJ^{-1} $ is an isomorphism from $\EE_1([0,\bar{T}])$ to $\EE_0([0,\bar{T}])$. Hence, there exists a constant $C=C(u_0,\rho_0,\bar{T})$ such that
\begin{align*}
    \left\Vert(v^*,\sigma^*)\right\Vert_{\EE_1([0,\bar{T}])}\leq C \left(\left\Vert \left(\varphi_1^*,\varphi_2^*\right)\right\Vert_{\EE_0([0,\bar{T}])}+\left\Vert(v_0,\sigma_0)\right\Vert_{\gamma\EE_1}\right).
\end{align*}
Because of the uniqueness of the solution and using that \eqref{eq:probD1} is local in time, we find $(v^*,\sigma^*)\vert_{[0,T]}=(v,\sigma)$. The claim follows using that  $
\left\Vert(v,\sigma)\right\Vert_{\EE_1([0,T])}\leq\left\Vert(v^*,\sigma^*)\right\Vert_{\EE_1([0,\bar{T}])},$
together with
$\left\Vert\left(\varphi_1^*,\varphi_2^*\right)\right\Vert_{\EE_0([0,\bar{T}])}=\left\Vert\left(\varphi_1,\varphi_2\right)\right\Vert_{\EE_0([0,T])}$.
\end{proof}

\subsection{Some auxiliary estimates}
\label{sec:auxest}
We collect here some technical results, which we use to examine the differences appearing when comparing the linear operators in \eqref{eq:Frechet2} and $\mathbf{J}$ from Proposition \ref{sollinprob}.

Throughout this subsection, we fix $(u_0, \rho_0)\in \gamma\EE_1$ such that $u_0$ satisfies \eqref{eq:hypu} and denote by $(\bar{u}, \bar{\rho})\in \EE_1([0,\bar{T}])$ the function constructed in the proof of Proposition \ref{prop:existence}. In particular, $\eta\in (\frac{1}{2},1)$, $\tilde{\eta}=\eta-\frac{1}{2}$ and $\partial_s (\bar{u}, \bar{\rho})\in \tilde{\EE}_1([0,\bar{T}])$.
\begin{lem}\label{est:v-v0}
Let $t\in (0,\bar{T}]$. Then we have
\begin{align}
\left\Vert \bar{u}(t)-u_0\right\Vert_{C^\alpha}\leq \left\Vert \bar{u}\right\Vert_{\EE_1}\frac{t^\eta}{\eta}, \qquad \left\Vert \partial_s\bar{u}(t)-\partial_s u_0\right\Vert_{C^\alpha}&\leq \left\Vert \partial_s\bar{u}\right\Vert_{\tilde{\EE}_1}\frac{t^{\tilde{\eta}}}{\tilde{\eta}},
\end{align}
and similarly for $\bar{\rho}$.
\end{lem}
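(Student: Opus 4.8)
The plan is to obtain both inequalities from the fundamental theorem of calculus for Banach-space-valued functions, using that $(\bar u,\bar\rho)$ — and, crucially, its spatial derivative $\partial_s(\bar u,\bar\rho)$ — lie in time-weighted $\textup{BUC}^1$-spaces with \emph{positive} weight exponent. Concretely, for the first estimate I would start from the identity \eqref{eq:martin1}, which for $0<s<t\leq\bar T$ reads $\bar u(t)-\bar u(s)=\int_s^t\partial_t\bar u(\tau)\intd\tau$ in $h^\alpha_L(\RR)$. Since $(\bar u,\bar\rho)\in\EE_1([0,\bar T])$, the definition of the $\EE_1$-norm gives the pointwise bound $\Vert\partial_t\bar u(\tau)\Vert_{C^\alpha}\leq\Vert\bar u\Vert_{\EE_1}\,\tau^{\eta-1}$ for $\tau\in(0,\bar T]$, and because $\eta>0$ the right-hand side is integrable near $\tau=0$; hence $\tau\mapsto\partial_t\bar u(\tau)$ is Bochner integrable on $(0,t)$ with values in $h^\alpha_L(\RR)$.

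The second step is to let $s\to 0^+$ in that identity. Here I would invoke Lemma \ref{appbuc1} (recall $\eta\geq\frac12$), which yields $\bar u(s)\to u_0$ in $C^\alpha_L(\RR)$, to conclude $\bar u(t)-u_0=\int_0^t\partial_t\bar u(\tau)\intd\tau$. Taking $C^\alpha$-norms and inserting the pointwise bound gives $\Vert\bar u(t)-u_0\Vert_{C^\alpha}\leq\Vert\bar u\Vert_{\EE_1}\int_0^t\tau^{\eta-1}\intd\tau=\Vert\bar u\Vert_{\EE_1}\,t^\eta/\eta$, which is the first claim; the estimate for $\bar\rho$ is obtained verbatim.

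For the estimate on $\partial_s\bar u-\partial_s u_0$ I would run exactly the same scheme, but now in the space $\tilde\EE_1([0,\bar T])$: by the construction of $(\bar u,\bar\rho)$ in the proof of Proposition \ref{prop:existence} we have $\partial_s(\bar u,\bar\rho)\in\tilde\EE_1([0,\bar T])$, so $\partial_s\bar u\in\textup{BUC}^1_{1-\tilde\eta}([0,\bar T];h^\alpha_L(\RR))$ with $\Vert\partial_t\partial_s\bar u(\tau)\Vert_{C^\alpha}\leq\Vert\partial_s\bar u\Vert_{\tilde\EE_1}\,\tau^{\tilde\eta-1}$, and $\tilde\eta>0$ again secures integrability near $0$. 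Applying \eqref{eq:martin1} to $\partial_s\bar u$, letting $s\to 0$ (using Remark \ref{rem:appbuc1}, which gives $\partial_s\bar u(s)\to\partial_s u_0$ in $C^\alpha_L(\RR)$), and integrating yields $\Vert\partial_s\bar u(t)-\partial_s u_0\Vert_{C^\alpha}\leq\Vert\partial_s\bar u\Vert_{\tilde\EE_1}\,t^{\tilde\eta}/\tilde\eta$, and similarly for $\partial_s\bar\rho$.

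I do not expect a genuine obstacle: the statement is essentially a quantitative form of the continuity up to $t=0$ that is built into the time-weighted spaces. The only points needing a little care are the justification of the Bochner-integral representation on the half-open interval $(0,t]$ — this is precisely where $\eta>0$, respectively $\tilde\eta>0$, is used to guarantee integrability of the weight near $0$ — and the identification of the traces $u_0$ and $\partial_s u_0$ with the limits of $\bar u(t)$ and $\partial_s\bar u(t)$ as $t\to0$, which is supplied by Lemma \ref{appbuc1} and Remark \ref{rem:appbuc1}.
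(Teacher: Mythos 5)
Your proof is correct and follows essentially the same route as the paper: both rest on the identity \eqref{eq:martin1}, the pointwise bound $\Vert\partial_t\bar u(\tau)\Vert_{C^\alpha}\leq\Vert\bar u\Vert_{\EE_1}\tau^{\eta-1}$ coming from the definition of the time-weighted norm, the trace identification $\lim_{t'\to 0}\bar u(t')=u_0$ from Lemma \ref{appbuc1}, and the integral $\int_0^t\tau^{\eta-1}\intd\tau=t^\eta/\eta$, with the second estimate obtained identically in $\tilde\EE_1$ using $\partial_s(\bar u,\bar\rho)\in\tilde\EE_1([0,\bar T])$. The only difference is cosmetic: you pass to the limit $s\to0$ first and then take norms, whereas the paper keeps the limit inside the norm; the substance is the same.
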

\begin{proof}
Inspired by \eqref{eq:martin1}, we compute
\begin{align}\label{estrho0-rho}
    &\big\Vert \bar{u}(t)-u_0 \big\Vert_{C^{\alpha}}
    =\big\Vert\lim_{t'\to0+} (\bar{u}(t)-\bar{u}(t'))\big\Vert_{C^{\alpha}}
    =\big\Vert\lim_{t'\to0+}\int_{t'}^t\partial_\tau \bar{u} (\tau)\intd \tau\big\Vert_{C^{\alpha}}
    \\
    &\quad \leq\lim_{t'\to0+}\int_{t'}^t\left\Vert\tau^{\eta-1}\tau^{1-\eta}\partial_\tau \bar{u} (\tau)\right\Vert_{C^\alpha}\intd \tau\leq \left\Vert \bar{u} \right\Vert_{\EE_1}\lim_{t'\to0+}\int_{t'}^t\tau^{\eta-1}\intd \tau
    =\left\Vert \bar{u} \right\Vert_{\EE_1}\frac{t^\eta}{\eta}.\nonumber
\end{align}
For $\partial_s \bar{u}$, we proceed completely analogously.
\end{proof}

In the sequel, we need to estimate terms involving $\beta^{(i)}(\bar{\rho})$. Since we have $\bar{\rho}\in \textup{BUC}([0,\bar{T}]; h_L^{2\eta+\alpha}(\R))$, see Lemma \ref{appbuc1}, there exists $J\subset \R$, a compact interval, such that $\bar{\rho}(t,s)\in J$ for all $(t,s)\in [0,\bar{T}]\times \R$. Since $\beta\in C^\infty(\RR)$, there is a constant $M\in (0,\infty)$ such that
\begin{align}\label{eq:estimatereferencesol}
\begin{split}
    &\sup_{t \in [0,\bar{T}]}\left\Vert \Pi^{-1}(\bar{u}+\phi)\right\Vert\leq M,\quad
    \sup_{J}\big\vert\beta^{(i)}\big\vert\leq M \;\text{ for } i=0, \dots, 4,
    \\
    &\sup_{t \in [0,\bar{T}]} \| \bar{u}\|_{C^{1}},\;  
    \| \bar{u}\|_{\EE_1([0,\bar{T}])},\; 
    \sup_{t \in [0,\bar{T}]}\| \bar{\rho}\|_{C^1}\leq M.
  \end{split}
\end{align}

\begin{lem}
\label{estbetarho}
Let $M>0$ be as in \eqref{eq:estimatereferencesol}. There is $C=C(M)$ such that for all $i\in\{0, 1, 2\}$ we have
\begin{align*}
    \Vert\beta^{(i)}(\rho_0)-\beta^{(i)}(\bar\rho(t))\Vert_{C^{\alpha}}
    \leq
    C(M)\left\Vert\bar\rho\right\Vert_{\EE_1}\frac{t^\eta}{\eta} \quad\text{ for all } t \in[0,\bar{T}].
\end{align*}
\end{lem}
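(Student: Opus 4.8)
The statement to prove is Lemma~\ref{estbetarho}: a Hölder-norm Lipschitz-type estimate for $\beta^{(i)}(\rho_0)-\beta^{(i)}(\bar\rho(t))$ in terms of $t^\eta$. The key observations are that $\beta^{(i)}$ is itself smooth (so all the $\beta^{(i)}$'s and a few of their derivatives are bounded on the compact interval $J$ capturing the range of $\bar\rho$, cf.\ \eqref{eq:estimatereferencesol}), and that by Lemma~\ref{est:v-v0} the ``time displacement'' $\bar\rho(t)-\rho_0$ is already controlled in $C^\alpha$ by $\Vert\bar\rho\Vert_{\EE_1}t^\eta/\eta$. So the whole point is to transfer a $C^\alpha$-bound on $g:=\bar\rho(t)-\rho_0$ through the nonlinear map $x\mapsto \beta^{(i)}(x)$, which is a Nemytskii-type composition estimate.

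\textbf{Main steps.} First I would fix $i\in\{0,1,2\}$ and write, with $F:=\beta^{(i)}\in C^\infty(\R)$,
\[
F(\rho_0(s))-F(\bar\rho(t,s)) = \int_0^1 F'\big(\bar\rho(t,s)+r(\rho_0(s)-\bar\rho(t,s))\big)\intd r \cdot (\rho_0(s)-\bar\rho(t,s)),
\]
using the fundamental theorem of calculus; note $F'=\beta^{(i+1)}$, so $i\le 2$ guarantees $F,F',F''$ are all among the bounded derivatives in \eqref{eq:estimatereferencesol}. For the $C^0$-part of the norm this immediately gives $\Vert F(\rho_0)-F(\bar\rho(t))\Vert_{C^0}\le M\Vert \rho_0-\bar\rho(t)\Vert_{C^0}\le M\Vert\bar\rho\Vert_{\EE_1}t^\eta/\eta$. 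For the Hölder seminorm I would use the standard fact that, for bounded Lipschitz $\Theta$ (here $\Theta:=F$), one has $[\Theta\circ h]_{\alpha}\le \mathrm{Lip}(\Theta)[h]_\alpha$, applied with $h=\bar\rho(t)$ and with $h=\rho_0$; combined with the product rule for Hölder seminorms $[fg]_\alpha\le \Vert f\Vert_{C^0}[g]_\alpha+\Vert g\Vert_{C^0}[f]_\alpha$ applied to the factorization above, and using that $s\mapsto \int_0^1 F'(\dots)\intd r$ has both its $C^0$-norm bounded by $M$ and its Hölder seminorm bounded by $M$ times $[\bar\rho(t)]_\alpha+[\rho_0]_\alpha$ (again via the Lipschitz-composition fact, since $F'$ has bounded derivative $F''$), while $\rho_0,\bar\rho(t)\in h^{1+\alpha}_L\hookrightarrow C^\alpha_L$ have Hölder seminorms bounded by $M$ (for $\bar\rho$ by \eqref{eq:estimatereferencesol}, for $\rho_0$ since it is the trace and $\bar\rho(t)\to\rho_0$). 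Collecting the $C^0$- and seminorm-estimates gives $\Vert F(\rho_0)-F(\bar\rho(t))\Vert_{C^\alpha}\le C(M)\Vert \rho_0-\bar\rho(t)\Vert_{C^\alpha}$, and a final application of Lemma~\ref{est:v-v0} yields the claimed bound $C(M)\Vert\bar\rho\Vert_{\EE_1}t^\eta/\eta$.

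\textbf{Where the work is.} There is no real obstacle here; the lemma is a routine consequence of Lemma~\ref{est:v-v0} plus smoothness of $\beta$. The only mildly technical point is bookkeeping the composition and product estimates in the Hölder seminorm so that the constant $C$ depends only on $M$ (hence only on the model parameters and the reference solution $(\bar u,\bar\rho)$, through \eqref{eq:estimatereferencesol}), and making sure the intermediate values $\bar\rho(t,s)+r(\rho_0(s)-\bar\rho(t,s))$ stay in a fixed compact set on which the relevant $\beta^{(j)}$ are bounded --- this holds because both $\rho_0(s)$ and $\bar\rho(t,s)$ lie in the compact interval $J$ and $J$ is convex (an interval). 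I would phrase the proof in two or three lines invoking "a direct computation using Lemma~\ref{est:v-v0}, \eqref{eq:estimatereferencesol}, the mean value theorem and the product rule for Hölder norms", rather than grinding through the seminorm manipulations explicitly.
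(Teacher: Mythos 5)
Your proposal is correct and follows essentially the same route as the paper: the paper also applies the fundamental theorem of calculus to $\lambda\mapsto\beta^{(i)}(\lambda\rho_0+(1-\lambda)\bar\rho)$, splits the resulting Hölder-quotient into the two terms that correspond precisely to your ``product rule for Hölder seminorms'' applied to $a(s)=\int_0^1\beta^{(i+1)}((1-r)\bar\rho(t,s)+r\rho_0(s))\intd r$ and $b(s)=\rho_0(s)-\bar\rho(t,s)$, and then concludes with Lemma~\ref{est:v-v0} and \eqref{eq:estimatereferencesol}. The only cosmetic difference is that the paper carries out the decomposition explicitly at the level of difference quotients rather than quoting the abstract product estimate.
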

\begin{proof}
For the supremum norm we find 
\begin{align}
\label{estsupnorm}
    \left\Vert \beta(\rho_0)-\beta({\bar\rho})\right\Vert_{C^0}
    &=\sup_{s\in\RR}\left\vert\beta(\rho_0)-\beta({\bar\rho})\right\vert
    \leq\sup_{J}\left\vert\beta^\prime\right\vert\sup_{s\in\RR}\left\vert\rho_0-{\bar\rho}\right\vert.
\end{align}
To estimate the Hölder semi-norm, we note that for $s_1,s_2\in\RR$
\begin{align*}
    &\left\vert\beta(\rho_0(s_1))-\beta(\bar\rho(s_1))-\beta(\rho_0(s_2))+\beta(\bar\rho(s_2))\right\vert\\
    &=\left\vert\int_0^1\partial_\lambda\left[\beta\left(\lambda\rho_0(s_1)+(1-\lambda)\bar\rho(s_1)\right)-\beta\left(\lambda\rho_0(s_2)+(1-\lambda){\bar\rho}(s_2)\right)\right]\intd\lambda\right\vert\\
    &\leq\int_0^1\!\left\vert\beta^\prime\left(\lambda\rho_0(s_1)+(1-\lambda)\bar\rho(s_1)\right)\right\vert\left\vert\rho_0(s_1)-\bar\rho(s_1)-\rho_0(s_2)+\bar\rho(s_2)\right\vert\intd\lambda\\
    &+\int_0^1\!\left\vert\beta^\prime\left(\lambda\rho_0(s_1)+(1-\lambda)\bar\rho(s_1)\right)-\beta^\prime\left(\lambda\rho_0(s_2)+(1-\lambda)\bar\rho(s_2)\right)\right\vert\left\vert\rho_0(s_2)-\bar\rho(s_2)\right\vert\intd\lambda\\
    &\leq\sup_{J}\left\vert\beta^\prime\right\vert\,\left[\rho_0-\bar\rho\right]_{\alpha}\left\vert s_1-s_2\right\vert^{{\alpha}}\\
    &\quad+\sup_{J}\left\vert\beta^{\prime\prime}\right\vert\left(\left\vert\rho_0(s_1)-\rho_0(s_2)\right\vert+\left\vert\bar\rho(s_1)-\bar\rho(s_2)\right\vert\right)\left\vert\rho_0(s_2)-\bar\rho(s_2)\right\vert\\
    &\leq \Big[ \sup_{J}\left\vert\beta^\prime\right\vert\,\left[\rho_0-\bar\rho\right]_{\alpha}+\sup_{J}\left\vert\beta^{\prime\prime}\right\vert\left(\left[\rho_0\right]_\alpha+\left[\bar\rho\right]_\alpha\right)\sup_{ s\in \R}\left\vert\rho_0-\bar\rho\right\vert \Big]\left\vert s_1-s_2\right\vert^{{\alpha}}.
\end{align*}
This estimate together with \eqref{estsupnorm}
and Lemma \ref{est:v-v0} yields the claim for $i=0$. Since $\beta$ is smooth, we can proceed in the same way for $i\geq 1$.
\end{proof}

\begin{lem}
\label{est:DLambda}
Let $M>0$ be as in \eqref{eq:estimatereferencesol}. Then there exists a constant $C$, only depending on $M$, $\eta$, $\alpha$ and the model parameters, such that for all $T\in (0, \bar{T}]$, $(v,\sigma)\in\EE_{1}([0,T])$ and $(v_0, \sigma_0):=(v,\sigma)(0,\cdot)\in\gamma\EE_1$ we have
\begin{align}\label{eq:estlambdau}
    \qquad\left\Vert\textup{D}\Lambda_u(\bar{u},\bar\rho)(v,\sigma)-\textup{D}\Lambda_u(\bar{u},\bar\rho)(v_0,\sigma_0)\right\Vert_{\EE_0}
    &\leq{C}\left(\left\Vert v\right\Vert_{\EE_1}+\left\Vert\sigma\right\Vert_{\EE_1}\right)\max\left\lbrace T,T^\eta\right\rbrace,\\
\label{eq:estlambdarho}
    \left\Vert\textup{D}\Lambda_\rho(\bar{u},\bar\rho)(v,\sigma)-\textup{D}\Lambda_\rho(\bar{u},\bar\rho)(v_0,\sigma_0)\right\Vert_{\EE_0}
    &\leq{C}\left(\left\Vert v\right\Vert_{\EE_1}+\left\Vert\sigma\right\Vert_{\EE_1}\right)\max\left\lbrace T,T^\eta\right\rbrace.
\end{align}
\end{lem}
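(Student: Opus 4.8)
The plan is to exploit the linearity of $\textup{D}\Lambda_u(\bar u,\bar\rho)$ and $\textup{D}\Lambda_\rho(\bar u,\bar\rho)$ in $(v,\sigma)$: with $(v_0,\sigma_0)=(v,\sigma)(0,\cdot)$, the left-hand sides of \eqref{eq:estlambdau}, \eqref{eq:estlambdarho} equal $\textup{D}\Lambda_u(\bar u,\bar\rho)(w,\tau)$ and $\textup{D}\Lambda_\rho(\bar u,\bar\rho)(w,\tau)$ with $(w,\tau):=(v-v_0,\sigma-\sigma_0)$. From the explicit formula \eqref{eq:DLambdau} (and the analogous one for $\textup{D}\Lambda_\rho$) one reads off that both expressions are finite sums of products of (a) scalar or $2\times2$-matrix factors that are constant in $s$ and given by integrals over $[0,L]$ of terms linear in $(w,\tau)$ and in $\partial_s w$, with coefficients built from $\bar u,\bar\rho$ bounded by the constant $M$ from \eqref{eq:estimatereferencesol}, together with $\Pi^{-1}(\bar u+\phi)$ and $\textup{D}\Pi^{-1}(\bar u+\phi)(w)$; and (b), for $\textup{D}\Lambda_u$, the $s$-dependent trigonometric vectors $\bigl(\sin(\bar u+\phi),-\cos(\bar u+\phi)\bigr)$, $\bigl(\cos(\bar u+\phi),\sin(\bar u+\phi)\bigr)$. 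Since $\|\bar u(t)\|_{C^\alpha}\leq C(\bar T)M$ by Lemma \ref{appbuc1}, Lemma \ref{multlith} bounds the $C^\alpha$-norms of the factors in (b) by a constant, while the explicit inverse formula for $2\times2$ matrices (as in the proof of Lemma \ref{lem:unique}) bounds $\|\textup{D}\Pi^{-1}(\bar u+\phi)(w)(t)\|$ by $C(M)\int_0^L|w(t)|\intd s$. Recalling $\|\xi\|_{\EE_0}=\sup_{t\in(0,T]}t^{1-\eta}\|\xi(t)\|_{C^\alpha}$, the proof thus reduces to estimating $\int_0^L|w(t)|\intd s$, $\int_0^L|\tau(t)|\intd s$ and the more delicate nonlocal term $\int_0^L\partial_s w(t)\,g(t,\cdot)\intd s$ (with $g$ one of the bounded coefficients) by powers of $t$, with the weight $t^{1-\eta}$ taken into account.

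For the zeroth-order quantities the fundamental theorem of calculus, exactly as in Lemma \ref{est:v-v0}, gives $\|w(t)\|_{C^\alpha}\leq\frac{t^\eta}{\eta}\|v\|_{\EE_1}$ and $\|\tau(t)\|_{C^\alpha}\leq\frac{t^\eta}{\eta}\|\sigma\|_{\EE_1}$, so these contribute at most $Ct^{1-\eta}t^\eta(\|v\|_{\EE_1}+\|\sigma\|_{\EE_1})=Ct(\|v\|_{\EE_1}+\|\sigma\|_{\EE_1})\leq CT(\|v\|_{\EE_1}+\|\sigma\|_{\EE_1})$. For the term $\int_0^L\partial_s w(t)\,g(t,\cdot)\intd s$ I would integrate by parts, using that $w$ and $g$ are $L$-periodic so that the boundary terms vanish; $\partial_s g$ then consists of a part built only from first-order derivatives of $\bar u,\bar\rho$ (again bounded by $M$), which is absorbed into the previous estimate, plus one term containing $\partial_s^2\bar u$. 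This last term I would estimate by the Cauchy--Schwarz inequality as $C(M)\|w(t)\|_{L^2}\|\partial_s^2\bar u(t)\|_{L^2}$, and here the additional regularity $\partial_s(\bar u,\bar\rho)\in\tilde\EE_1([0,\bar T])$ of the reference function enters decisively: interpolating $\partial_s\bar u(t)$ between its trace space $h^{2\tilde\eta+\alpha}_L(\RR)$, on which it is uniformly bounded by $M$, and $h^{2+\alpha}_L(\RR)$, on which its norm grows at most like $t^{\tilde\eta-1}$, one bounds $\|\partial_s^2\bar u(t)\|_{L^2}$ by $C(M)t^{-\gamma}$ for some $\gamma$ small enough that, together with $\|w(t)\|_{L^2}\leq Ct^\eta\|v\|_{\EE_1}$ and the weight $t^{1-\eta}$, the resulting exponent of $t$ is positive and no larger than the one needed for the right-hand side of \eqref{eq:estlambdau}; a short bookkeeping of exponents confirms this. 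The estimate for $\textup{D}\Lambda_\rho$ is completely analogous and in fact simpler, since it contains a single nonlocal integral and its output is constant in $s$, so the $C^\alpha$-norm collapses to an absolute value.

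Summing the finitely many contributions of this type and collecting constants — all depending only on $M$, $\eta$, $\alpha$ and the model parameters, and never on $(v,\sigma)$ beyond the norms appearing on the right — yields \eqref{eq:estlambdau} and \eqref{eq:estlambdarho}. The only genuinely delicate point, and the step I expect to require the most care, is the nonlocal term involving $\partial_s w=\partial_s v(t)-\partial_s v_0$: here $\partial_s v(t)$ itself carries a $t^{\eta-1}$ singularity as $t\to0$, so it cannot be controlled term by term, and the required smallness must be extracted purely from the difference via integration by parts; the price is that one must control $\partial_s^2\bar u$, which is precisely why the reference function was constructed in Step 1 of the proof of Proposition \ref{prop:existence} with the property $\partial_s(\bar u,\bar\rho)\in\tilde\EE_1([0,\bar T])$.
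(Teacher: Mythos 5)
Your proposal is correct and follows essentially the same route as the paper: reduce by linearity to $(w,\tau)=(v-v_0,\sigma-\sigma_0)$, control the zeroth-order pieces via Lemma \ref{est:v-v0}, and handle the one genuinely delicate contribution --- the $\partial_s w$ in $\textup{D}\mathcal{J}$ --- by integrating by parts, trading $\partial_s w$ for a term with $\partial_s^2\bar{u}$ (the paper encodes this integration by parts directly in formula \eqref{eq:secondD}). The one place you over-engineer is the final step: you do not need Cauchy--Schwarz plus an interpolation of $\partial_s\bar{u}$ between $h^{2\tilde\eta+\alpha}_L$ and $h^{2+\alpha}_L$, and the extra regularity $\partial_s(\bar{u},\bar{\rho})\in\tilde\EE_1$ is in fact \emph{not} used here --- the plain $\EE_1$-bound $\|\partial_s^2\bar{u}(t)\|_{C^\alpha}\leq t^{\eta-1}\|\bar{u}\|_{\EE_1}$ combined with $\|w(t)\|_{C^0}\leq \tfrac{t^\eta}{\eta}\|v\|_{\EE_1}$ and the weight $t^{1-\eta}$ already gives the factor $t^\eta\leq T^\eta$ directly, which is exactly what the paper does via the estimate \eqref{eq:thirdD}. (The $\tilde\EE_1$-regularity is needed later, for the $T^{\tilde\eta}$ contributions to $\mathbf{S}$ in Proposition \ref{prop:sollinprobt}, not for Lemma \ref{est:DLambda} itself.)
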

Here $\textup{D}\Lambda_u$ is the Fréchet derivative of the map defined in \eqref{eq:Lambdau mapping}, thus may be applied to $(v_0, \sigma_0)$, identified with an element of $\mathbb{F}([0,T])$, cf.\ \eqref{eq:E1/2}. Similarly for $\Lambda_\rho$.

\begin{proof}[Proof of Lemma \ref{est:DLambda}]
In the following, $C$ denotes a constant which varies from line to line but only depends on $M$ and the model parameters.
First, we prove \eqref{eq:estlambdau} considering each term in  \eqref{eq:DLambdau} separately.
By \eqref{eq:def Pi} with $\theta=\bar{u}+\phi$, we find 
\begin{align}
\label{eq:DPi}
\qquad
    \textup{D}\Pi\left(\bar{u}+\phi\right)(v)=\begin{pmatrix}
    \int_0^L\sin\left(2\left(\bar{u}+\phi\right)\right)v\intd s 
    & - \int_0^L\cos\left(2\left(\bar{u}+\phi\right)\right)v\intd s \\
    - \int_0^L\cos\left(2\left(\bar{u}+\phi\right)\right)v\intd s
    & -\int_0^L\sin\left(2\left(\bar{u}+\phi\right)\right)v\intd s
    \end{pmatrix}
\end{align}
and obtain using \eqref{eq:estimatereferencesol} and \eqref{eq:DPi} (considering the operator norm of the matrix)
\begin{align*}
    &\left\Vert\textup{D}\Pi^{-1}\left(\bar{u}+\phi\right)(v)-\textup{D}\Pi^{-1}\left(\bar{u}+\phi\right)(v_0)\right\Vert\\
    &\qquad= \left\Vert-\Pi^{-1}\left(\bar{u}+\phi\right)\big(\textup{D}\Pi\left(\bar{u}+\phi\right)(v)-\textup{D}\Pi\left(\bar{u}+\phi\right)(v_0)\big)\,\Pi^{-1}\left(\bar{u}+\phi\right)\right\Vert\\
    &\qquad
    \leq M^2 \sqrt{2}L\sup_{s\in[0,L]}\left\vert v-v_0 \right\vert
    \leq C\left\Vert v-v_0 \right\Vert_{C^\alpha}.
\end{align*}
Hence, with Lemma  \ref{est:v-v0} the difference coming from the first term on the right hand side of \eqref{eq:DLambdau} can be estimated by
\begin{align*}
    &\Big\Vert
    \big(\textup{D}\Pi^{-1}\left(\bar{u}+\phi\right)(v)-\textup{D}\Pi^{-1}\left(\bar{u}+\phi\right)(v_0)\big)
    \,\mathcal{J}(\bar{u}, \bar\rho)
    \cdot
    \begin{pmatrix}
    \sin\left(\bar{u}+\phi\right) \\ -\cos\left(\bar{u}+\phi\right)
    \end{pmatrix}
    \Big\Vert_{\EE_0}\\
    &\qquad \leq C\sup_{t\in(0,T]}t^{1-\eta}
    \left\Vert v-v_0 \right\Vert_{C^\alpha}
    \leq C\left\Vert v \right\Vert_{\EE_1}T.
\end{align*}

For the second term of \eqref{eq:DLambdau} we integrate once by parts and obtain
\allowdisplaybreaks{\begin{align}
\label{eq:secondD}
    \textup{D}\mathcal{J}(\bar{u}, \bar{\rho})(v,\sigma) \nonumber&=
\int_0^L\begin{pmatrix}
-\sin\left(\bar{u}+\phi\right) \\ \cos\left(\bar{u}+\phi\right)
\end{pmatrix}v
\left(\partial_s\bar{u}+\partial_s\phi\right)\beta(\bar{\rho})\left(\partial_s\bar{u}+\partial_s\phi-c_0\right)\intd s \nonumber\\
    &\qquad+\int_0^L\begin{pmatrix}
\cos\left(\bar{u}+\phi\right) \\ \sin\left(\bar{u}+\phi\right)
\end{pmatrix}
\left(\partial_s\bar{u}+\partial_s\phi\right)\beta^\prime(\bar{\rho})\sigma\left(\partial_s\bar{u}+\partial_s\phi-c_0\right)\intd s \nonumber\\
    &\qquad-\int_0^L\begin{pmatrix}
-\sin\left(\bar{u}+\phi\right) \\ \cos\left(\bar{u}+\phi\right)
\end{pmatrix}
\left(\partial_s\bar{u}+\partial_s\phi\right)\left(2\partial_s\bar{u}+2\partial_s\phi-c_0\right)\beta(\bar{\rho})\,v\intd s \nonumber\\
    &\qquad-\int_0^L \begin{pmatrix}
\cos\left(\bar{u}+\phi\right) \\ \sin\left(\bar{u}+\phi\right)
\end{pmatrix} 2\partial_s^2\bar{u}\,\beta(\bar{\rho})\,v\intd s \nonumber\\
    &\qquad-\int_0^L \begin{pmatrix}
\cos\left(\bar{u}+\phi\right) \\ \sin\left(\bar{u}+\phi\right)
\end{pmatrix} \left(2\partial_s\bar{u}+2\partial_s\phi-c_0\right)\beta'(\bar{\rho})\partial_s\bar{\rho} \,v\intd s.
\end{align}
}So the difference we need to consider can be estimated by
\begin{align}
\label{eq:thirdD}
    &\big\Vert \textup{D}\mathcal{J}(\bar{u}, \bar{\rho})(v_0,\sigma_0) -\textup{D}\mathcal{J}(\bar{u}, \bar{\rho})(v,\sigma)\big\Vert_{C^0} \nonumber\\
    &\quad\leq 
     C\left(1+\left\Vert\partial_s^2\bar{u}\right\Vert_{C^\alpha}\right) \left\Vert v-v_0\right\Vert_{C^\alpha} + C\left\Vert \sigma-\sigma_0\right\Vert_{C^\alpha} \nonumber\\
    & \quad\leq C t^{\eta} \left(1+\left\Vert\partial_s^2\bar{u}\right\Vert_{C^\alpha}\right)\left(\left\Vert v\right\Vert_{\EE_1}+\left\Vert\sigma\right\Vert_{\EE_1}\right).
\end{align}
In the last step we used Lemma \ref{est:v-v0}.
Using \eqref{eq:estimatereferencesol}, we obtain 
\begin{align*}
    &\Big\Vert
    \Pi^{-1}\left(\bar{u}+\phi\right) \Big(\textup{D}\mathcal{J}(\bar{u}, \bar{\rho})(v,\sigma) - \textup{D}\mathcal{J}(\bar{u}, \bar{\rho})(v_0,\sigma_0)\Big)
\cdot\begin{pmatrix}
\sin\left(\bar{u}+\phi\right) \\ -\cos\left(\bar{u}+\phi\right)
\end{pmatrix}\Big\Vert_{\EE_0}\\
&\leq C \sup_{t\in(0,T]}t^{1-\eta} \; t^{\eta} (1+\left\Vert\partial_s^2\bar{u}\right\Vert_{C^\alpha})\left(\left\Vert v\right\Vert_{\EE_1}+\left\Vert\sigma\right\Vert_{\EE_1}\right) \leq C \left(\left\Vert v\right\Vert_{\EE_1}+\left\Vert\sigma\right\Vert_{\EE_1}\right) (T+T^\eta).
\end{align*}
Finally, we look at the difference coming from the third term of \eqref{eq:DLambdau}. We have
\begin{align*}
    \left\Vert\begin{pmatrix}
        \cos\left(\bar{u}+\phi\right) \\ \sin\left(\bar{u}+\phi\right)
        \end{pmatrix}
        (v-v_0)\right\Vert_{C^{\alpha}} 
        \leq C \left\Vert v-v_0\right\Vert_{C^\alpha}.
\end{align*}
Thereby we obtain using \eqref{eq:estimatereferencesol} and Lemma \ref{est:v-v0}
\begin{align*}
    &\bigg\Vert\Pi^{-1}\left(\bar{u}+\phi\right)\,\mathcal{J}(\bar{u}, \bar{\rho})
\cdot\begin{pmatrix}
        \cos\left(\bar{u}+\phi\right) \\ \sin\left(\bar{u}+\phi\right)
        \end{pmatrix}(v-v_0)\bigg\Vert_{\EE_0}\\
        &\qquad\leq C\sup_{t\in(0,T]}t^{1-\eta}\left\Vert v-v_0\right\Vert_{C^\alpha}\leq C\left\Vert v\right\Vert_{\EE_1}T.
\end{align*}
This proves the first part of the claim.
The second part follows similarly. 
\end{proof}

\subsection{Solving the linearized problem}
\label{sec:solofstep1}
We show that the map in \eqref{eq:Frechet2} is an isomorphism, now considering
a linear initial value problem with time-dependent coefficients.

\begin{prop}
\label{prop:sollinprobt}
Let $(u_0,\rho_0)\in\gamma\EE_1$ such that $u_0$ satisfies \eqref{eq:hypu} and let $(\bar{u},\bar{\rho})\in \EE_1([0,\bar{T}])$ be the function constructed in the proof of Proposition \ref{prop:existence}.
Then there exists $T^\prime\in(0,\bar{T}]$ such that the mapping 
\begin{align*}
    \tilde{\oJ}\colon \EE_1([0,T^\prime])\to\EE_0([0,T^\prime])\times\gamma\EE_1, \; (v,\sigma)\mapsto
    \big(
    \partial_t(v,\sigma)-\textup{D}\oF(\bar{u},\bar{\rho})(v,\sigma),\,
    (v,\sigma)(0,\cdot)
    \big)
\end{align*}
is an isomorphism. Equivalently, for any right-hand side $(\varphi_1,\varphi_2)\in\EE_0([0,T'])$ and any initial datum $(v_0,\sigma_0)\in\gamma\EE_1$ there is a unique solution $(v,\sigma)\in\EE_1([0,T^\prime])$ of
\begin{align}
\label{eq:evprob lin}
    \begin{cases}
    \partial_t(v,\sigma)-\textup{D}\oF(\bar{u},\bar{\rho})(v,\sigma)=(\varphi_1,\varphi_2) & \mbox{ in }(0,T') \times \R,\\
    (v,\sigma)(0,\cdot)=(v_0,\sigma_0)  & \mbox{ on } \R.
    \end{cases}
\end{align}
\end{prop}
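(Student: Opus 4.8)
The plan is to realize $\tilde{\oJ}$ as a small perturbation of the isomorphism $\oJ$ from Proposition \ref{sollinprob} and close by a Neumann series, using crucially that by Lemma \ref{normabsch} the norm of $\oJ^{-1}$ is bounded by a constant independent of the length of the time interval. First I would observe that $\tilde{\oJ}$ and $\oJ$ have the same (trace) second component, so that
\begin{align*}
(\tilde{\oJ}-\oJ)(v,\sigma)=\big(\sA(v,\sigma)+\Psi(v_0,\sigma_0)-\textup{D}\oF(\bar{u},\bar{\rho})(v,\sigma),\,0\big),\qquad (v_0,\sigma_0):=(v,\sigma)(0,\cdot);
\end{align*}
in particular $\tilde{\oJ}$ is well-defined and bounded linear, since $\oJ$ is and since the difference maps into $\EE_0([0,T'])\times\{0\}$ by Remark \ref{rem:appbuc2} and the $C^1$-properties of $\textup{D}\Lambda_u,\textup{D}\Lambda_\rho$ recorded above.

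Comparing \eqref{DF1}--\eqref{DF2} with the definitions of $\sA$ and $\Psi$, every $\partial_s^2\sigma$-term cancels, $\partial_s^2 v$ survives only against the coefficient difference $\beta(\rho_0)-\beta(\bar{\rho})$, and the remaining terms of $\tilde{\oJ}-\oJ$ are of three types:
\begin{enumerate}[(a)]
\item $\big(\beta^{(i)}(\rho_0)\,P(\partial_su_0,\partial_s\rho_0)-\beta^{(i)}(\bar\rho)\,P(\partial_s\bar{u},\partial_s\bar{\rho})\big)\,q$, with $i\in\{0,1,2\}$, $P$ a monomial of degree at most $2$ (up to the fixed shift $\partial_s\phi-c_0$), and $q\in\{\partial_s^2 v,\partial_s v,\partial_s\sigma,\sigma\}$;
\item $\beta'(\bar\rho)\,\partial_s^2\bar{u}\,(\sigma_0-\sigma)$;
\item $\textup{D}\Lambda_u(\bar{u},\bar{\rho})(v_0,\sigma_0)-\textup{D}\Lambda_u(\bar{u},\bar{\rho})(v,\sigma)$ and the corresponding expression with $\Lambda_\rho$.
\end{enumerate}
Then I would bound the $\EE_0([0,T'])$-norm of each. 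For (a): Lemma \ref{estbetarho} and Lemma \ref{est:v-v0} show that the bracketed coefficient difference is $\le C\max\{t^\eta,t^{\tilde\eta}\}$ in $C^\alpha$, where $\tilde\eta=\eta-\tfrac12>0$ precisely because $\eta>\tfrac12$; meanwhile $t^{1-\eta}\|\partial_s^2 v(t)\|_{C^\alpha}\le\|v\|_{\EE_1}$ and $\|q(t)\|_{C^\alpha}\le C\,t^{\eta-1}\|(v,\sigma)\|_{\EE_1}$ for $q\in\{\partial_s v,\partial_s\sigma,\sigma\}$, using Lemma \ref{multlith} for the products and Lemma \ref{appbuc1} for $\|\sigma(t)\|_{C^\alpha}$. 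Multiplying by the weight $t^{1-\eta}$ and taking the supremum over $(0,T']$ yields a bound $C\,T'^{\tilde\eta}\|(v,\sigma)\|_{\EE_1}$ for $T'\le1$. For (b), the integral representation \eqref{eq:martin1} gives $\|\sigma(t)-\sigma_0\|_{C^\alpha}\le\eta^{-1}t^\eta\|\sigma\|_{\EE_1}$, and together with $t^{1-\eta}\|\partial_s^2\bar{u}(t)\|_{C^\alpha}\le\|\bar{u}\|_{\EE_1}$ and $\|\beta'(\bar\rho)\|_{C^\alpha}\le M$ from \eqref{eq:estimatereferencesol} this gives $\le C\,T'^\eta\|\sigma\|_{\EE_1}$. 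For (c), Lemma \ref{est:DLambda} gives directly $\le C\max\{T',T'^\eta\}(\|v\|_{\EE_1}+\|\sigma\|_{\EE_1})$. Summing up, there is $C>0$ independent of $T'$ with
\begin{align*}
\|\tilde{\oJ}-\oJ\|_{\mathcal{L}(\EE_1([0,T']),\,\EE_0([0,T'])\times\gamma\EE_1)}\le C\,T'^{\tilde\eta}\qquad\text{for all }T'\in(0,\min\{1,\bar{T}\}].
\end{align*}

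Finally, by Proposition \ref{sollinprob} the operator $\oJ$ on $[0,T']$ is an isomorphism, and by Lemma \ref{normabsch} there is $C_0=C_0(u_0,\rho_0,\bar{T})$ with $\|\oJ^{-1}\|\le C_0$ uniformly in $T'\in(0,\bar{T}]$. Choosing $T'\in(0,\min\{1,\bar{T}\}]$ so small that $C_0\,C\,T'^{\tilde\eta}<1$, the operator $\textup{Id}+\oJ^{-1}(\tilde{\oJ}-\oJ)$ on $\EE_1([0,T'])$ is boundedly invertible via the Neumann series, hence $\tilde{\oJ}=\oJ\big(\textup{Id}+\oJ^{-1}(\tilde{\oJ}-\oJ)\big)$ is an isomorphism onto $\EE_0([0,T'])\times\gamma\EE_1$, which is the assertion. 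The main obstacle is the bookkeeping in (a)--(c): one must verify that, in spite of the time weights, every difference term genuinely carries a strictly positive power of $T'$. The two structurally decisive points are that the leading coefficient $\beta(\bar\rho)$ of $\partial_s^2 v$ enters only through the $C^\alpha$-small difference $\beta(\rho_0)-\beta(\bar\rho(t))$ (Lemma \ref{estbetarho}), and that the genuinely irregular term $\partial_s^2\bar{u}$ — which for exactly this reason was moved into $\Psi$ rather than frozen at $t=0$ — occurs in $\tilde{\oJ}-\oJ$ only multiplied by $\sigma_0-\sigma(t)\to0$; the hypothesis $\eta>\tfrac12$ is used precisely to make the first-order coefficient differences decay at the positive rate $t^{\tilde\eta}$.
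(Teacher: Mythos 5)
Your proof is correct and takes essentially the same route as the paper: both write $\tilde{\oJ}$ as a perturbation of $\oJ$ by the operator $\textup{\textbf{S}}(v,\sigma)=\sA(v,\sigma)+\Psi(v_0,\sigma_0)-\textup{D}\oF(\bar{u},\bar\rho)(v,\sigma)$, invoke Lemma~\ref{normabsch} for the $T$-uniform bound on $\|\oJ^{-1}\|$, estimate the perturbation term-by-term via Lemmas~\ref{multlith}, \ref{est:v-v0}, \ref{estbetarho}, \ref{est:DLambda} to obtain a factor $\max\{T,T^\eta,T^{\tilde\eta}\}$, and close by a Neumann series. The only cosmetic differences are your regrouping of the summands into the three classes (a)--(c) and your restriction to $T'\le 1$ so that the bound collapses to a single power $T'^{\tilde\eta}$; the structural observations you single out (the leading coefficient entering only through $\beta(\rho_0)-\beta(\bar\rho)$, and $\partial_s^2\bar u$ appearing only against $\sigma_0-\sigma$ thanks to the choice of $\Psi$) are exactly the points the paper's proof is built around.
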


\begin{proof}
Let $T\in (0, \bar{T}]$ to be chosen.
We compare $\tilde{\oJ}  $ with $\oJ $ from Proposition \ref{sollinprob}, which we already know is an isomorphism. We have
\begin{align*}
    \tilde{\oJ}  (v,\sigma)
    &=\oJ(v,\sigma)
    +\big(
   \sA(v,\sigma)+\Psi(v_0, \sigma_0)-\textup{D}\oF(\bar{u},\bar{\rho})(v,\sigma),\,
    (0,0)
    \big)\\
    &=\oJ\Big(\textup{\textbf{I}}(v,\sigma)+\oJ^{-1} (v,\sigma)\,
    \big(
    \textup{\textbf{S}}(v,\sigma),\, (0,0)
    \big)\Big),
\end{align*}
where $\sA ,\Psi$ are given as in Proposition \ref{sollinprob}, $\textup{\textbf{I}}$ is the operator identity and 
\begin{align}
  &\textbf{S}:=(\textbf{S}_1, \textbf{S}_2)\colon \EE_1([0,T]) \to \EE_0([0,T]),\\
  &\textbf{S}(v,\sigma)= \sA(v,\sigma)+\Psi(v_0, \sigma_0)-\textup{D}\oF(\bar{u},\bar{\rho})(v,\sigma).  
\end{align}

Thus, by a Neumann series argument, it is sufficient to show that the operator norm of $\textbf{S}$  
becomes arbitrarily small for $T$ getting small and at the same time the operator norm $\left\Vert\oJ^{-1}   \right\Vert$ remains bounded, independent of $T\leq \bar{T}$. This last part has already been proven in Lemma \ref{normabsch}. Hence, the statement follows if we show
\begin{align}\label{eq:pfannkuchen}
    \left\Vert \textup{\textbf{S}}\right\Vert\leq C \max\left\lbrace T, T^\eta,T^{\tilde\eta}\right\rbrace.
\end{align}

By definition of  $\sA ,\Psi$ and using  
\eqref{DF1}, \eqref{DF2}
we find
\begin{align}
\label{eq:S1}
    \textup{\textbf{S}}_1(v,\sigma)
    &=\left(\beta(\rho_0)-\beta(\bar\rho)\right)\partial_s^2v 
    +\beta^\prime(\bar\rho)\,\partial_s^2\bar{u}\left(\sigma_0-\sigma\right) 
    +\left(\beta^\prime(\rho_0)\partial_s\rho_0-\beta^\prime(\bar\rho)\partial_s\bar\rho\right)\partial_sv \nonumber\\
    &\quad
    +\left(\beta'(\rho_0)\left(\partial_su_0+\partial_s\phi-c_0\right)-\beta'(\bar\rho)\left(\partial_s\bar{u}+\partial_s\phi-c_0\right)\right)\partial_s\sigma \nonumber\\
    &\quad
    +\left(\beta''(\rho_0)\partial_s\rho_0\left(\partial_su_0+\partial_s\phi-c_0\right)-\beta''(\bar\rho)\partial_s\bar\rho\left(\partial_s\bar{u}+\partial_s\phi-c_0\right)\right)\sigma \nonumber\\
    &\quad
    +\textup{D}\Lambda_u(\bar{u},\bar\rho)(v_0,\sigma_0)-\textup{D}\Lambda_u(\bar{u},\bar\rho)(v,\sigma)
\end{align}
and
\begin{align}
\label{eq:S2}
    \textup{\textbf{S}}_2(v,\sigma)
    &=\Big(-\frac{1}{2}\beta^{\prime\prime}(\rho_0)\left(\partial_su_0+\partial_s\phi-c_0\right)^2+\frac{1}{2}\beta^{\prime\prime}(\bar\rho)\left(\partial_s\bar{u}+\partial_s\phi-c_0\right)^2\Big)\sigma \nonumber\\
    &\quad
    +\left(-\beta^\prime(\rho_0)\left(\partial_su_0+\partial_s\phi-c_0\right) +\beta^\prime(\bar\rho)\left(\partial_s\bar{u}+\partial_s\phi-c_0\right)\right)\partial_s v \nonumber\\
    &\quad
    +\textup{D}\Lambda_\rho(\bar{u},\bar\rho)(v_0,\sigma_0)-\textup{D}\Lambda_\rho(\bar{u},\bar\rho)(v,\sigma).
\end{align}
We now take a look at the terms of \eqref{eq:S1} and \eqref{eq:S2}.
In the following, $C$ denotes a constant, only depending on the model parameters, $\alpha$, $\eta$ and $M$ as in \eqref{eq:estimatereferencesol}, which is allowed to change from line to line.
The $\EE_0$-norm of the first term of \eqref{eq:S1} can be estimated using Lemma \ref{multlith} and Lemma \ref{estbetarho} by
\begin{align*}
    \left\Vert \left(\beta(\rho_0)-\beta(\bar\rho)\right)\partial_s^2v \right\Vert_{\EE_0}
    &\leq C \sup_{t\in(0,T]}t^{1-\eta}\left\Vert\beta(\rho_0)-\beta(\bar\rho)\right\Vert_{C^\alpha}\left\Vert v\right\Vert_{C^{2+\alpha}}
    \leq C\left\Vert v\right\Vert_{\EE_1}T^\eta.
\end{align*}
Here we used the time-weight of the $\EE_0$-norm to bound the second derivative of $v$. For the second term of \eqref{eq:S1} we have to use the time-weight to bound the second derivative of $\bar{u}$. An additional factor depending on $T$ and going to zero for $T$ going to zero is given by Lemma \ref{est:v-v0}. This motivates the choice of the first term in $\Psi$ in Proposition \ref{sollinprob}. We obtain
\begin{align*}
    \left\Vert\beta^\prime(\bar\rho)\,\partial_s^2\bar{u}\left(\sigma_0-\sigma\right)\right\Vert_{\EE_0}
    &\leq C\sup_{t\in(0,T]}t^{1-\eta}\left\Vert\beta^\prime(\bar\rho)\right\Vert_{C^\alpha}\left\Vert \bar{u}\right\Vert_{C^{2+\alpha}}\left\Vert\sigma_0-\sigma\right\Vert_{C^\alpha}\\
    &\leq C\left\Vert\sigma\right\Vert_{\EE_1}T^\eta.
\end{align*}
The third term of \eqref{eq:S1} can be estimated using Lemma \ref{estbetarho} and Lemma \ref{est:v-v0} by
\begin{align*}
&\left\Vert \left(\beta^\prime(\rho_0)\partial_s\rho_0-\beta^\prime(\bar\rho)\partial_s\bar\rho\right)\partial_sv\right\Vert_{\EE_0}\\
&\leq C\sup_{t\in(0,T]}t^{1-\eta}\left\Vert\beta'(\rho_0)-\beta'(\bar\rho)\right\Vert_{C^\alpha}\left\Vert\rho_0\right\Vert_{C^{1+\alpha}}\left\Vert v\right\Vert_{C^{1+\alpha}}\\
&\quad+C\sup_{t\in(0,T]}t^{1-\eta}\left\Vert\beta'(\bar\rho)\right\Vert_{C^\alpha}\left\Vert\partial_s\rho_0-\partial_s\bar\rho\right\Vert_{C^{\alpha}}\left\Vert v\right\Vert_{C^{1+\alpha}}\leq C\left\Vert v\right\Vert_{\EE_1}T^\eta+C\left\Vert v\right\Vert_{\EE_1}T^{\tilde\eta}.
\end{align*}
The same also applies for the fourth term of \eqref{eq:S1} and the second term of \eqref{eq:S2}.
For the fifth term of \eqref{eq:S1} we proceed similarly and estimate
\begin{align*}
    &\left\Vert\left(\beta''(\rho_0)\partial_s\rho_0\left(\partial_su_0+\partial_s\phi-c_0\right)-\beta''(\bar\rho)\partial_s\bar\rho\left(\partial_s\bar{u}+\partial_s\phi-c_0\right)\right)\sigma\right\Vert_{\EE_0}\\
    &\leq C\!\sup_{t\in(0,T]}t^{1-\eta}\left\Vert\beta''(\rho_0)-\beta''(\bar\rho)\right\Vert_{C^\alpha}\left\Vert\rho_0\right\Vert_{C^{1+\alpha}}\left\Vert\partial_su_0+\partial_s\phi-c_0\right\Vert_{C^\alpha}\left\Vert\sigma\right\Vert_{C^\alpha}\\
    &\quad+C\!\sup_{t\in(0,T]}t^{1-\eta}\left\Vert\beta''(\bar\rho)\right\Vert_{C^\alpha}\left\Vert\partial_s\rho_0-\partial_s\bar\rho\right\Vert_{C^\alpha}\left\Vert\partial_su_0+\partial_s\phi-c_0\right\Vert_{C^\alpha}\left\Vert\sigma\right\Vert_{C^\alpha}\\
    &\quad+C\!\sup_{t\in(0,T]}t^{1-\eta}\left\Vert\beta''(\bar\rho)\right\Vert_{C^\alpha}\left\Vert\bar\rho\right\Vert_{C^{1+\alpha}}\left\Vert\partial_su_0-\partial_s\bar{u}\right\Vert_{C^\alpha}\left\Vert\sigma\right\Vert_{C^\alpha}\leq C\left\Vert\sigma\right\Vert_{\EE_1}\!\left(T^\eta+T^{\tilde\eta}\right).
\end{align*}
Analogously, we can treat the first term of \eqref{eq:S2}. 
Together with Lemma \ref{est:DLambda}, \eqref{eq:pfannkuchen} and thus the claim follows for $T'=T<\bar{T}$ sufficiently small.
\end{proof}

\section*{Acknowledgements} The authors acknowledge support by the DFG (German Research Foundation), project no.\ 404870139. The third author is additionally supported by the Austrian Science Fund (FWF) project/grant P 32788-N.

\bibliographystyle{abbrv}
\bibliography{biblio}

\end{document}